\newtheorem*{theorem*}{Theorem}
\newtheorem{theorem}{Theorem}[section]
\newtheorem{corollary}[theorem]{Corollary}
\newtheorem{lemma}[theorem]{Lemma}
\newtheorem{definition}[theorem]{Definition}
\newtheorem{remark}[theorem]{Remark}
\newtheorem*{remark*}{Remark}
\newtheorem{proposition}[theorem]{Proposition}
\renewcommand{\epsilon}{\varepsilon}
\title{Non Null-Controllability Properties of the Grushin-Like Heat Equation on 2D-Manifolds}
\author{Roman Vanlaere \footnote{CEREMADE, Université Paris-Dauphine PSL, CNRS UMR 7534, 75016 Paris, France, roman.vanlaere@dauphine.psl.eu}}
\date{December, 2025}
\begin{document}

\maketitle

\begin{abstract}
     We study the internal non null-controllability properties of the heat equation on 2-dimensional almost-Riemannian manifolds with an interior singularity, and under the assumption that the closure of the control zone does not contain the whole singularity. We show that if locally, around the singularity, the sub-Riemannian metric can be written in a Grushin form, or equivalently the sub-Laplacian writes as a generalized Grushin operator, then, achieving null-controllability requires at least a minimal amount of time. As locally the manifold looks like a rectangular domain, we consequently focus ourselves on the non null-controllability properties of the generalized Grushin-like heat equation on various Euclidean domains.  
\end{abstract}

\tableofcontents

\section{Introduction}

Over the past decade, there has been significant attention devoted to exploring the controllability properties of evolution equations associated with degenerate elliptic operators. These are parabolic operators whose symbol can vanish. For the heat equation, this interest has been spurred by positive answers to the question of controllability of the heat equation associated to the Laplace-Beltrami operator on Riemannian manifolds (see \textit{e.g.} \cite{Fattorini1971-sq, fursikov1996controllability, Lebeau1995-jy, miller2005unique}). A pioneer work for the sub-elliptic heat equation, more precisely the Grushin equation, is very recent (see \textit{e.g.} \cite{beauchard2014null}), and shows that in this case, at least a minimum amount of time is required. As for the sub-elliptic wave equation on compact manifold, it has been very recently shown in \cite{letrouit2023subelliptic} that it is never null-controllable. However, this question for the sub-elliptic heat in a general context still lacks of answers. \\

In the present paper, we therefore study the null-controllability properties of the heat equation
\begin{align*}
    (\partial_t - \Delta)f = \mathbf{1}_\omega u,
\end{align*}
where $\Delta$ denotes a sub-Laplacian on a two-dimensional almost-Riemannian manifold, under the assumption that, locally, $\Delta$ can be written as a generalized Grushin-type operator.

\subsection{The sub-Riemannian setting}\label{section setting}

We set here the geometric settings of the present paper. For details on sub-Riemannian geometry, see for instance \cite{Jean2014-dz, Agrachev2019-nm}.\\

Let $\mathcal{M}$ be an orientable two-dimensional manifold, with or without boundary, and let $\mathcal{Z} \subset \mathcal{M}$ be a connected embedded one-dimensional submanifold. We will assume throughout this work that $\mathcal{M}$ and $\mathcal{Z}$ satisfy the following. 

\begin{enumerate}[label=\textbf{H\arabic*},ref=\text{H\arabic*}, start = 0]
    \item \label{H0} Let $\gamma \in  \mathbb{N}^*$. $\mathcal{M}$ is a complete almost-Riemannian manifold (see \cite[Definition 9.1]{Agrachev2019-nm}), of step  $1$ on $\mathcal{M} \setminus \mathcal{Z}$, and $\gamma + 1$ on $\mathcal{Z}$ (see \cite[Definition 3.1]{Agrachev2019-nm}). In particular, $\mathcal{Z}$ is non-characteristic (in the sense of \cite[Definition 2.3]{Franceschi2020-gs}). The injectivity radius from $\mathcal{Z}$, that we denote by $\text{inj}(\mathcal{Z})$, is bounded below by some $C > 0$, and $\partial \mathcal{Z} \subset \partial\mathcal{M}$. 
\end{enumerate}

The condition on the step of the structure implies the Hörmander condition, also known as bracket-generating condition. Namely, the sub-Riemannian structure on $\mathcal{M}$ is locally generated by a family of smooth vector fields such that, at each point, their iterated Lie brackets span the tangent space to $\mathcal{M}$. In our case, we need $\gamma$ iterations of Lie brackets on $\mathcal{Z}$, while none are needed on $\mathcal{M} \setminus \mathcal{Z}$. Given such a family $\mathcal{D}$ of vector fields, we can consider a metric $G$ which is defined to make $\mathcal{D}$ an orthonormal frame on $\mathcal{M} \setminus \mathcal{Z}$. The metric $G$ is Riemannian almost everywhere (except on $\mathcal{Z}$). Thanks to hypothesis \ref{H0}, Chow-Raschevskii theorem applies, and the notion of (sub-Riemannian) distance associated to the metric $G$ is well-defined on $\mathcal{M}$. We denote it by $d_{\operatorname{sR}}$, and $(M,d_{\operatorname{sR}})$ is a metric space. We also write, for $p \in \mathcal{M}$, 
\begin{align*}
    \delta(p) = \inf\{d_{\operatorname{sR}}(p,q), q \in \mathcal{Z}\}.
\end{align*}

Observe that due to \ref{H0}, any open connected neighborhood $\mathcal{O}$ of $\mathcal{Z}$ must satisfy $\mathcal{O} \setminus \mathcal{Z} = \mathcal{O}^- \sqcup \mathcal{O}^+$. We make the following complementary assumption to \ref{H0}.

\begin{enumerate}[label=\textbf{H0'},ref=\text{H0'}]
    \item \label{H0'} There exists $R \in (0,\operatorname{inj}(\mathcal{Z})]$ such that, letting $\mathcal{O} = \{p \in \mathcal{M},\delta(p) < R\}$, $\delta$ is smooth in $\mathcal{O}^{\pm} \cup \mathcal{Z}$. 
\end{enumerate}

\begin{remark}
The assumption on the injectivity radius in \ref{H0} is automatically verified whenever $\mathcal{Z}$ is compact and without boundary, and $\mathcal{Z}$ is always compact when $\mathcal{M}$ is compact, since $\mathcal Z$ is a closed embedded smooth submanifold of $\mathcal{M}$. The smoothness of $\delta$ in \ref{H0'} is automatically verified if $\partial\mathcal{M} = \emptyset$ (note that in this case, $\partial \mathcal{Z} = \emptyset)$. In the case with boundary, assumption \ref{H0'} is sufficient to ensure that our arguments work under \ref{H1ii}, but can be relaxed under \ref{H1i} (see the assumptions below and Lemma \ref{lemma: bndry tub neighb}). Moreover, the assumption $\partial \mathcal{Z} \subset \partial\mathcal{M}$ is to ensure the right geometrical setting under which we can work in the case $\gamma > 1$.
\end{remark}

The assumption \ref{H0} ensures the existence of a double-sided tubular neighborhood around $\mathcal{Z}$, that we denote by $\mathcal{Z}_L \subset \{ p \in \mathcal{M}, \delta(p) < L\}$, for some $0 < L < \operatorname{inj}(\mathcal{Z})$, and $\mathcal{Z}_L \simeq (-L,L) \times \mathcal{Z}$. Thanks to the smoothness assumption on $\delta$, we actually have that $\mathcal{Z}_L = \{ p \in \mathcal{M}, \delta(p) < L\}$ for $L \in (0,R)$ (see Lemma \ref{lemma: bndry tub neighb}). Whenever $\mathcal{Z}$ is compact and without boundary, such a neighborhood always exists under the sole assumption that $\mathcal{Z}$ is non-characteristic, and the set equality $\mathcal{Z}_L = \{ p \in \mathcal{M}, \delta(p) < L\}$ is verified (see e.g. \cite[Proposition 3.1]{Franceschi2020-gs}). Whenever $\mathcal{Z}$ is non-compact or has nonempty boundary, its existence is ensured by the assumption on the injectivity radius (see e.g.  \cite[Theorem 3.7]{rossi2022relative} combined with \cite[Proposition 3.1]{Franceschi2020-gs}). \\

We now introduce $\omega$ to be an open set of $\mathcal{M}$ that satisfies one of the following. 

\begin{enumerate}[label=\empty,ref=\text{H\arabic*}, start = 1] 
     \item \label{H1} 
     \begin{enumerate}[label = \textbf{H1-loc}, ref = \text{H1-loc}]
     \item \label{H1i} $\mathcal{M} \setminus \overline{\omega}$ contains a point $p \in \mathcal{Z}$. 
     \end{enumerate}
    \begin{enumerate}[label = \textbf{H1-glob}, ref = \text{H1-glob}]
    \item \label{H1ii} $d_{\operatorname{sR}}(\omega, \mathcal{Z}) > 0$.
    \end{enumerate}
\end{enumerate}

We observe that there exists an open set $\mathcal{U} \subset \mathcal{M} \setminus \overline{\omega}$, that is either
\begin{enumerate}[label=(\roman*),ref=\text{H\arabic*}, start = 1]
    \item a neighborhood of the point $p$ appearing in \ref{H1i},
    \item a neighborhood of $\mathcal{Z}$ if we are under \ref{H1ii},
\end{enumerate}
such that $\mathcal{U}$ is diffeomorphic to $(-L,L)\times \Omega_y$, where $\Omega_y = \mathbb{S}^1, \ \mathbb{R}$, or a bounded interval (see Figure \ref{fig1}). Indeed, in the setting of \ref{H1ii}, it is sufficient to choose $\mathcal{U} = \mathcal{Z}_L$, with $0 < L < \min(d_{\operatorname{sR}}(\omega, \mathcal{Z}), \operatorname{inj}(\mathcal{Z}))$. In the setting of \ref{H1i}, we can choose $\mathcal{U}$ to be a neighborhood of such a point $p$, such that it is diffeomorphic to $(-L,L) \times \mathcal{W}$, for some $L > 0$, with $\mathcal{W} \subset \mathcal{Z} \setminus \omega$ a relatively compact neighborhood of $p$ in $\mathcal{Z}$. In either case, since $\mathcal{Z}$ is one-dimensional and connected, $\mathcal{U}$ is always diffeomorphic to a set of the form $(-L,L) \times \Omega_y$. From now on, the notation $\mathcal{U}$ will designate a tubular neighborhood constructed as above under one of the setting outlined in \ref{H1}. \\ 

\begin{figure}
\begin{subfigure}[t]{0.4\linewidth}
\includegraphics[width=\linewidth]{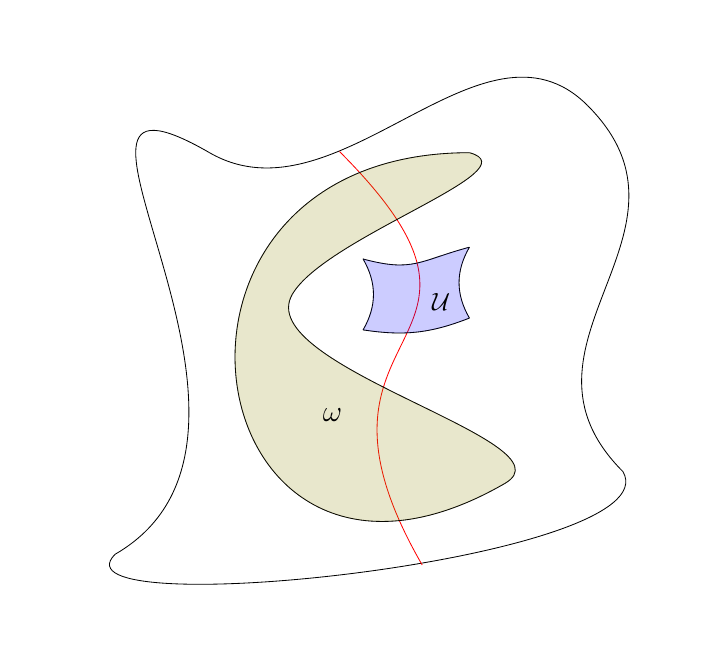}
\caption{An abstract manifold under \ref{H1i}.}
\end{subfigure}
\quad
\begin{subfigure}[t]{0.5\linewidth}
\includegraphics[width=\linewidth]{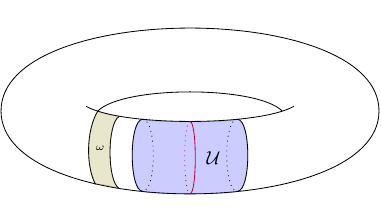}
\caption{The Torus under \ref{H1ii}.}
\end{subfigure}%
\caption{The singularity is in red, the control zone in green, and the tubular neighborhood in blue.}
\label{fig1}
\end{figure}

In such a tubular neighborhood, the sub-Riemannian structure is always generated by smooth vector fields of the form $X = \partial_x$ and $Y = \Tilde{q}(x,y)\partial_y$ (see \cite[Lemma 1, Theorem 1]{AgrachevGaussBonnet}).\\

We endow $\mathbb{M}$ with a smooth non-singular measure $\mu$, and we assume that there exists $L_0 \in (0,\min(d_{\operatorname{sR}}(\omega, \mathcal{Z}), \operatorname{inj}(\mathcal{Z})))$, such that in $\mathcal{U}$ diffeomorphic to $(-L_0,L_0) \times \Omega_y$, the vector fields $X, Y$, and the measure $\mu$, write as 
\begin{enumerate}[label=\textbf{H\arabic*},ref=\text{H\arabic*}, start = 2]
    \item \label{H2} $X = \partial_x$ and $Y = q(x)r(y)\partial_y$, with $q \in C^\infty([-L,L])$, $r \in C^\infty(\Omega_y)$, $r > 0$ and identically one whenever $\Omega_y = \mathbb{R}$, and $q$ satisfying
    \begin{align*}
        \partial_x^k q(0) = 0 \text{ for all } k \in \{0, ..., \gamma -1\}, \partial_x^\gamma q(0) > 0 \text{, and } q(x) \neq 0 \text{ for every } x \neq 0,
    \end{align*}
    \item \label{H3} $\mu = h(x)dxdy$, with $h > 0$. 
\end{enumerate}
Observe that given the form of $Y$ in \ref{H2}, the assumptions on $q$ and the fact that we need $\gamma$ iteration of Lie brackets only at the singularity $\mathcal{Z} \simeq \{0\} \times \Omega_y$ in \ref{H0} are equivalent.\\

We also emphasize that in particular, \ref{H2} says that the sub-Riemannian structure on $\mathcal{M}$ is generated, at least locally, by the smooth vector fields $\left\{\partial_x, q(x)r(y) \partial_y\right\}$. Moreover, it implies the assumption on the injectivity radius from $\mathcal{U} \cap \mathcal{Z}$ given in \ref{H0}.

\begin{remark}
    Under \ref{H1ii}, our results below will hold true if we write $\mu = h_1(x)h_2(y)dxdy$, with $h_2 > 0$. For the simplicity of the presentation, we make the choice of taking $\mu$ as prescribed by \ref{H3}. 
\end{remark}

We denote by $\Delta$ the sub-Laplacian with respect to $\mu$. That is, for every sufficiently regular function $f$, in $\mathcal{U}$ we have
\begin{align}\label{definition Delta}
    \Delta f = \operatorname{div}_\mu(\nabla f) = \frac{1}{h(x)}\partial_x(h(x) \partial_x f) + q(x)^2\partial_y(r(y)^2 \partial_y f).
\end{align}

Set $L^2(\mathcal{M}) := L^2(\mathcal{M}, \mu)$ the set of square integrable, with respect to $\mu$, real-valued functions on $\mathcal{M}$. We consider the Friedrich extension of $\Delta$ with minimal domain $C_c^\infty(\mathcal{M})$, and we keep this notation. That is, given a generating frame $\{X_1,...X_m\}$ for the sub-Riemannian structure, the form domain of $\Delta$, that is $D\left(\Delta^{1/2} \right)$, is the completion of $C_c^\infty(\mathcal{M})$ with respect to the horizontal norm 
\begin{align*}
    \|f\|_{D(\Delta^{1/2})}^2 := \int_\mathcal{M} \sum_{i=1}^m |X_i f|^2 \ d \mu, \quad f \in C_c^\infty(\mathcal{M}).
\end{align*}
The domain of $\Delta$ is $D(\Delta) = \left\{ f \in D\left(\Delta^{1/2} \right), \ \Delta f \in L^2(\mathcal{M}) \right\}$. Similarly, we denote by $\Delta_\mathcal{U}$ the restriction of $\Delta$ to $\mathcal{U}$, that is with domain $D(\Delta_\mathcal{U}) = \left\{ f \in D\left(\Delta_\mathcal{U}^{1/2} \right), \ \Delta_\mathcal{U} f \in L^2 \left(\mathcal{M} \right) \right\}$.

\subsection{The control problem and main result} 

The goal of this paper is to study the null-controllability properties of the following system. Let $T > 0$, 
\begin{align}\label{control grushin system M}
    \left\{ \begin{array}{lcll}
     \partial_t f - \Delta f & = & u(t,p)\mathbf{1}_\omega(p), & (t,p) \in (0,T) \times \mathcal{M},\\
     f(t,p) & = & 0, & (t,p) \in (0,T) \times \partial \mathcal{M}, \text{ if $\partial \mathcal{M} \neq \emptyset,$} \\
     f(0,p) & = & f_0(p), & p \in \mathcal{M},
    \end{array} \right.
\end{align}
where $f_0 \in L^2(\mathcal{M})$, $f \in L^2((0,T) \times \mathcal{M})$ is the state, and $u \in L^2((0,T)\times \mathcal{M})$ is the control supported in $\omega$. 

\begin{remark}
    From \ref{H0}, the operator $\Delta$ with domain $D(\Delta)$ is densely defined, self-adjoint on $L^2(\mathcal{M})$, and generates an analytic semigroup of contractions $(e^{t\Delta})_{t\geq0}$ on $L^2(\mathcal{M})$ (\cite[p. 261]{strichartz}). Moreover it is hypoelliptic (\cite[Theorem 1.1]{HormanderHypoell}). From \cite[Sec. 4.2]{Pazy2012-qf}), system \eqref{control grushin system M} is well posed in the sense that for every $f_0 \in L^2(\mathcal{M})$, there exists a unique solution $f \in C^0([0,T],L^2(\mathcal{M})) \cap L^2((0,T),D(\Delta^{1/2}))$ given by the Duhamel formula.
\end{remark}

\begin{definition}[Null-Controllability] \label{def nullcontr}
We say that system \eqref{control grushin system M} is null-controllable in time $T$ from $\omega$ if, for every $f_0 \in L^2(\mathcal{M})$, there exists $u \in L^2((0,T)\times \mathcal{M})$ such that the associated solution $f$ of system \eqref{control grushin system M} satisfies $f(T, \cdot, \cdot) = 0$ on $\mathcal{M}$.
\end{definition}

Let us introduce respectively the minimal time for null-controllability and the Agmon distance as 
\begin{align}
    T(\omega) &:= \inf \{ T > 0, \text{ such that the system is null-controllable in time $T$ from $\omega$} \},\\
    d_{\operatorname{agm}} &: x \in (-L,L) \mapsto \int_{\min(0,x)}^{\max(0,x)} |q(s)| \ ds. \label{agmon distance gamma = 1}
\end{align}

For a nonempty set $A \subset (-L,L)$, we may write in this paper $d_{\operatorname{agm}}(A) = \inf_{x \in A}d_{\operatorname{agm}}(x)$.\\

Observe that $T(\omega)=\infty$ if and only if the system of interest is not null-controllable independently of the final time $T>0$. 

\begin{theorem}\label{th nullcontr manifold} Assume \ref{H0} and \ref{H0'}, and that \ref{H2}-\ref{H3} holds in each of the settings of \ref{H1}. Depending on the value of $\gamma \geq 1$, we have the following statements for system \eqref{control grushin system M}. 
\begin{enumerate}[label=(\roman*), ref = \ref{th nullcontr manifold}(\roman*)]
    \item \label{th nullcontr manifold loc}Assume that $\gamma = 1$, that we are in the setting of \ref{H1i}, and $r$ is identically one. Then, there exists $L \in (0, \min(d_{\operatorname{sR}}(\omega, \mathcal{Z}), \operatorname{inj}(\mathcal{Z}))$ such that
    \begin{align}
        T(\omega) \geq  \frac{1}{q'(0)}\min \{ d_{\operatorname{agm}}(-L), d_{\operatorname{agm}}(L) \}.
    \end{align}
    \item \label{th nullcontr manifold glob} Assume that we are in the setting of \ref{H1ii}. If $\gamma > 1$, then $T(\omega) = \infty$, and if $\gamma = 1$, there exists $L \in (0, \min(d_{\operatorname{sR}}(\omega, \mathcal{Z}), \operatorname{inj}(\mathcal{Z}))$ such that
    \begin{align}
        T(\omega) \geq  \frac{1}{q'(0)}\min \{ d_{\operatorname{agm}}(-L), d_{\operatorname{agm}}(L) \}.
    \end{align} 
\end{enumerate}
\end{theorem}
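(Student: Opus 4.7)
The plan is to disprove null-controllability in short time by violating the associated observability inequality through an explicit sequence of quasi-modes concentrated near the singularity $\mathcal{Z}$. By the Hilbert Uniqueness Method, null-controllability in time $T$ from $\omega$ is equivalent to
\begin{align*}
\|e^{T\Delta}g\|_{L^2(\mathbb{M})}^2 \leq C_T \int_0^T \|e^{t\Delta}g\|_{L^2(\omega)}^2 \, dt, \quad \forall g \in L^2(\mathbb{M}).
\end{align*}
The goal is therefore to produce a sequence $(g_n)_{n\geq 1} \subset L^2(\mathbb{M})$, localized in $\mathcal{U}$, for which the right-hand side becomes exponentially smaller than the left-hand side whenever $T$ is below the claimed threshold.

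Inside $\mathcal{U}\simeq(-L,L)\times\Omega_y$, formula \eqref{definition Delta} splits $-\Delta$ into the sum of $-h^{-1}\partial_x(h\partial_x\cdot)$ and $-q(x)^2\partial_y(r(y)^2\partial_y\cdot)$, so separation of variables applies. I fix an eigenpair $(\psi_n,-\nu_n^2)$ of the $y$-operator---discrete spectrum when $\Omega_y\in\{\mathbb{S}^1,\text{bounded interval}\}$, a high-frequency wave packet at frequency $\nu_n\to\infty$ when $\Omega_y=\mathbb{R}$---and reduce to the one-dimensional Schrödinger-type operator
\begin{align*}
L_n\phi := -\frac{1}{h(x)}\partial_x\bigl(h(x)\partial_x\phi\bigr) + \nu_n^2 q(x)^2\phi
\end{align*}
on $(-L,L)$. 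A semi-classical analysis in the small parameter $1/\nu_n$ yields the ground-state asymptotics
\begin{align*}
\lambda_1(n) = \nu_n q'(0) + O(1) \text{ when } \gamma = 1, \qquad \lambda_1(n) \asymp \nu_n^{2/(\gamma+1)} \text{ when } \gamma \geq 2,
\end{align*}
the former via the harmonic-oscillator approximation around $x=0$, the latter via the anharmonic rescaling $x=\nu_n^{-1/(\gamma+1)}\tilde{x}$. Standard Agmon estimates for $L_n$, whose Agmon weight is generated by the potential $q^2$, yield the pointwise decay
\begin{align*}
|\phi_n(x)| \lesssim \exp\bigl(-\nu_n |d_{agm}(x)|\bigr)
\end{align*}
for the associated ground state.

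I then set $g_n(x,y)=\chi(x,y)\phi_n(x)\psi_n(y)$, extended by zero outside $\mathcal{U}$, where $\chi$ is a smooth cutoff localizing inside $\mathcal{U}$ and identically equal to one on a smaller neighborhood of the point $p$ under \ref{H1i} (resp.\ of $\mathcal{Z}$ under \ref{H1ii}). The Agmon decay ensures $\|g_n\|_{L^2(\mathbb{M})}\sim 1$, that $g_n$ is supported away from $\omega$, and that the commutator term $[\Delta,\chi]\phi_n\psi_n$ has size $O(\exp(-c\nu_n))$, so $g_n$ is a genuine quasi-mode, $\|\Delta g_n+\lambda_1(n)g_n\|_{L^2}=O(\exp(-c\nu_n))$. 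A Duhamel argument gives $e^{t\Delta}g_n = e^{-t\lambda_1(n)}g_n + O(\exp(-c\nu_n))$ in $L^2(\mathbb{M})$, while on $\omega$ the Agmon weight produces
\begin{align*}
\|e^{t\Delta}g_n\|_{L^2(\omega)}^2 \lesssim e^{-2t\lambda_1(n)}\exp(-2\nu_n m),
\end{align*}
where $m = \min\{|d_{agm}(L)|,|d_{agm}(-L)|\}$ governs the worst-case decay from the singularity to $\omega$.

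The observability ratio then behaves like $\lambda_1(n)\exp(2\nu_n m - 2T\lambda_1(n))$. When $\gamma = 1$, $\lambda_1(n)\sim\nu_n q'(0)$ and the ratio diverges as $n\to\infty$ as soon as $Tq'(0) < m$, yielding the announced lower bound. When $\gamma>1$, $\lambda_1(n) = o(\nu_n)$, so the exponential dominates for every fixed $T>0$, proving $T(\omega)=\infty$ under \ref{H1ii}. The main obstacles will be: (a) securing sharp first-eigenvalue asymptotics together with matching Agmon upper bounds, the constant $q'(0)$ in case (i) being delicate as it requires the optimality of the harmonic-oscillator approximation; (b) handling $\Omega_y=\mathbb{R}$ via wave-packet quasi-modes, since the transverse operator then has continuous spectrum and no true eigenfunction exists; and (c) under \ref{H1i}, choosing the cutoff $\chi$ so that $\operatorname{supp} g_n \cap \omega = \emptyset$, which is possible precisely thanks to the geometric construction in Section \ref{section setting} producing $\mathcal{U}$ as a tubular neighborhood of a relatively compact $\mathcal{W}\subset\mathcal{Z}\setminus\omega$.
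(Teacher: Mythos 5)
Your quasi-mode strategy works for part (ii) under \ref{H1ii}, and is in spirit close to what the paper does there (reduce to $\mathcal{U}$, separate variables, apply an Agmon argument as in Theorem \ref{th gamma >= 1 euclidean bounded}); using cutoff quasi-modes directly on $\mathbb{M}$ is a legitimate shortcut to the paper's internal-to-boundary-to-internal reduction process, precisely because under \ref{H1ii} the tubular neighborhood $\mathcal{U}=\mathcal{Z}_L$ wraps around the entire singularity, so the only cutoff needed is in the $x$-direction where the ground state $\phi_n$ decays exponentially.

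However, there is a genuine gap in part (i), and it is exactly the reason the paper does not use a quasi-mode argument there. Under \ref{H1i} the control zone is only required to avoid a neighborhood of one point $p\in\mathcal{Z}$; it may — and after the reduction process, does — touch the singularity in the transverse direction. Concretely, after reduction the control zone is the complement of a rectangle $[-a,b]\times\overline{I}$ with $\overline{I}$ a proper subinterval of $\Omega_y$, so $\Tilde{\omega}$ contains $\{0\}\times(\Omega_y\setminus\overline{I})$. Your ansatz $g_n=\chi\,\phi_n(x)\psi_n(y)$ then requires a cutoff in the $y$-direction, but the transverse Dirichlet eigenfunction $\psi_n$ on $\mathcal{W}$ does not decay inside $\mathcal{W}$: it vanishes only linearly at $\partial\mathcal{W}$. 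Consequently the commutator $[\Delta,\chi]\phi_n\psi_n$ carries a contribution of the form $q(x)^2\,\partial_y\chi\cdot\partial_y\psi_n$ (plus a term in $\partial_y^2\chi$) that is $O(\nu_n)$ in $L^2$, not $O(e^{-c\nu_n})$ as your argument requires. Your claim that $g_n$ is a quasi-mode with exponentially small residual therefore fails, and with it the Duhamel step. This is precisely why the paper proves part (i) via Theorem \ref{th gamma = 1 euclidean koenig}, whose proof is the complex-analytic machinery of Koenig and Dardé--Koenig--Royer (polynomial approximation, holomorphic interpolation of the error operator $\gamma_{t,x}(z\partial_z)$, Runge's theorem), and not via separation-of-variables quasi-modes — the control geometry forbids them. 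To repair your argument for (i) you would have to abandon the transverse eigenfunction ansatz entirely and follow the complex-analysis route, or find some other mechanism to handle the $y$-direction.

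A smaller point: your eigenvalue asymptotic $\lambda_1(n)=\nu_n q'(0)+O(1)$ for $\gamma=1$ is stated too optimistically. The paper obtains $\lambda_n=q'(0)\sqrt{\nu_n}+o(\sqrt{\nu_n})$ (Corollary \ref{corollary dissipation general grushin}), which is sufficient here, but the $O(1)$ remainder is not established and is not needed; what is needed, and what does require work, is the holomorphic extension of $\nu\mapsto\lambda(\nu)$ together with resolvent bounds uniform in a sector $\Sigma_{\theta_0}$ (Proposition \ref{proposition estimates resolvent}), which is a different and harder statement than a real-variable remainder bound.
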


\subsection{State of the art}

Around the seventies, V.V. Grushin, in \cite{Grušin_1971, Grušin_1970}, and M.S. Baouendi, in \cite{BSMF_1967__95__45_0}, introduced a class of degenerate hypoelliptic operators $-\partial_x^2 - x^{2\gamma}\partial_y^2$, with $\gamma > 0$, now commonly referred to as the classical (Baouendi-)Grushin operators. The first controllability result was given in \cite{beauchard2014null}, on the rectangle $(-1,1) \times (0,\pi)$ with Dirichlet boundary conditions. In this study, the control was supported on a vertical strip contained in $(0,1) \times (0,\pi)$, with its closure not intersecting the singularity $\{0\} \times (0,\pi)$. It is showed that when $\gamma < 1$, the equation is null-controllable in any time $T > 0$, that when $\gamma > 1$, null-controllability never occurs, and that when $\gamma = 1$, a minimal time is required to have null-controllability. This minimal time, for $\gamma = 1$, was latter obtained in \cite{beauchard20152d}, with the control supported on two vertical strips, in each side of the singularity, and for which the closure does not intersect the latter. The first result for the classical Grushin equation on $(-1,1) \times \mathbb{S}^1$, and on the rectangle, with the control supported on the complement of a horizontal strip, was given in \cite{CRMATH_2017__355_12_1215_0}. It is proved that when $\gamma = 1$, null-controllability is never achieved. In this similar setting for the control zone, but this time with the equation posed on $\mathbb{R}^2$, the same result was obtained in \cite{LISSYprolate2025}. In \cite{duprez2020control}, the minimal time and non null-controllability of the classical Grushin equation were thoroughly explored, considering a broader range of control support configurations.\\

The Grushin operator was then subsequently generalized by replacing $x^\gamma$ by a sufficiently regular function $q$ behaving like $x^\gamma$ as $x \rightarrow 0$. Similar results as for the classical operator were obtained. On the rectangle, the problem of boundary null-controllability has been investigated in \cite{beauchard2020minimal}. For the problem of internal null-controllability, for a wide range of considerations for the control zone, results were obtained in \cite{darde2023null}. On the Grushin sphere, endowed with the canonical measure inherited by $\mathbb{R}^3$, analogous results were obtained in \cite{tamekue2022null}. \\

Finally, considering the fractional Grushin equation, positive results were obtained very recently on $\mathbb{R}^n \times \mathbb{R}^m$ or $\mathbb{R}^n \times \mathbb{T}^m$ in \cite{jaming:hal-04245179} using spectral inequalities, and in \cite{letrouit2023observability} by means of resolvent estimates.\\

Although the question of controllability properties of the generalized Grushin operator was mainly investigated in some precise settings, the question of giving a geometric interpretation of these results on manifolds remains, as for a geometric interpretation of the null-controllability results of the sub-elliptic heat equation in general.\\

Concerning other results of null-controllability of a class of degenerate parabolic equation, we can also cite \cite{beauchard2017heat} that study the heat equation on the Heisenberg group, or results concerning the Kolmogorov equation \cite{LEROUSSEAU20163193, beauchardKolmog2015, koenigfractionalheat, beauchardkolmogorov} among others.

\subsection{Structure of the paper}
In Section \ref{section sketch of proofs}, we outline the proofs of our main result, and those of our complementary results.\\

In Section \ref{section spectral analysis}, we collect some results concerning the spectral analysis of the operators under consideration. In particular, in Section \ref{section dissipation speed classical grushin} and \ref{section exponential decay G_0}, we respectively remind of the behavior of the first eigenvalue, and exponential decay of the associated first eigenfunction, of the Fourier components of the classical Grushin operator
\begin{align}\label{eqn: def fourier components classical grushin}
    \mathsf{G}_{\xi} = -\partial_x^2 + \frac{q^{(\gamma)}(0)^2}{(\gamma!)^2}\xi^2x^{2\gamma}, \quad \xi \in \mathbb{R} \setminus \{0\}.
\end{align}

In Section \ref{section spectral G_V real}, we provide the asymptotic behavior of a sequence of eigenvalues of the Fourier components of the generalized Grushin operator 
\begin{align}\label{eqn: def fourier components generalized grushin}
    G_{V,\xi} = - \partial_x^2 + \xi^2q(x)^2 + V(x), \quad \xi \in \mathbb{R} \setminus \{0\}.
\end{align}
The study of the exponential decays of the associated eigenfunctions are encapsulated within the proofs of the theorems. In Section \ref{section spectral G_V complex}, we study the behavior of the first eigenvalue of the operator $G_{V,\xi}$ for $\xi \in \mathbb{C}$. \\

In Section \ref{section grushin euclidiens}, we study the non null-controllability of initial states in form domains, for the generalized Grushin equation posed on Euclidean domains. When the control acts on vertical strips at non-negative distance from the singularity, for $\gamma \geq 1$, we prove a non null-controllability result  in Section \ref{section proof th euclid bounded}. For $\gamma = 1$, and the control acting on the complement of a rectangle, we provide a non null-controllability result in Section \ref{section proof th euclid bounded koenig}. \\

Finally, in Section \ref{section proofs of theorem on manifolds}, we prove Theorem \ref{th nullcontr manifold}.

\section{Sketch of proofs}\label{section sketch of proofs}

Let us begin by outlining the proof of Theorem \ref{th nullcontr manifold}, which is presented in Section \ref{section proofs of theorem on manifolds}. We fix $\mathcal{U}$ a tubular neighborhood prescribed by \ref{H1i} or \ref{H1ii}, and satisfying additional conditions that will be precised throughout the proof. Theorem \ref{th nullcontr manifold} will be proved by contradiction, together with cutoff arguments. \\

The underlying idea is the following. We assume that system \eqref{control grushin system M} is null-controllable in time $T>0$ from $\omega$ satisfying one of the conditions of \ref{H1}. We fix $\mathcal{U}$ as prescribed by the sub-Riemannian setting, and sufficiently small such that at least \ref{H2} and \ref{H3} hold (and additional conditions made precise in Section \ref{section proofs of theorem on manifolds}). \\ 

Given an initial state $f_0 \in L^2(\mathcal{U})$, we can construct under the null-controllability assumption of system \eqref{control grushin system M}, by means of cutoffs arguments, a control $u$ supported in an arbitrary small neighborhood of $\partial \mathcal{U} \setminus \partial \mathbb{M}$, that we denote by $\tilde{\omega}$, such that the associated solution of 
\begin{align}\label{eqn: system U sketch}
        \left\{ \begin{array}{lcll}
        \partial_t f - \Delta_\mathcal{U} f & = & \mathbf{1}_{\tilde{\omega}}(x,y) u(t,x,y), & (t,x,y) \in (0,T) \times \mathcal{U},\\
        f(t,x,y) & = & 0, & (t,x,y) \in (0,T) \times \partial \mathcal{U}, \\
        f(0,x,y) & = & f_0(x,y), & (x,y) \in \mathcal{U},
        \end{array} \right.
    \end{align}
satisfies $f(T) = 0$. We call this a \textit{reduction process}. \\

However, when working under \ref{H1i}, in system \eqref{eqn: system U sketch} the control $u$ constructed as above, and steering $f_0 \in L^2(\mathcal{U})$ to $0$, is not necessarily in $L^2((0,T) \times \tilde{\omega})$. This is due to the loss of regularity near the singularity $\mathcal{Z}$. A sufficient condition ensuring that the induced control belongs to $L^2((0,T) \times \tilde{\omega})$ is that the initial state in $\mathcal{U}$ is taken sufficiently regular.\\

Thus, for the control system \eqref{eqn: system U sketch}, with state space $L^2(\mathcal{U})$, we study the null-controllability only for sufficiently regular initial states, which in our case, under \ref{H1i}, are to be taken in $D \left(\Delta_\mathcal{U}^{1/2} \right)$ (see Proposition \ref{proposition internal M to U}). To avoid any ambiguity and to ensure uniformity across the statements and proofs, we will therefore study the null-controllability of initial states in the form domain for any of our control systems in $\mathcal{U}$, regardless of whether we work under \ref{H1i} or \ref{H1ii}. \\ 

To study the null-controllability of initial states in the form domain, of system \eqref{eqn: system U sketch}, we address the latter within its coordinates representation, where $\Delta_\mathcal{U}$ is expressed as \eqref{definition Delta}. After the transformation $f \mapsto \sqrt{h} f$, we are left with addressing the null-controllability properties of initial states taken in the form domain of the spatial operator properly introduced below in \eqref{def: GV}, of the following system on $L^2(\Omega,dxdy)$, where $\Omega = \Omega_x \times \Omega_y$ is given in the sub-Riemannian setting, the control zone satisfies one of the conditions outlined in \ref{H1}, and $V \in L^\infty(\Omega_x)$ depends on the measure $h(x)dxdy$. Let $T > 0$, 
\begin{align} \label{control system change variable}
\left\{ \begin{array}{lcll}
     \partial_t f - \partial_x^2f - q(x)^2\partial_y(r(y)^2 \partial_y f) + V(x)f & = & u(t,x,y)\mathbf{1}_{\Tilde{\omega}}(x,y), & (t,x,y) \in (0,T) \times \Omega,\\
     f(t,x,y) & = & 0, & (t,x,y) \in (0,T) \times \partial \Omega, \\
     f(0,x,y) & = & f_0(x,y), & (x,y) \in \Omega.
    \end{array} \right.
\end{align}
Consequently, Theorem \ref{th nullcontr manifold loc} and \ref{th nullcontr manifold glob} follow, respectively, from Theorem \ref{th gamma = 1 euclidean koenig} and Theorem \ref{th gamma >= 1 euclidean bounded} below. \\

The \textit{reduction process} is treated in Section \ref{section reduction process}, while Theorem \ref{th nullcontr manifold loc} and \ref{th nullcontr manifold glob}  are both proved in Section \ref{section proof of Theorem nullcontr manifold}.\\

To allow us to work exclusively in the Euclidean setting, let us denote by $G_V$ the operator
\begin{align*}
    G_V = - \partial_x^2 - q(x)^2\partial_y(r(y)^2 \partial_y) + V(x),
\end{align*}
with minimal domain $C_c^\infty(\Omega)$. We consider its Friedrich extension, still denoted by $G_V$, and defined by 
\begin{align}\label{def: GV}
    \begin{array}{ccl}
        D(G_V) &=& \left\{ f \in D\left(G_V^{1/2}\right), \ G_Vf \in L^2(\Omega) \right\}  \\[8pt]
         G_V &=& - \partial_x^2f - q(x)^2\partial_y(r(y)^2 \partial_y) + V(x), 
    \end{array}
\end{align}
where $D\left(G_V^{1/2} \right)$ is the completion of $C_c^\infty(\Omega)$ with respect to the norm 
\begin{align*}
    \|f\|_{D\left(G_V^{1/2}\right)}^2 = \int_\Omega |\partial_xf|^2 + q(x)^2r(y)^2|\partial_y f|^2 + V(x) |f|^2 \ dx \ dy, \quad f \in C_c^\infty(\Omega).
\end{align*}

\begin{theorem}[Control in the complement of a rectangle]\label{th gamma = 1 euclidean koenig}
Let $\gamma = 1$. Consider system \eqref{control system change variable} with $\Omega_x = (-L,L)$, $\Omega_y = (0, \pi)$, and $\Tilde{\omega}$ the complement of a rectangle $[-a,b] \times \overline{I}$, $a,b > 0$, where $\overline{I}$ is a proper closed interval of $\Omega_y$. Assume that $q$ satisfies \ref{H2}, $r$ is identically one on $\Omega_y$, and $V \in L^\infty(\Omega)$. Then, for any time $T > 0$ such that
\begin{align}
     T < \frac{1}{q'(0)}\min (d_{\operatorname{agm}}(-a),d_{\operatorname{agm}}(b)),
\end{align}
where $d_{\operatorname{agm}}(x)$ is introduced in \eqref{agmon distance gamma = 1}, there exists $f_0 \in D\left(G_V^{1/2}\right)$ that cannot be steered to $0$ in $L^2(\Omega)$ by means of an $L^2((0,T) \times \tilde{\omega})$-control.
\end{theorem}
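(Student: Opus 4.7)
The plan is to use the HUM duality. System \eqref{control system change variable} is null-controllable from $\Tilde{\omega}$ in time $T$ if and only if the dual observability inequality
\[
\|f(T)\|_{L^2(\Omega)}^2 \;\leq\; C_T \int_0^T \!\int_{\Tilde{\omega}} |f(t,x,y)|^2\,dx\,dy\,dt
\]
holds for every solution $f$ of the homogeneous equation $\partial_t f + L f = 0$, where $L := -\partial_x^2 - q(x)^2 \partial_y^2 + V(x)$ is self-adjoint on $L^2(\Omega)$ with Dirichlet boundary data. It will therefore suffice, given $T < T^\ast := \frac{1}{q'(0)}\min(d_{agm}(-a), d_{agm}(b))$, to exhibit a sequence $(f_n)$ of solutions along which the ratio $\|f_n(T)\|^2 / \int_0^T\!\int_{\Tilde{\omega}}|f_n|^2$ diverges. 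A first observation is that the naive single-eigenfunction test $\phi_n(x,y) = v_n(x)\sin(ny)$, where $v_n$ denotes the first Dirichlet eigenfunction of $L_n = -\partial_x^2 + n^2 q^2 + V$ on $(-L,L)$, does not work: since $\Tilde{\omega}$ contains the horizontal band $(-a,b)\times((0,\pi)\setminus\overline{I})$, the ratio $\int_{\Tilde{\omega}}|\phi_n|^2/\|\phi_n\|^2$ stays bounded below by a positive constant even as $n\to\infty$. The construction must therefore localize in $y$ inside $\overline{I}$.

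The localization is achieved via the complex-frequency spectral analysis of Section~\ref{section spectral G_V complex}: for $\xi \in \mathbb{C}$ in an appropriate sector with $|\xi|$ large, the operator $L_\xi := -\partial_x^2 + \xi^2 q(x)^2 + V(x)$ on $(-L,L)$ with Dirichlet conditions admits a simple, holomorphically varying first eigenvalue $\mu(\xi)$ with $\mu(\xi)/\xi \to q'(0)$, and a first eigenfunction $v_\xi$ satisfying an Agmon-type bound $|v_\xi(x)| \leq C_\varepsilon \, e^{-(1-\varepsilon)\,\mathrm{Re}(\xi)\,d_{agm}(x)}$. Fix $y_0$ in the interior of $\overline{I} = [y_1, y_2]$ and a smooth cutoff $\chi \in C^\infty_c((0,\pi))$ equal to $1$ on a small neighborhood of $\overline{I}$. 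For $\xi_n = n + i\tau_n$ with $\tau_n \to \infty$ and $\tau_n/n \to 0$, set
\[
f_n(t,x,y) \;:=\; e^{-\mu(\xi_n)(T-t)}\,v_{\xi_n}(x)\,\chi(y)\,\sin(\xi_n (y-y_0)).
\]
This $f_n$ fails to solve $\partial_t f_n + L f_n = 0$ only through a commutator error $R_n$ supported in $(-L,L)\times\mathrm{supp}(\chi')$, which will be reincorporated into a genuine solution via Duhamel.

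The core of the argument is then a comparison of exponential scales in the observability ratio. Using the identity $|\sin(\xi_n z)|^2 = \sin^2(nz) + \sinh^2(\tau_n z)$, the $y$-profile of $f_n(T)$ (after cutoff) concentrates at the endpoint of $\mathrm{supp}(\chi)$ farthest from $y_0$. By choosing the margin of $\mathrm{supp}(\chi)$ beyond $\overline{I}$ to be small, this growth dominates $\|f_n(T)\|^2_{L^2(\Omega)}$ from below, while on the horizontal slab $(-a,b)\times((0,\pi)\setminus\overline{I})$ inside $\Tilde{\omega}$ the cutoff $\chi$ forces the integrand to stay strictly smaller, up to a factor controlled by that same margin. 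On the vertical strips $x\in (-L,-a)\cup (b,L)$, the Agmon bound on $v_{\xi_n}$ contributes the factor $e^{-2n \min(d_{agm}(-a), d_{agm}(b))}$, and the time integration brings in the semigroup growth $e^{2T\mathrm{Re}\,\mu(\xi_n)} \leq e^{2Tq'(0)n + o(n)}$. Collecting everything and tuning $\tau_n$ so as to balance the horizontal and vertical contributions, the observability ratio is bounded by
\[
n^{O(1)}\,\exp\!\Bigl(2n\bigl(Tq'(0)-\min(d_{agm}(-a), d_{agm}(b))\bigr) + o(n)\Bigr),
\]
which vanishes as $n\to\infty$ for every $T < T^\ast$, and hence violates observability.

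The hard part will be handling the Dirichlet condition in $y$ jointly with the complex frequency. The ansatz $v_{\xi_n}\sin(\xi_n(\cdot - y_0))$ does not satisfy it, so the cutoff $\chi$ is unavoidable, and the resulting commutator $R_n = -e^{-\mu(\xi_n)(T-t)}\,q(x)^2 v_{\xi_n}(x)\,(2\xi_n\chi'(y)\cos(\xi_n(y-y_0)) + \chi''(y)\sin(\xi_n(y-y_0)))$ has a prefactor of order $|\xi_n|$ which must be reabsorbed by Duhamel without destroying the exponential hierarchy. This is made possible by choosing $\mathrm{supp}(\chi')$ strictly outside $\overline{I}$, so the error picks up an extra exponential decay beyond the $\tau_n$ loss, and by using the sharp asymptotics of $\mu(\xi)$ and $v_\xi$ in the intermediate regime $1 \ll \tau_n \ll n$ provided by Section~\ref{section spectral G_V complex}. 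A secondary technical task is the careful optimization of the scaling $\tau_n$ and the margin of $\chi$ so that the composite exponent really exceeds $Tq'(0) - \min(d_{agm}(-a), d_{agm}(b))$.
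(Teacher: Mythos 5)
Your proposal takes a genuinely different route from the paper. The paper passes to $\Omega_y = \mathbb{S}^1$, builds \emph{polychromatic} Fourier solutions $\sum_{n>N} a_{n-1}\gamma_{t,x}(n-1)e^{n(iy-q'(0)t-(1-\epsilon)d_{\mathrm{agm}}(x))}$, and, after the change of variable $z = e^{iy-q'(0)t-(1-\epsilon)d_{\mathrm{agm}}(x)}$, reduces the observability inequality to an $L^2$--$L^\infty$ estimate on complex polynomials that it then destroys by Runge's theorem; the potential $V$ is absorbed by showing (Proposition \ref{proposition estimates resolvent}) that the resolvent estimates of \cite{darde2023null} survive the perturbation, so that the holomorphic interpolation machinery applies unchanged. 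You instead try to build a \emph{monochromatic} quasi-mode with a single complex frequency $\xi_n = n + i\tau_n$ and a $y$-cutoff, and to conclude by comparing exponential scales.

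The construction, however, has a sign error that is fatal to the localization step. You rightly compute $|\sin(\xi_n z)|^2 = \sin^2(nz) + \sinh^2(\tau_n z)$, but this function \emph{grows} like $e^{2\tau_n|z|}$ away from $z=0$; since your cutoff $\chi$ equals $1$ on a neighborhood of $\overline{I}$, the maximum of $\chi(y)^2|\sin(\xi_n(y-y_0))|^2$ over $\mathrm{supp}\,\chi$ is attained in $\mathrm{supp}\,\chi'\subset (0,\pi)\setminus\overline{I}$, not inside $\overline{I}$. Concretely, the $L^2$-mass of the $y$-profile over $(0,\pi)\setminus\overline{I}$ is \emph{comparable to} (indeed dominates) its total $L^2$-mass: the ratio of the former to the latter is $\gtrsim 1$, never small. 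Since $\|v_{\xi_n}\|_{L^2(-a,b)}\sim\|v_{\xi_n}\|_{L^2(-L,L)}$ as well (the $x$-mass is concentrated near $0$, which lies inside $[-a,b]$), the observability integral over the horizontal slab $(-a,b)\times((0,\pi)\setminus\overline{I})$ captures essentially all of $\|f_n(T)\|^2$, and the observability ratio goes to zero rather than to infinity. For the same reason, your claim that placing $\mathrm{supp}\,\chi'$ outside $\overline{I}$ makes the commutator $R_n$ ``pick up an extra exponential decay'' is backwards: on $\mathrm{supp}\,\chi'$ the $\sinh$ factor is \emph{larger} than anywhere in $\overline{I}$, so $R_n$ picks up extra exponential \emph{growth}, compounding the $|\xi_n|$ prefactor. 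Flipping to $\mathrm{supp}\,\chi\subset\overline{I}$ repairs the localization but places $\mathrm{supp}\,\chi'$ exactly where the $\sinh$ is largest, and the commutator then dominates the ansatz; the Duhamel correction is no longer a perturbation and you would need a quantitative propagation estimate for the degenerate semigroup to salvage it, which your sketch does not supply. The basic obstruction is that a one-complex-frequency mode cannot localize symmetrically in a bounded $y$-interval --- this is precisely why the paper (following \cite{darde2023null,CRMATH_2017__355_12_1215_0}) superposes infinitely many real frequencies and uses polynomial approximation to achieve the localization.
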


\begin{theorem}[Control on vertical strips]  \label{th gamma >= 1 euclidean bounded}
Consider system \eqref{control system change variable} with $\Omega_x = (-L,L)$, $\Omega_y =  (0,\pi) \text{, } \mathbb{S}^1$ or $\mathbb{R}$. Assume that $\Tilde{\omega} = \Tilde{\omega}_x \times \Omega_y$, with $\operatorname{dist}(\Tilde{\omega}_x,0) > 0$, q satisfies \ref{H2}, and $V \in L^\infty(\Omega)$. 
\begin{enumerate}[label=(\roman*)]
\itemsep0.5em
    \item If $\gamma = 1 $, then, for any time
    \begin{align}
        T < \frac{d_{\operatorname{agm}}(\tilde{\omega}_x)}{q'(0)}
    \end{align}
    there exists $f_0 \in D\left(G_V^{1/2}\right)$ that cannot be steered to $0$ in $L^2(\Omega)$ by means of an $L^2((0,T) \times \tilde{\omega})$-control.
    \item If $\gamma >1 $, then, for any time $T > 0$, there exists $f_0 \in D\left(G_V^{1/2}\right)$ that cannot be steered to $0$ in $L^2(\Omega)$ by means of an $L^2((0,T) \times \tilde{\omega})$-control.
\end{enumerate}
\end{theorem}

Recall that when $\Omega_y = \mathbb{R}$, the function $r$ is assumed to be identically one.\\

In particular in the above two theorems we obtain some lower bounds on $T(\tilde{\omega})$ for system \eqref{control system change variable}. But these theorems are a little bit more precise as they state that even regular initial states may not be steerable to $0$.

\begin{remark}
    Theorems \ref{th gamma = 1 euclidean koenig} and \ref{th gamma >= 1 euclidean bounded} stay true for initial states in $D(G^s)$, for any $s \geq 0$, with $D(G^0) = L^2(\Omega)$. This will be clear from the proof for Theorem \ref{th gamma >= 1 euclidean bounded}. For this statement in the setting of Theorem \ref{th gamma = 1 euclidean koenig}, we refer the reader to Remark \ref{rmk: disprove control regular solutions}.
\end{remark}

\begin{remark}
    Smoothness of the functions $q$ and $r$ is not needed in Theorems \ref{th gamma = 1 euclidean koenig} and \ref{th gamma >= 1 euclidean bounded}. One can simply require $q \in C^{\gamma+1}([-L,L])$ and $r \in C^1(\Omega_y)$.
\end{remark}

\begin{remark}
    In the case $V = 0$ and for initial states in $L^2(\Omega)$, Theorem \ref{th gamma = 1 euclidean koenig} is already known from \cite[Theorem 1.3]{darde2023null}. 
\end{remark}

\begin{remark}\label{remark sketch of proofs}
While Theorems \ref{th gamma = 1 euclidean koenig} and \ref{th gamma >= 1 euclidean bounded} assume that $\Omega_x$ is symmetric with respect to zero, they remain valid when substituting $(-L,L)$ with any other interval containing $0$ in its interior (see Section \ref{section comments}). Same remark goes for $\Omega_y$, that can be replaced by any bounded interval instead of $(0,\pi)$.
\end{remark}

As customary, we will adopt the observability perspective. By linearity of our systems, the null-controllability in time $T>0$ of initial states in $D\left(G_V^{1/2}\right)$ of system \eqref{control system change variable} is equivalent to a $D\left(G_V^{-1/2}\right)-L^2$ observability inequality for its adjoint system (see \textit{e.g.} \cite[Theorem 2.44]{coron2007control}), that we simply call an observability inequality. 

\begin{definition}[$D\left(G_V^{-1/2}\right)-L^2$ observability]
Let $T>0$. We say that the adjoint system
\begin{align} \label{adjoint system change variable}
\left\{ \begin{array}{lcll}
     \partial_t f - \partial_x^2f - q(x)^2\partial_y(r(y)^2 \partial_y f) + V(x)f & = & 0, & (t,x,y) \in (0,T) \times \Omega,\\
     f(t,x,y) & = & 0, & (t,x,y) \in (0,T) \times \partial \Omega, \\
     f(0,x,y) & = & f_0(x,y), & (x,y) \in \Omega,
    \end{array} \right.
\end{align}
is observable from $\Tilde{\omega}$ in time $T$ if there exists $C>0$ such that for every $f_0 \in L^2(\Omega)$, the associated solution satisfies
\begin{align}\label{observability inequality Omega}
\|f(T)\|_{D\left(G_V^{-1/2}\right)}^2 \leq C \int_0^T \int_{\Tilde{\omega}} |f(t,x,y)|^2 \ dx \ dy \ dt.
\end{align}
\end{definition}

Let us start with Theorem \ref{th gamma >= 1 euclidean bounded} and with $\Omega_y$ bounded. Denote by $(\phi_n)_n$ the eigenfunctions of $-\partial_y(r(y)^2\partial_y)$, associated to $\xi_n^2 \rightarrow + \infty$, which form an Hilbert basis of $L^2(\Omega_y)$. We follow the classical idea, first proposed in \cite{beauchard2014null}, of testing the inequality \eqref{observability inequality Omega} against the sequence of solutions
\begin{align}\label{eqn: form sequence solutions}
    g_n(t,x,y) = e^{-\lambda_nt}v_n(x)\phi_n(y),
\end{align}
where $v_n$ is a normalized in norm eigenfunction, associated to $\lambda_n$, of 
\begin{align}\label{eqn: fourier component generalized operator}
    G_{V,\xi_n} = -\partial_x^2 + \xi_n^2q(x)^2 + V(x).
\end{align}

Assuming that the observability inequality \eqref{observability inequality Omega} holds from $\tilde{\omega}_x \times \Omega_y$ in time $T > 0$, and testing it against the solutions $g_n$ of the form \eqref{eqn: form sequence solutions}, thus implies the existence of a constant $C > 0$ such that, for every $n \geq 1$, 
\begin{align}\label{eqn: uniform observability eigenfunctions}
    \|g_n(T)\|_{D\left(G_V^{-1/2}\right)}^2 = \frac{e^{-2\lambda_nT}}{\lambda_n}\leq C T \int_{\tilde{\omega}_x} |v_n(x)|^2 \ dx \ dt.
\end{align}

We therefore have to compare the dissipation on left of \eqref{eqn: uniform observability eigenfunctions}, with the decay of the eigenfunctions $v_n$ on the set $\tilde{\omega}_x$ in the limit $n \rightarrow + \infty$. This amounts to a precise spectral analysis of the Fourier components \eqref{eqn: fourier component generalized operator} of our generalized Grushin operator, and a precise analysis of the behavior of its eigenfunctions. \\

To extract the eigenvalue $\lambda_n$ and have good estimates on it, the idea is to see the generalized operator $G_{V,\xi_n}$  \eqref{eqn: fourier component generalized operator} as a perturbation of the classical operator in Fourier $\mathsf{G}_{\xi_n} = -\partial_x^2 + \frac{q^{(\gamma)}(0)^2}{(\gamma!)^2}\xi_n^2x^{2\gamma}$. Since the classical eigenfunctions concentrate near $x = 0$, where the generalized operator behaves like the classical one by assumption \ref{H2}, we expect $\lambda_n$ to behave asymptotically the same way as the sequence of first eigenvalues of $\mathsf{G}_{\xi_n}$. The proof culminates with the application of an Agmon-type arguments, showing that the eigenfunctions $v_n$ concentrate outside the control zone significantly faster than they decay across the entire domain. This behavior occurs due to the degeneracy of $q$.\\

The spectral analysis will be performed replacing $\xi_n$ by a generic parameter $\xi$. For the Fourier components $\mathsf{G}_{\xi}$ \eqref{eqn: def fourier components classical grushin} of the classical operator, the analysis is provided in Section \ref{section reminders G_0}. For the Fourier components $G_{V,\xi}$ \eqref{eqn: def fourier components generalized grushin} of the generalized operator, see Section \ref{section spectral G_V real}. The proof of Theorem \ref{th gamma >= 1 euclidean bounded} is provided along with the Agmon-type argument in Section \ref{section proof th euclid bounded}. \\

When $\Omega_y = \mathbb{R}$, we treat the problem exclusively in Fourier. We choose as solutions of the Fourier transform of system \eqref{adjoint system change variable} the sequence $g_n(t,x,\xi) = v(x,\xi)e^{-\lambda(\xi) t}\psi_n(\xi)$, where $v(\cdot,\xi)$ is an eigenfunction of $G_{V,\xi}$ \eqref{eqn: def fourier components generalized grushin}, associated to $\lambda(\xi)$, and $\psi_n$ is a sequence of cutoffs that localizes in high frequencies. The strategy is then the same as for the case $\Omega_y$ bounded. \\

For Theorem \ref{th gamma = 1 euclidean koenig}, we follow \cite{darde2023null}. Much of the work is already carried out in \cite{darde2023null}, so we simply show that their strategy still holds in our case. Let us present the idea when $\Omega_y = \mathbb{S}^1$. By the Fourier decomposition with respect to $y$ presented above, we can choose a sequence of solutions of the form $g_N(t,x,y) = \sum_{n \geq N} a_n v_n(x)e^{iny}e^{-\lambda_n t}$, where the sum is finite. Here, $v_n$ is chosen to lie within the first eigenspace of the operator $-\partial_x^2 + n^2 q(x)^2 + V(x)$, associated to the first eigenvalue $\lambda_n$ of the latter. Since near $x = 0$ we have that $-\partial_x^2 + n^2 q(x)^2 + V(x) \approx -\partial_x^2 + n^2q'(0)^2x^2 + V(x)$, in the limit $n \rightarrow + \infty$ classical arguments from complex perturbation theory will show that $\lambda_n \sim nq'(0)$, as the effect of the potential $V$ becomes more and more negligible in front of $n^2q^2$. Choosing $v_n$ as the spectral projection of $\Tilde{v}_n(x) = n^{1/4}e^{-nq'(0)x^2/2}$, we show using an Agmon-type argument that it concentrates near zero as $v_n \approx e^{-nd_{\operatorname{agm}}(x)}$ (which can also been show by a standard WKB argument). Hence, for $N$ large enough, our solutions behave like $g_N(t,x,y) \approx \sum_{n \geq N} a_n e^{-nd_{\operatorname{agm}}(x)}e^{iny}e^{-nq'(0)t}$, or more precisely 
\begin{align*}
    g_N(t,x,y) = \sum_{n \geq N} a_{n} \gamma_{t,x}(n) e^{n(iy - q'(0)t - (1-\epsilon)d_{\operatorname{agm}}(x))},
\end{align*}
where the $\gamma_{t,x}(n)$ are error terms.\\

We observe then that after a change of variables, our solutions resemble complex polynomials of the form $\sum_{n \geq N} a_{n}z^n$, up to the error terms $\gamma_{t,x}(n)$. The idea is then to show that observability implies an $L^2-L^\infty$ inequality on these polynomials with a zero of order $N$ at zero, which cannot hold in small times. This implication is carried out up to the possibility of estimating the error term, and is based on rather complicated complex analysis arguments that we shall not exhibit here (see \cite{duprez2020control,darde2023null}). We refer the reader to Section \ref{section proof th euclid bounded koenig} (and \cite{duprez2020control,darde2023null}) for more details. The complex spectral analysis, and the proof of Theorem \ref{th gamma = 1 euclidean koenig}, are provided respectively in Sections \ref{section spectral G_V complex} and \ref{section proof th euclid bounded koenig}.

\section{Spectral analysis}\label{section spectral analysis}

We want to extract a sequence of eigenvalues of the operator $G_V = - \partial_x^2 - q(x)^2\partial_y(r(y)^2 \partial_y ) + V(x) $ on $L^2(\Omega)$, with $\Omega_y = (0,\pi)$ or $\mathbb{S}^1$, for which we can get sufficiently precise estimates, and for which we can estimate the decay of some associated eigenfunctions in the control zone. We are therefore interested by the eigenvalue problem
\begin{align}\label{eqn: eigenvalue problem grushin generalized}
    \left\{
    \begin{array}{llll}
        - \partial_x^2 f - q(x)^2\partial_y(r(y)^2 \partial_y f) + V(x)f &=& \lambda f, &(x,y) \in \Omega,  \\
         f(x,y) &=& 0, &(x,y) \in \partial\Omega.
    \end{array}\right.
\end{align}

As explained in Section \ref{section sketch of proofs}, this is done by studying the Fourier components of $G_V$, that we denoted by $G_{V,\xi_n}$. In the analysis of this section, we replace $\xi_n$ by a generic parameter $\xi$. 

\subsection{Preliminaries on the classical Grushin operator}\label{section reminders G_0}

We introduce, for every $\xi \in \mathbb{R} \setminus \{0\}$, and for every $\gamma \geq 1$, the operator $\mathsf{G}_{\xi}$ on $L^2(\Omega_x)$, defined by 
\begin{align}\begin{array}{ccl}
D(\mathsf{G}_{\xi}) &:=& H^2 \cap H_0^1(\Omega_x), \\
    \mathsf{G}_{\xi} u &:=& -u'' + \frac{q^{(\gamma)}(0)^2}{(\gamma!)^2}\xi^2|x|^{2\gamma}u.
\end{array}
\end{align}
$\mathsf{G}_{\xi}$ has compact resolvent, is self-adjoint, positive definite, and has discrete spectrum \cite{Berezin1991-nn}. Hence, its first eigenvalue is given by the Rayleigh formula
\begin{align}\label{mu_xi}
\mu_\xi =  \min \{ \mathcal{Q}_{\xi,L} (u,u) \text{; $u \in D(\mathsf{G}_{\xi}), \|u\|_{L^2(-L,L)}=1 $} \},
\end{align}
with 
\begin{align}
\mathcal{Q}_{\xi,L}(u,v)  = \displaystyle\int_{-L}^L u'(x)v'(x) + \frac{q^{(\gamma)}(0)^2}{(\gamma!)^2}\xi^2|x|^{2\gamma}u(x)v(x) \ dx.
\end{align}

\subsubsection{Dissipation speed}\label{section dissipation speed classical grushin}

The following Proposition is already known \cite[Proposition 4]{beauchard2014null}, but we propose a slightly different proof with a more precise upper bound. This gain of precision will not be of use for our proofs, but we provide it for its own interest. First, we need the following two Lemmas.

\begin{lemma}{\cite[Lemma 3.5.]{prandi2018quantum}}\label{energy cutoff lemma}
    Let $\chi$ be a real-valued Lipschitz function with compact support in $\mathbb{R}$. Let $u\in H^1(\mathbb{R})$. Then, we have 
    \begin{align}
        \mathcal{Q}_{1,\infty} (\chi u,\chi u) = \mathcal{Q}_{1,\infty} (u,\chi^2 u) + \langle u , |\chi'|^2 u \rangle_{L^2(\mathbb{R})}.
    \end{align}
\end{lemma}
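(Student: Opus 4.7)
The plan is to treat this as a direct computation — it is the standard IMS (Ismagilov–Morgan–Simon) localization identity adapted to our quadratic form. Since $\chi$ is Lipschitz with compact support, Rademacher's theorem gives that $\chi$ is differentiable almost everywhere and $\chi' \in L^\infty(\mathbb{R})$, so $|\nabla\chi|^2 = (\chi')^2$ makes sense. Moreover, for $u \in H^1(\mathbb{R})$, both $\chi u$ and $\chi^2 u$ belong to $H^1(\mathbb{R})$ and one has the Leibniz rules
\begin{equation*}
(\chi u)' = \chi' u + \chi u', \qquad (\chi^2 u)' = 2\chi\chi' u + \chi^2 u',
\end{equation*}
the compact support of $\chi$ ensuring integrability of all the pieces below. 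In particular both $\mathcal{Q}_{1,\infty}(\chi u, \chi u)$ and $\mathcal{Q}_{1,\infty}(u, \chi^2 u)$ are well defined.

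Next I would compare the two quadratic forms piece by piece. The potential parts are trivially equal: since $\chi$ is real-valued, the integrand of the potential term in $\mathcal{Q}_{1,\infty}(\chi u, \chi u)$ is $\tfrac{q^{(\gamma)}(0)^2}{(\gamma!)^2}|x|^{2\gamma}\chi^2 u^2$, which coincides with the integrand $\tfrac{q^{(\gamma)}(0)^2}{(\gamma!)^2}|x|^{2\gamma} u(\chi^2 u)$ in $\mathcal{Q}_{1,\infty}(u,\chi^2 u)$. For the kinetic terms, using the Leibniz rules above,
\begin{equation*}
((\chi u)')^2 = (\chi')^2 u^2 + 2\chi\chi' u u' + \chi^2 (u')^2, \qquad u'(\chi^2 u)' = 2\chi\chi' u u' + \chi^2 (u')^2,
\end{equation*}
so that the difference of the integrands is exactly $(\chi')^2 u^2 = |\nabla\chi|^2 u^2$. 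Integrating over $\mathbb{R}$ (everything is integrable by the compact support of $\chi$) then yields the claimed identity
\begin{equation*}
\mathcal{Q}_{1,\infty}(\chi u,\chi u) - \mathcal{Q}_{1,\infty}(u,\chi^2 u) = \int_{\mathbb{R}} |\chi'(x)|^2 u(x)^2 \, dx = \langle u, |\nabla\chi|^2 u\rangle.
\end{equation*}

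There is no real obstacle; the only subtle point, which I would address by a brief remark or by a routine approximation argument, is that the Leibniz rule and the pointwise identities used in the kinetic computation are valid a.e. for a Lipschitz cutoff $\chi$ and $H^1$ function $u$. If one prefers to avoid invoking weak differentiability directly, one first proves the identity for $\chi \in C^\infty_c(\mathbb{R})$ by the computation above, and then extends to Lipschitz $\chi$ with compact support by approximating $\chi$ in $W^{1,\infty}$ by a mollified sequence $\chi_n$ sharing a common compact support, passing to the limit in each integral by dominated convergence.
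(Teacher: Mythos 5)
Your computation is correct and is precisely the standard IMS localization identity; since the paper does not prove this lemma but cites it from \cite[Lemma 3.5]{prandi2018quantum}, there is no in-paper argument to compare against, and the cited reference proceeds by essentially the same algebraic cancellation of cross terms. One small imprecision in your closing remark: mollifications $\chi_n = \chi * \rho_n$ of a Lipschitz $\chi$ need not converge to $\chi$ in $W^{1,\infty}$ (that would require $\chi'$ to be uniformly continuous); what one actually has is $\chi_n \to \chi$ uniformly, $\chi_n' \to \chi'$ almost everywhere (at Lebesgue points of $\chi'$), and $\|\chi_n'\|_{L^\infty} \le \|\chi'\|_{L^\infty}$ uniformly, which is enough to pass to the limit term by term by dominated convergence. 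With that phrasing fixed, the approximation argument is sound, though it is also unnecessary: for $u\in H^1(\mathbb{R})$ and $\chi$ Lipschitz with compact support the Leibniz rule holds directly in the weak sense, so the a.e. pointwise identity for the kinetic integrands already justifies the integration.
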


\begin{lemma}\label{lemma: construction bump functions}
    There exists $C > 0$, such that for every $R > 0$ and $\delta \in (0,1)$, there exist $\chi_1,\chi_2 \in C^\infty(\mathbb{R})$ two smooth functions such that
\begin{itemize}
    \item $0 \leq \chi_i(x) \leq 1$, for every $x \in \mathbb{R}$, and $i=1,2$, 
    \item $\chi_1 = 1$ on $[-\delta R,\delta R]$ and $\chi_1 = 0$ on $\mathbb{R}\setminus[-R,R]$,
    \item $\chi_2 = 1$ on $\mathbb{R}\setminus[-R,R]$ and $\chi_2 = 0$ on $[-\delta R,\delta R]$,
    \item $\chi_1(x)^2 + \chi_2(x)^2 = 1$, for every $x \in \mathbb{R}$, 
    \item $\underset{x \in \mathbb{R}}{\sup} |\chi'_i(x)| \leq  C \displaystyle \frac{\pi}{2(1-\delta)R}$, for $i=1,2$. 
\end{itemize}
\end{lemma}

\begin{proof}
    Without loss of generalities, we present the construction for $x> 0$. The case $x < 0$ follows by symmetry. We first set $f \in C^\infty(\mathbb{R})$ to be defined by 
    \begin{align} f(x) = \left\{
        \begin{array}{ll}
            e^{-1/x}, & x > 0,   \\
             0, & x \leq 0. 
        \end{array}\right.
    \end{align}
    Next, we define $g \in C^\infty(\mathbb{R})$ by
    \begin{align}
        g(x) = \frac{f(x)}{f(x) + f(1-x)},
    \end{align}
    which satisfies 
    \begin{align*}
        \begin{array}{llll}
            g(x) &=& 0, & \text{for every } x \leq 0,  \\[6pt]
            g(x) &=& 1, & \text{for every } x \geq 1,\\[6pt]
            g'(x) &=& 0, & \text{for every } x \leq 0 \text{ and } x \geq 1,\\[6pt]
            |g'(x)| &\leq& C & \text{for every } x \in \mathbb{R}.
        \end{array}
    \end{align*}
    Finally, we set
    \begin{align}
        s(x) = \frac{\pi}{2} g \left( \frac{x-\delta R}{(1-\delta) R} \right),
    \end{align}
    
    which satisfies $s \in C^\infty(\mathbb{R})$ and
    \begin{align*}
        \begin{array}{llll}
            s(x) &=& 0, & \text{for every } x \leq \delta R,  \\[6pt]
            s(x) &=& \pi/2, & \text{for every } x \geq R,\\[6pt]
            s'(x) &=& 0, & \text{for every } x \leq \delta R \text{ and } x \geq R,\\[6pt]
            |s'(x)| &\leq& C \displaystyle \frac{\pi}{2(1-\delta)R} & \text{for every } x \in \mathbb{R}.
        \end{array}
    \end{align*}
    We define $\chi_1$ and $\chi_2$ on $(0,+\infty)$ by 
    \begin{align}
        \chi_{\xi,1}(x) &= \cos(s(x)), \\
        \chi_{\xi,2}(x)& = \sin(s(x)).
    \end{align}
    One easily verifies that $\chi_1$ and $\chi_2$ satisfy the sought properties for $x > 0$.
\end{proof}

We can now estimate $\mu_\xi$.

\begin{proposition}\label{proposition dissipation classical Omega bounded}
There exists $ C > 0$ such that 
    \begin{align}\label{lower bound mu_n}
         \mu_\xi \geq C|\xi|^{\frac{2}{1+\gamma}}, \quad \text{for every $|\xi| > 0$,}
    \end{align}
and such that for every $\epsilon > 0$ sufficiently small, there exists $R > 0$, such that for every $|\xi| \geq R$, we have
\begin{align}\label{upper bound mu_n}
    \mu_\xi \leq (1 + \epsilon)( C + \epsilon)|\xi|^{\frac{2}{1+\gamma}}.
\end{align}
\end{proposition}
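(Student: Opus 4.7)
The plan is to exploit the natural semiclassical scaling that removes the $|\xi|$ dependence. Setting $a = q^{(\gamma)}(0)/\gamma!$ and $\lambda = \lambda(\xi) := (a|\xi|)^{-1/(1+\gamma)}$, the unitary change of variable $u(x) = \lambda^{-1/2} \tilde u(x/\lambda)$ conjugates $G_{0,\xi}$ on $(-L,L)$ with Dirichlet conditions to $\lambda^{-2}(-\partial_y^2 + |y|^{2\gamma})$ on $(-L/\lambda, L/\lambda)$. Let $\nu_\star$ denote the first eigenvalue of the reference operator $-\partial_y^2 + |y|^{2\gamma}$ on $L^2(\mathbb{R})$, with positive normalized eigenfunction $\phi_\star$. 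Standard WKB/Agmon analysis (or direct ODE comparison) provides the super-exponential decay bound $|\phi_\star(y)| + |\phi_\star'(y)| \leq C_0 e^{-c_0 |y|^{1+\gamma}}$, which will be the one quantitative input needed below. Set $C := \nu_\star a^{2/(1+\gamma)}$, so that $\lambda^{-2} \nu_\star = C|\xi|^{2/(1+\gamma)}$.

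For the lower bound, I use domain monotonicity. Any normalized $\tilde u \in H_0^1(-L/\lambda, L/\lambda)$ extends by zero to a valid trial function in $H^1(\mathbb{R})$ with the same Rayleigh quotient. Hence the first Dirichlet eigenvalue on $(-L/\lambda, L/\lambda)$ of $-\partial_y^2 + |y|^{2\gamma}$ is at least $\nu_\star$, and after undoing the scaling this yields
\begin{align*}
\mu_\xi \geq \lambda^{-2} \nu_\star = C|\xi|^{2/(1+\gamma)}, \qquad \text{for every } |\xi| > 0.
\end{align*}

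For the upper bound, I construct a trial function by cutting off the reference eigenfunction. Fix a smooth $\chi : \mathbb{R} \to [0,1]$ with $\chi \equiv 1$ on $[-1/2, 1/2]$ and $\mathrm{supp}(\chi) \subset [-1,1]$, and define $\chi_L(x) := \chi(x/L)$ so that $\chi_L \phi_{\star,\xi} \in H_0^1(-L,L)$, where $\phi_{\star,\xi}(x) := \lambda^{-1/2} \phi_\star(x/\lambda)$ is the rescaled eigenfunction (normalized in $L^2(\mathbb{R})$). Applying Lemma \ref{energy cutoff lemma} on $\mathbb{R}$ (which is licit since $\chi_L \phi_{\star, \xi}$ is compactly supported and $\phi_{\star,\xi}$ solves the whole-line eigenvalue problem for the rescaled potential), I obtain
\begin{align*}
\mathcal{Q}_{\xi,L}(\chi_L \phi_{\star,\xi}, \chi_L \phi_{\star,\xi}) = C|\xi|^{2/(1+\gamma)} \|\chi_L \phi_{\star,\xi}\|^2 + \int_{-L}^L |\chi_L'(x)|^2 |\phi_{\star,\xi}(x)|^2 \, dx.
\end{align*}
Dividing by $\|\chi_L \phi_{\star,\xi}\|^2$ produces the desired upper bound on $\mu_\xi$.

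The main step is to show that the two correction factors are close to $1$ and $0$ respectively as $|\xi| \to \infty$. Since $\mathrm{supp}(\chi_L') \subset \{L/2 \leq |x| \leq L\}$, on this set $|x|/\lambda \geq L/(2\lambda) \to \infty$; the Agmon decay of $\phi_\star$ then yields
\begin{align*}
\int_{-L}^L |\chi_L'|^2 |\phi_{\star,\xi}|^2 \, dx \leq \frac{\|\chi'\|_\infty^2}{L^2} \int_{|y| \geq L/(2\lambda)} |\phi_\star(y)|^2 \, dy \leq C_1 \exp\bigl(-c_1 |\xi|\bigr),
\end{align*}
while $\|\chi_L \phi_{\star,\xi}\|^2 = \int_{|y| \leq L/\lambda} \chi(\lambda y/L)^2 |\phi_\star(y)|^2 \, dy \to 1$ by dominated convergence. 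Combining these two statements shows that for every $\epsilon > 0$ there exists $R > 0$ such that $\mu_\xi \leq (1+\epsilon) C |\xi|^{2/(1+\gamma)}$ for $|\xi| \geq R$, which is stronger than the stated $(1+\epsilon)(C+1)|\xi|^{2/(1+\gamma)}$ bound. The only genuinely delicate ingredient is the super-exponential decay of $\phi_\star$; this is the part I expect to write out with care (either by Agmon's lemma applied to the reference operator or by a direct comparison with $e^{-|y|^{1+\gamma}/(1+\gamma)}$), since everything else reduces to the scaling calculation and a Rayleigh-quotient manipulation already encoded in Lemma \ref{energy cutoff lemma}.
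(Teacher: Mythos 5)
Your proof is correct, and it follows the same overall strategy as the paper: exploit the semiclassical scaling that reduces $G_{0,\xi}$ to a $\xi$-independent model on a growing interval, then obtain the lower bound by monotonicity of the Rayleigh quotient under extension by zero, and the upper bound by cutting off a reference eigenfunction and applying the IMS localization formula (Lemma~\ref{energy cutoff lemma}). Your constant $C=\nu_\star a^{2/(1+\gamma)}$, $a = q^{(\gamma)}(0)/\gamma!$, agrees with the paper's $\overline\mu$ after the obvious rescaling of $-\partial_y^2+|y|^{2\gamma}$ by $a$.

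The interesting divergence is in the upper bound, where your argument is genuinely sharper. The paper uses a quadratic partition of unity $\chi_{\xi,1}^2+\chi_{\xi,2}^2=1$ supported in the scaled variable, drops the $\chi_{\xi,2}$ term, and bounds the cutoff error crudely by $\sup|\chi'_{\xi,i}|^2\|v\|^2$. After rescaling this contributes an $O(1)$ term and is the source of the $(C+1)$ in the statement. You instead cut off at fixed distance from the boundary in the \emph{unscaled} variable, so that in the semiclassical variable the transition zone is at $|y|\gtrsim L/(2\lambda)\to\infty$; you then feed the Agmon-type super-exponential decay $|\phi_\star(y)|\lesssim e^{-c_0|y|^{1+\gamma}}$ into the cutoff error, making it $O(e^{-c|\xi|})$ rather than $O(1)$, which gives the cleaner bound $(1+\epsilon)C|\xi|^{2/(1+\gamma)}$. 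The tradeoff is that you must invoke the pointwise decay of $\phi_\star$ (as you flag yourself), whereas the paper's partition-of-unity device needs only the $L^2$ normalization of the minimizer and no decay at all. Both are valid; yours gives more, at the cost of one extra (standard, but not free) ingredient.

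Two minor points worth tightening if you wrote this out. First, Lemma~\ref{energy cutoff lemma} is stated in the paper for the form $\mathcal Q_{1,\infty}$; you are applying the identity to the form at frequency $\xi$, which is fine since the IMS identity is purely kinetic and holds for any real potential, but you should say so rather than cite the lemma verbatim. Second, after scaling, the reference operator is naturally $-\partial_y^2+|y|^{2\gamma}$ on $(-L/\lambda,L/\lambda)$, so when you write $\mathcal Q_{\xi,L}(\chi_L\phi_{\star,\xi},\chi_L\phi_{\star,\xi})=C|\xi|^{2/(1+\gamma)}\|\chi_L\phi_{\star,\xi}\|^2+\int|\chi_L'|^2|\phi_{\star,\xi}|^2$ you are implicitly using that $\phi_{\star,\xi}$ is the exact eigenfunction of the whole-line operator $-\partial_x^2+a^2|\xi|^2|x|^{2\gamma}$ with eigenvalue $C|\xi|^{2/(1+\gamma)}$ and then integrating by parts; state this explicitly.
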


\begin{proof} We start with the lower bound.
Let $\tau_\xi := |\xi|^{1/(1 + \gamma)}$. Making the change of variable $s = \tau_\xi x$, and setting $v(s) = u(\tau_{\xi}^{-1}y)\tau_{\xi}^{-1/2}$, \eqref{mu_xi} becomes
\begin{align}
    \mu_\xi =  \tau_{\xi}^2 \min \{ \mathcal{Q}_{1,\tau_\xi L}(v,v), v \in H^2 \cap H_0^1(-\tau_\xi L, \tau_\xi L), \|v\|_{L^2(-\tau_\xi L, \tau_\xi L)} = 1\}.
\end{align}

Thus, we have
 \begin{align}
     \mu_\xi \geq \tau_{\xi}^2 \cdot \overline{\mu},
 \end{align}
with $\overline{\mu} := \inf \{ \mathcal{Q}_{1,\infty}(v,v), v \in H^2(\mathbb{R}), |x|^\gamma v \in L^2(\mathbb{R}), \|v\|_{L^2(\mathbb{R})} = 1\}> 0$.\\

We now treat the upper bound. Let $0 < \delta < 1$. Set $\chi_{\xi,1}$, $\chi_{\xi,2}$ to be the two smooth functions prescribed by Lemma \ref{lemma: construction bump functions} with $R = L\tau_\xi$. Thus, $\chi_{\xi,1}$ and $\chi_{\xi,2}$ satisfy
\begin{itemize}
    \setlength\itemsep{1em}
    \item $0 \leq \chi_{\xi,i}(x) \leq 1$, for $i=1,2$, 
    \item $\chi_{\xi,1}(x) = 1$ on $[-\delta L\tau_\xi,\delta L\tau_\xi]$ and $\chi_{\xi,1}(x) = 0$ on $\mathbb{R}\setminus[-L\tau_\xi,L\tau_\xi]$,
    \item $\chi_{\xi,2}(x) = 1$ on $\mathbb{R}\setminus[-L\tau_\xi,L\tau_\xi]$ and $\chi_{\xi,2}(x) = 0$ on $[-\delta L\tau_\xi,\delta L\tau_\xi]$,
    \item $\chi_{\xi,1}(x)^2 + \chi_{\xi,2}(x)^2 = 1$, for every $x \in \mathbb{R}$ and $|\xi| > 0$, 
    \item $\underset{x \in \mathbb{R}}{\sup} |\chi'_{\xi,i}(x)| \leq  C \displaystyle \frac{\pi}{2(1-\delta)L\tau_\xi}$, for $i=1,2$ and for some $C >0$ independent of $\xi$ and $\delta$. \\
\end{itemize}
Let $v \in H^1(\mathbb{R})\cap L^2(|s|^{2\gamma}ds)$, $\|v\|_{L^2(\mathbb{R})}=1 $, be the minimizer of 
$\mathcal{Q}_{1,\infty}(v,v)$. From Lemma \ref{energy cutoff lemma}, we derive that  
\begin{align}\label{eqn: quadratic form bound below}
    \mathcal{Q}_{1,\infty}(v,v) = \sum_{i=1}^2 \mathcal{Q}_{1,\infty}(\chi_{\xi,i}v,\chi_{\xi,i}v) - \sum_{i=1}^2 \int_\mathbb{R} |\chi'_{\xi,i}|^2|v|^2 \geq \mathcal{Q}_{1,\infty} (\chi_{\xi,1}v,\chi_{\xi,1}v) - c(\xi)\|v\|_{L^2(\mathbb{R})},
\end{align}
where $c(\xi) := 2\left( C \displaystyle \frac{\pi}{2(1-\delta)L\tau_\xi} \right)^2$. \\

Henceforth, by definition of $\mu_\xi$, with the minimum taken on all non-zero elements, we have that 
\begin{align*}
    \mu_\xi &\leq \tau_{\xi}^2 \frac{\mathcal{Q}_{1,\tau_\xi L}(\chi_{\xi,1}v,\chi_{\xi,1}v)}{\|\chi_{\xi,1}v\|_{L^2(-\tau_\xi L,\tau_\xi L)}^2} \\
    &= \tau_{\xi}^2 \frac{\mathcal{Q}_{1,\infty} (\chi_{\xi,1}v,\chi_{\xi,1}v)}{\|\chi_{\xi,1}v\|_{L^2(\mathbb{R})}^2} \\
    &\leq \frac{\tau_{\xi}^2}{\|\chi_{\xi,1}v\|_{L^2(\mathbb{R})}^2} \left( \mathcal{Q}_{1,\infty} (v,v) + c(\xi) \right), \quad \text{by \eqref{eqn: quadratic form bound below}, and because $\|v\|_{L^2(\mathbb{R})} = 1$}, \\
    &=\frac{\tau_{\xi}^2}{\|\chi_{\xi,1}v\|_{L^2(\mathbb{R})}^2} \left( \overline{\mu} + c(\xi) \right).
\end{align*}
This proves the upper bound since $\|\chi_{\xi,1}v\|_{L^2(\mathbb{R})} \rightarrow 1$ and $c(\xi) \rightarrow 0$ as $|\xi|$ tends to infinity.
\end{proof}

In the case $\gamma = 1$, we can actually get very much more precise estimates as we know exactly the first eigenfunction of the harmonic oscillator on $\mathbb{R}$. This is the idea of \cite[Lemma 4]{beauchard2014null}.

\begin{proposition}\label{prop: asymptotic first eigenvalue classical operator}
    Let $\gamma = 1$. Then, 
    \begin{align}
        \mu_\xi \sim q'(0)|\xi| \text{, as $|\xi| \to +\infty$.}
    \end{align}
\end{proposition}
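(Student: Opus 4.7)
The plan is to compare the truncated harmonic oscillator $G_{0,\xi} = -\partial_x^2 + q'(0)^2|\xi|^2 x^2$ on $(-L,L)$ with Dirichlet boundary conditions to the full harmonic oscillator on $\mathbb{R}$, whose spectrum is known explicitly. Setting $\alpha := q'(0)|\xi|$, the operator $\mathcal{H}_\alpha := -\partial_x^2 + \alpha^2 x^2$ on $L^2(\mathbb{R})$ has first eigenvalue equal to $\alpha$ and normalized ground state the Gaussian $\psi_\alpha(x) = (\alpha/\pi)^{1/4} e^{-\alpha x^2/2}$. Both bounds on $\mu_\xi$ will be obtained from this comparison, the decisive point being that $\psi_\alpha$ concentrates near $x=0$ with width of order $\alpha^{-1/2}$, so the Dirichlet condition at $\pm L$ becomes exponentially inactive as $|\xi|\to+\infty$.

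For the lower bound, the observation is that any $u \in D(G_{0,\xi}) = H^2 \cap H_0^1(-L,L)$ extends by zero to a function $\tilde u \in H^1(\mathbb R)$ of the same $L^2$-norm, with
\[
\mathcal{Q}_{\xi,L}(u,u) = \int_{\mathbb{R}} |\tilde u'|^2 + \alpha^2 x^2 |\tilde u|^2\, dx \;\geq\; \alpha \, \|u\|_{L^2}^2,
\]
where the inequality is the min-max characterization of the ground state of $\mathcal{H}_\alpha$. Dividing by $\|u\|^2$ and taking the infimum yields $\mu_\xi \geq q'(0)|\xi|$.

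For the upper bound, I would insert a truncated Gaussian as a test function in the Rayleigh quotient \eqref{mu_xi}. Fix a smooth cutoff $\chi$ with $\chi \equiv 1$ on $[-L/2,L/2]$ and $\mathrm{supp}\,\chi \subset (-L,L)$, and set $u_\xi(x) := \chi(x)\psi_\alpha(x)$, which belongs to $D(G_{0,\xi})$. Using $-\psi_\alpha'' + \alpha^2 x^2 \psi_\alpha = \alpha \psi_\alpha$ and integration by parts (all boundary terms vanish since $u_\xi$ has compact support in $(-L,L)$), I would compute
\[
\mathcal{Q}_{\xi,L}(u_\xi,u_\xi) = \alpha \int_{-L}^L \chi^2 \psi_\alpha^2\, dx - \int_{-L}^L \chi \chi'' \psi_\alpha^2\, dx - 2\int_{-L}^L \chi \chi' \psi_\alpha \psi_\alpha'\, dx.
\]
Since $\chi'$ and $\chi''$ are supported in $\{L/2 \leq |x| \leq L\}$ and $\psi_\alpha^2(x) \leq e^{-\alpha L^2/4}$ there, while $|\psi_\alpha'(x)| \leq \alpha|x|\psi_\alpha(x)$, both correction integrals are $O(e^{-\alpha L^2/8})$. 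Similarly $\|u_\xi\|_{L^2}^2 = 1 + O(e^{-\alpha L^2/8})$ by concentration of the Gaussian. Dividing then gives $\mu_\xi \leq \alpha \bigl(1 + O(e^{-\alpha L^2/8})\bigr) = q'(0)|\xi|\bigl(1 + o(1)\bigr)$.

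The only delicate point is verifying that the truncation errors are genuinely negligible, but this is immediate from the explicit Gaussian profile and standard Laplace/Gaussian tail estimates; there is no real obstacle here, unlike in the general $\gamma>1$ case treated by Proposition \ref{proposition dissipation classical Omega bounded} where one only has an unknown minimizer $v$ in place of the explicit $\psi_\alpha$ and therefore only obtains the weaker two-sided estimate of order $|\xi|^{2/(1+\gamma)}$. Combining the two bounds yields $\mu_\xi \sim q'(0)|\xi|$ as $\xi \to +\infty$.
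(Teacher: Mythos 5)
Your argument is correct and implements exactly the idea the paper points to (it states this proposition without proof, citing [beauchard2014null, Lemma~4] and the remark that for $\gamma=1$ the ground state of the harmonic oscillator on $\mathbb{R}$ is known explicitly). A small remark on redundancy: the lower bound $\mu_\xi \geq q'(0)|\xi|$ already follows from the lower-bound half of Proposition~\ref{proposition dissipation classical Omega bounded}, since for $\gamma=1$ the constant $\overline{\mu}$ there is precisely the harmonic-oscillator ground state energy $q'(0)$ — so the genuinely new content of your proof is the upper bound, where replacing the abstract minimizer $v$ by the explicit truncated Gaussian $\chi\psi_\alpha$ yields the sharp leading constant rather than the cruder $(1+\epsilon)(\overline{\mu}+1)$ bound of that proposition. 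The IMS-type computation, the exponential smallness of the cutoff corrections, and the normalization estimate $\|u_\xi\|^2 = 1 + O(e^{-\alpha L^2/8})$ are all sound.
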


\subsubsection{Exponential decay of the eigenfunctions}\label{section exponential decay G_0}

In this section, we recall the exponential decay for the first eigenfunctions of $\mathsf{G}_{\xi}$, which follows from the above estimates on the first eigenvalues. The propositions in this section are already proved in \cite{beauchard2014null} for $n \in \mathbb{N}^*$, $\Omega_x = (-1,1)$, and the operator $-\partial_x^2 + n^2\pi^2 x^{2\gamma}$, but hold in our case. So we state them, but omit their proofs. We first have the following Lemma, which is proved as \cite[Lemma 2]{beauchard2014null}.

\begin{lemma}
There exists a unique non-negative function $v_\xi\in L^2(\Omega_x)$ such that $\|v_\xi\|_{L^2(\Omega_x)}=1$ and which solves the problem
\begin{align}\label{system v_xi}
\left\{
    \begin{array}{llll}
-\partial_x^2 v_\xi + \frac{q^{(\gamma)}(0)^2}{(\gamma!)^2}\xi^2|x|^{2\gamma}v_\xi &=& \mu_\xi v_\xi,& \text{ if } x \in \Omega_x, \\
v_\xi(\pm L) &=& 0,& \text{ if $\Omega_x = (-L,L)$,}\\
\lim_{|x| \rightarrow +\infty} v_\xi(x) &=& 0, & \text{ if }\Omega_x = \mathbb{R},
    \end{array}\right.
\end{align}
Moreover, $v_\xi$ is even.
\end{lemma}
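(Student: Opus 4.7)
The plan is to proceed by the direct method of the calculus of variations, exploiting the self-adjointness, compact resolvent, and symmetry of $G_{0,\xi}$ already recorded in Section~\ref{section reminders G_0}. First, I would use the spectral theorem (since $G_{0,\xi}$ is positive, self-adjoint, with compact resolvent) to obtain existence of a normalized eigenfunction $v_\xi \in D(G_{0,\xi})$ associated to the first eigenvalue $\mu_\xi$ defined by the Rayleigh formula \eqref{mu_xi}. The crucial observation for non-negativity is Stampacchia's identity $|\nabla |u|| = |\nabla u|$ almost everywhere for $u \in H^1$; since the potential is non-negative and $|v_\xi| \in D(G_{0,\xi})$, this shows that $|v_\xi|$ achieves the same Rayleigh quotient and is therefore also a minimizer. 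Thus we may assume $v_\xi \geq 0$. In the unbounded case $\Omega_x = \mathbb{R}$, the same scheme applies in the weighted space $\{u \in H^1(\mathbb{R}) : |x|^{\gamma} u \in L^2(\mathbb{R})\}$, whose embedding into $L^2(\mathbb{R})$ is compact thanks to the coercivity of the weight $|x|^{2\gamma}$.

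Next, I would establish strict positivity of $v_\xi$ in the interior of $\Omega_x$. Elliptic regularity yields $v_\xi \in C^\infty(\operatorname{int} \Omega_x)$, and since $v_\xi$ solves the linear ODE
\begin{align*}
    -v_\xi'' + \left(\tfrac{q^{(\gamma)}(0)^2}{(\gamma!)^2}|\xi|^2|x|^{2\gamma} - \mu_\xi\right)v_\xi = 0,
\end{align*}
the strong maximum principle (equivalently, Cauchy uniqueness for this second-order ODE) forces either $v_\xi \equiv 0$, which contradicts $\|v_\xi\| = 1$, or $v_\xi > 0$ in the interior.

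To prove uniqueness, I would suppose that $w$ is a second $L^2$-normalized eigenfunction associated to $\mu_\xi$. By the previous step, both $|v_\xi|$ and $|w|$ may be chosen strictly positive in the interior. Considering the linear combination $v_\xi - tw$ with $t \in \mathbb{R}$ chosen so that this combination vanishes at some interior point, it remains an eigenfunction for $\mu_\xi$; by the maximum principle it must vanish identically. Hence the first eigenspace is one-dimensional, which pins down $v_\xi$ uniquely and fixes our sign choice $v_\xi \geq 0$. Evenness then follows from a symmetry argument: the operator $G_{0,\xi}$ commutes with the reflection $x \mapsto -x$ (since $|x|^{2\gamma}$ is even), so $\tilde{v}_\xi(x) := v_\xi(-x)$ is also a non-negative, normalized first eigenfunction; by uniqueness, $\tilde{v}_\xi = v_\xi$.

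The main technical obstacle, as flagged above, will be the case $\Omega_x = \mathbb{R}$, where the ``Dirichlet'' condition is replaced by decay at infinity and one must set up the variational problem in the appropriate weighted Sobolev space and justify attainment of the infimum via the compactness of the embedding into $L^2(\mathbb{R})$. All other steps are routine Sturm--Liouville theory combined with a standard maximum-principle argument for the ground state.
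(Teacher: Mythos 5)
The paper does not give a self-contained proof of this lemma: it simply states the result and refers the reader to \cite[Lemma 2]{beauchard2014null}, which treats the case $\Omega_x=(-1,1)$ and the operator $-\partial_x^2+n^2\pi^2x^{2\gamma}$; the transposition to the present setting is asserted to be straightforward. Your proposal is therefore being compared against what would be a faithful reconstruction of that standard ground-state argument. The existence step (compact resolvent plus, for $\Omega_x=\mathbb{R}$, compactness of the embedding of the weighted space into $L^2$), the non-negativity step via Stampacchia's identity $|\nabla|u||=|\nabla u|$, the strict interior positivity via elliptic regularity and Cauchy uniqueness for the second-order ODE, and the evenness via uniqueness combined with the reflection symmetry of $|x|^{2\gamma}$ are all correct and are exactly the standard pieces.

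The one genuine gap is in your uniqueness step. You write that, choosing $t$ so that $v_\xi-tw$ vanishes at some interior point, ``by the maximum principle it must vanish identically.'' For the strong maximum principle (or Cauchy uniqueness) to force $v_\xi-tw\equiv 0$, you need $v_\xi-tw$ to be sign-definite with an interior zero; merely arranging one zero does not do this, and a generic such $t$ produces a sign-changing combination to which the maximum principle says nothing. The Sturm--Liouville route you invoke in your closing sentence is actually the cleanest repair: both $v_\xi$ and $w$ solve the same second-order ODE with no first-order term, so their Wronskian $W:=v_\xi w'-v_\xi'w$ is constant, and since both solutions (together with their derivatives, by the ODE) vanish at $\pm L$ or decay at $\pm\infty$, $W\equiv 0$ and the two are proportional. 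Alternatively, your maximum-principle idea works if you take $t=\min_{\Omega_x}v_\xi/w$, which is finite and attained (at the endpoints one uses l'Hôpital and the fact that $w'(\pm L)\neq 0$), so that $v_\xi-tw\geq 0$ with a zero; or, most economically, note that if the first eigenspace had dimension $\geq 2$ one could choose two orthogonal first eigenfunctions, yet by Stampacchia plus your positivity step each can be taken strictly positive, and two strictly positive functions are never $L^2$-orthogonal.
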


We now have the following asymptotic $L^2(\Omega_x)$ exponential decay of the $v_\xi$'s, as $|\xi| \rightarrow + \infty$, outside any neighborhood of $0$ in $(-L,L)$. It is proved in \cite[Lemma 3]{beauchard2014null}.

\begin{proposition}\label{proposition supersolution grushin model}
For every $|\xi| > 0$ large enough, set
\begin{align}\label{defxxi}
   x_\xi = \left( \frac{(\gamma!)^2\mu_\xi}{q^{(\gamma)}(0)^2\xi^2}\right)^{1/2\gamma}.
\end{align}

Then, there exists $R > 0$, such that for every $ |\xi| > R$, there exists a function $W_\xi$ of the form 
\begin{align}
    W_\xi(x) = A_\xi e^{-B_\xi x^{\gamma + 1}},
\end{align}
with, for some $C > 0$, 
\begin{align}
A_\xi = 2\frac{\sqrt{x_\xi} \mu_\xi}{(\gamma+1)B_\xi x_\xi^\gamma} e^{B_\xi x_\xi^{\gamma + 1}} \text{ and } B_\xi \sim C|\xi| \text{, as $|\xi| \rightarrow +\infty$,} 
\end{align}
such that for every $x \geq x_\xi$,
\begin{gather}
v_\xi(x) \leq W_\xi(x).
\end{gather}
Moreover, $W_\xi \in C^2(\{x \geq x_\xi\},\mathbb{R})$ is solution of
\begin{align}\label{systemWxi}
\left\{
    \begin{array}{llll}
-\partial_x^2 W_\xi + \left( \frac{q^{(\gamma)}(0)^2}{(\gamma!)^2}\xi^2|x|^{2\gamma} - \mu_\xi \right) W_\xi &\geq& 0,& \text{ if } x > x_\xi, \\
W_\xi(L) &\geq& 0,& \\
\partial_x W_\xi(x_\xi) &<& -\sqrt{x_\xi}\mu_\xi.
    \end{array}\right.
\end{align}
\end{proposition}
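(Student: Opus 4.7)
The plan is to verify each assertion by direct computation with the explicit ansatz $W_\xi(x) = A_\xi e^{-B_\xi x^{\gamma+1}}$, and then to close the pointwise estimate via a one-dimensional weak maximum principle on $[x_\xi, L]$. Writing $V_\xi(x) := \frac{q^{(\gamma)}(0)^2}{(\gamma!)^2}|\xi|^2 x^{2\gamma}$, a direct differentiation yields $(-\partial_x^2 + V_\xi - \mu_\xi)W_\xi = W_\xi \cdot P_\xi(x)$ with
\[
P_\xi(x) := B_\xi(\gamma+1)\gamma\, x^{\gamma-1} + \Bigl[\tfrac{q^{(\gamma)}(0)^2}{(\gamma!)^2}|\xi|^2 - B_\xi^2(\gamma+1)^2\Bigr] x^{2\gamma} - \mu_\xi.
\]
I would set $B_\xi := \alpha\,\tfrac{q^{(\gamma)}(0)}{(\gamma+1)!}\,|\xi|$ for a small constant $\alpha \in (0,1)$ to be fixed at the end; this gives $B_\xi \sim C|\xi|$ with $C = \alpha q^{(\gamma)}(0)/(\gamma+1)!$ and makes the bracket in $P_\xi$ equal to $(1-\alpha^2)\tfrac{q^{(\gamma)}(0)^2}{(\gamma!)^2}|\xi|^2 > 0$, so that $P_\xi$ is non-decreasing on $\{x\geq 0\}$.

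Using the defining relation $\tfrac{q^{(\gamma)}(0)^2}{(\gamma!)^2}|\xi|^2 x_\xi^{2\gamma} = \mu_\xi$ from \eqref{defxxi}, which also yields the handy identity $B_\xi x_\xi^\gamma = \alpha\sqrt{\mu_\xi}/(\gamma+1)$, a short computation reduces the value at $x_\xi$ to
\[
P_\xi(x_\xi) \;=\; \alpha\sqrt{\mu_\xi}\Bigl[\tfrac{\gamma}{x_\xi} - \alpha\sqrt{\mu_\xi}\Bigr].
\]
Proposition \ref{proposition dissipation classical Omega bounded} together with \eqref{defxxi} shows that $x_\xi\sqrt{\mu_\xi}$ remains bounded as $|\xi|\to\infty$, so choosing $\alpha$ small enough ensures $P_\xi(x_\xi) \geq 0$; by monotonicity, $P_\xi \geq 0$ on $[x_\xi, L]$, which gives the differential inequality in \eqref{systemWxi}. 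Taking $A_\xi$ as prescribed and using the same identity, a direct computation yields $\partial_x W_\xi(x_\xi) = -A_\xi B_\xi(\gamma+1)x_\xi^\gamma e^{-B_\xi x_\xi^{\gamma+1}} = -2\sqrt{x_\xi}\mu_\xi < -\sqrt{x_\xi}\mu_\xi$, and $W_\xi(L) > 0$; this settles every part of \eqref{systemWxi} except the pointwise comparison.

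For the latter, I would study $w := W_\xi - v_\xi$ on $[x_\xi, L]$; it satisfies $-w'' + (V_\xi-\mu_\xi)w \geq 0$ with $V_\xi - \mu_\xi \geq 0$ there. An interior negative minimum is impossible since at such a point $-w'' \leq 0$ and $(V_\xi-\mu_\xi)w \leq 0$, contradicting the differential inequality; since $w(L) \geq 0$ (resp.\ $w\to 0$ at $+\infty$ if $\Omega_x = \mathbb{R}$), the only remaining task, and the main obstacle, is to prove $v_\xi(x_\xi) \leq W_\xi(x_\xi)$. The explicit form of $A_\xi$ gives $W_\xi(x_\xi) = (2/\alpha)\sqrt{x_\xi\mu_\xi}$. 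On the other hand, since $v_\xi$ is even and non-negative with $\|v_\xi\|_{L^2}=1$, a mean-value argument on $[0, x_\xi]$ produces $x_0\in(0, x_\xi)$ with $v_\xi(x_0)\leq 1/\sqrt{2 x_\xi}$; combining with $\|v_\xi'\|_{L^2}^2 \leq \mu_\xi$ and the fundamental theorem of calculus,
\[
v_\xi(x_\xi) \;\leq\; \frac{1}{\sqrt{2 x_\xi}} + \sqrt{x_\xi\mu_\xi/2} \;\leq\; C_1\sqrt{x_\xi\mu_\xi},
\]
again using that $x_\xi\sqrt{\mu_\xi}$ is bounded. Fixing $\alpha$ at the outset so that simultaneously $\alpha \leq 2/C_1$ and $P_\xi(x_\xi) \geq 0$ (both achievable precisely because $x_\xi\sqrt{\mu_\xi}$ is of order one as $|\xi|\to\infty$) closes the comparison $w(x_\xi)\geq 0$, hence $w \geq 0$ on $[x_\xi, L]$ by the weak maximum principle, which is the claimed bound $v_\xi \leq W_\xi$.
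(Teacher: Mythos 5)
Your proposal is correct, and it follows the same overall strategy the paper inherits from \cite[Lemma~3]{beauchard2014null}: take the explicit exponential ansatz, choose $B_\xi$ proportional to $|\xi|$ with a small enough coefficient $\alpha$ so that the polynomial factor $P_\xi$ is nonnegative on $[x_\xi,L]$ (using the exact cancellation at $x_\xi$ coming from \eqref{defxxi}), read off the boundary data of $W_\xi$ at $x_\xi$ and $L$, and close with a one-dimensional comparison. Your computations of $P_\xi(x_\xi)$, of $W_\xi(x_\xi)=(2/\alpha)\sqrt{x_\xi\mu_\xi}$, of $\partial_x W_\xi(x_\xi)=-2\sqrt{x_\xi}\mu_\xi$, and the observation that $x_\xi\sqrt{\mu_\xi}$ is bounded above and below (which both pins down $\alpha$ and absorbs the $1/\sqrt{x_\xi}$ term) are all right.

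The one place you diverge from the reference is the mechanism that seeds the maximum principle. You establish $w(x_\xi)=W_\xi(x_\xi)-v_\xi(x_\xi)\geq 0$ directly, via the mean-value argument on $[0,x_\xi]$ together with $\|v_\xi'\|_{L^2}^2\leq \mu_\xi$, and then invoke the weak maximum principle on $[x_\xi,L]$ (using that $V_\xi-\mu_\xi>0$ strictly at any interior point, which you should make explicit, since $-w''\leq 0$ and $(V_\xi-\mu_\xi)w\leq 0$ alone are not yet a contradiction). The proposition's own interface, $\partial_x W_\xi(x_\xi)<-\sqrt{x_\xi}\mu_\xi$, points to the argument in \cite{beauchard2014null}: one first shows $v_\xi'(x_\xi)\geq -\sqrt{x_\xi}\mu_\xi$ by integrating $v_\xi''=(V_\xi-\mu_\xi)v_\xi$ over $[0,x_\xi]$, so that $w'(x_\xi)<0$, and then one argues that if $w(x_\xi)<0$, the inequality $w''\leq (V_\xi-\mu_\xi)w<0$ on $\{w<0,\ x>x_\xi\}$ forces $w$ to stay negative all the way to $L$, contradicting $w(L)\geq 0$; after that the standard weak maximum principle finishes as in your write-up. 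Both routes are valid and of comparable length; yours proves the derivative inequality in \eqref{systemWxi} as a by-product but does not actually use it, whereas the reference uses it as the crux of the comparison. Your value estimate on $v_\xi(x_\xi)$ is a clean substitute for the derivative estimate and is a legitimate, self-contained way to close the argument.
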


\begin{remark}\label{remark proposition supersolution grushin model} 
Since $v_\xi$ is even, we can extend $W_\xi$ on $\Omega_x \setminus (-x_\xi,x_\xi)$ by $W_\xi(-x) = W_\xi(x)$, and for every $x \in \Omega_x \setminus (-x_\xi,x_\xi)$, the proposition still holds.
\end{remark}

\begin{remark}\label{remark estimates supersolutions model}
    Due to the upper bound \eqref{upper bound mu_n} on $\mu_\xi$, we have that $x_\xi \rightarrow 0$. Moreover, due to the behaviour of $\mu_\xi$, and $B_\xi$, easy computations show that in the limit at infinity for $|\xi|$, we have for some $C_1, C_2, C_3 > 0$, 
    \begin{align}
\begin{array}{lcl}
    x_\xi &\sim& C_1|\xi|^{-1/(\gamma + 1)} \\
        A_\xi &\sim& C_2|\xi|^{\frac{1}{2(\gamma + 1)}} \\
        B_\xi x_\xi^{\gamma + 1} &\sim& C_3 \text{, since $B_\xi \sim C_3|\xi|$}.
\end{array}
\end{align}
\end{remark}

\subsection{Dissipation speed for the Fourier components of the generalized operator}\label{section spectral G_V real}

We split this section into two parts. Its first part consists in the real spectral analysis of $G_{V,\xi}$, introduced below, which will be of use when the control zone stays at non-negative distance from the singularity in Theorem \ref{th gamma >= 1 euclidean bounded}. The second part consists in the complex spectral analysis, which will be of use for Theorem \ref{th gamma = 1 euclidean koenig}.\\

We introduce, for a generic complex parameter $\xi \in \mathbb{C}$, such that $\operatorname{Re}(\xi) > 0$, and for every $\gamma \geq 1$, the operator $G_{V,\xi}$ on $L^2(\Omega_x)$, defined by 
\begin{align}\label{eqn: definition fourier component generalized grushin bis}
\begin{array}{crl}
D(G_{V,\xi}) &:=& H^2 \cap H_0^1(\Omega_x),  \\[6pt]
G_{V,\xi} u &:=& -u'' + \xi^2q(x)^2 u + V(x)u,
\end{array}
\end{align}
with $q$ and $V$ satisfying the appropriate assumptions of Theorem \ref{th gamma = 1 euclidean koenig} and \ref{th gamma >= 1 euclidean bounded}.\\

We stress that $G_{V,\xi}$ has compact resolvent, and admits an increasing sequence of eigenvalues $\lambda_{k,\xi} \underset{k \rightarrow + \infty}{\longrightarrow} + \infty$ (see \cite{Berezin1991-nn}).

\subsubsection{Real spectral analysis}

In this section, we assume that $\xi \in \mathbb{R}$. Thus, $G_{V,\xi}$ is self-adjoint and its eigenvalues are real.\\

Let us recall the following well-known Lemma for self-adjoint operators.

\begin{lemma}\label{lemma spectral distance}
    Let $A$ be a self-adjoint operator with domain $D(A)$, and $\lambda \in \mathbb{R}$. Then 
    \begin{align}
        \operatorname{dist}(\lambda, \sigma(A)) \leq \frac{\|(A-\lambda)u\|}{\|u\|},
    \end{align}
for every $u \in D(A)$.
\end{lemma}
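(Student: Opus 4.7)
The statement is a classical consequence of the spectral theorem for self-adjoint operators, so the plan is to reduce everything to the well-known norm estimate on the resolvent. I would first dispose of the trivial case: if $\lambda \in \sigma(G)$, then $\operatorname{dist}(\lambda, \sigma(G)) = 0$ and there is nothing to prove, and if $u = 0$ the inequality is vacuous (with the convention that the right-hand side is interpreted as $+\infty$).

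Assume therefore that $\lambda \in \rho(G)$, the resolvent set, and $u \neq 0$. Since $G$ is self-adjoint, the spectral theorem provides a projection-valued measure $E$ supported on $\sigma(G) \subset \mathbb{R}$ such that $G = \int \mu \, dE(\mu)$, and in particular the resolvent $(G - \lambda)^{-1}$ is a bounded operator on $L^2$ with operator norm
\begin{align*}
    \bigl\|(G-\lambda)^{-1}\bigr\| \;=\; \sup_{\mu \in \sigma(G)} \frac{1}{|\mu - \lambda|} \;=\; \frac{1}{\operatorname{dist}(\lambda, \sigma(G))}.
\end{align*}
This is the standard functional-calculus identity for normal operators, applied to the bounded Borel function $\mu \mapsto (\mu - \lambda)^{-1}$ on $\sigma(G)$.

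Setting $v := (G-\lambda)u \in L^2$ and using that $u \in D(G)$ lies in the range of $(G-\lambda)^{-1}$ (since $(G-\lambda)^{-1}v = u$), I would then write
\begin{align*}
    \|u\| \;=\; \bigl\|(G-\lambda)^{-1}(G-\lambda)u\bigr\| \;\leq\; \bigl\|(G-\lambda)^{-1}\bigr\| \, \bigl\|(G-\lambda)u\bigr\| \;=\; \frac{\|(G-\lambda)u\|}{\operatorname{dist}(\lambda, \sigma(G))}.
\end{align*}
Rearranging yields precisely the claimed inequality. Alternatively, one can argue directly without invoking the resolvent norm: for the spectral measure $\mu_u := \langle E(\cdot)u,u\rangle$, which is supported on $\sigma(G)$,
\begin{align*}
    \|(G-\lambda)u\|^2 \;=\; \int_{\sigma(G)} |\mu - \lambda|^2 \, d\mu_u(\mu) \;\geq\; \operatorname{dist}(\lambda, \sigma(G))^2 \int d\mu_u(\mu) \;=\; \operatorname{dist}(\lambda, \sigma(G))^2 \, \|u\|^2,
\end{align*}
and the inequality follows on taking square roots.

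There is no real obstacle here: the only point to watch is that the resolvent norm identity crucially uses self-adjointness (or at least normality) of $G$; for a general closed operator one only has $\|(G-\lambda)^{-1}\| \geq 1/\operatorname{dist}(\lambda,\sigma(G))$, and the inequality in the lemma would fail. Since the lemma is stated for a self-adjoint $G$, both variants of the argument above go through without issue.
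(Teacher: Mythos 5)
Your proof is correct, and both variants you give (via the resolvent norm identity, and directly via the spectral measure $d\mu_u = \langle dE(\cdot)u,u\rangle$) are standard. The paper does not actually prove this lemma — it labels it ``well-known'' and moves on — so there is no argument in the source to compare against, but your spectral-theorem approach is exactly the canonical one a reader would supply. Your caveat that the inequality relies on self-adjointness (or normality), because for a general closed operator one only has the opposite inequality for the resolvent norm, is also correct and worth noting.
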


We can now investigate the behavior of some eigenvalues $\lambda_\xi$ of the operator of interest, that will be shown to behave asymptotically like the $\mu_\xi$'s introduced in Section \ref{section reminders G_0}. 

\begin{proposition}\label{proposition dissipation general grushin bounded}
For every $|\xi| > 0$ large enough, there exists a constant $C >0$ and an eigenvalue $\lambda_\xi$ of $G_{V,\xi}$ that satisfies 
    \begin{align}\label{estimate distance perturbed eigenvalues}
        |\lambda_\xi - \mu_\xi | \leq C|\xi|^{\frac{1}{\gamma+1}}.
    \end{align}
Namely, for $|\xi|$ large enough, there exists $C_2>C_1>0$ such that
\begin{align}\label{estimate eigenvalue perturbed}
    C_1|\xi|^{2/(\gamma + 1)} \leq \lambda_\xi \leq C_2|\xi|^{2/(\gamma + 1)}.
\end{align}
\end{proposition}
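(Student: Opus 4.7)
The plan is to apply Lemma \ref{lemma spectral distance} with the quasi-mode $u = v_\xi$, the $L^2$-normalized first eigenfunction of the classical Grushin operator $G_{0,\xi}$ studied in Section \ref{section reminders G_0}, and then estimate $\|(G_{V,\xi} - \mu_\xi) v_\xi\|_{L^2}$. Since $v_\xi$ satisfies $-v_\xi'' + c|\xi|^2 |x|^{2\gamma} v_\xi = \mu_\xi v_\xi$ on $\Omega_x$ with $c = q^{(\gamma)}(0)^2/(\gamma!)^2$, a direct computation gives
\[
(G_{V,\xi} - \mu_\xi) v_\xi \;=\; |\xi|^2 \bigl( q(x)^2 - c|x|^{2\gamma} \bigr) v_\xi + V(x)\, v_\xi .
\]
By \ref{H2}, $q(x) = \tfrac{q^{(\gamma)}(0)}{\gamma!}x^\gamma + O(x^{\gamma+1})$ near $0$, so $q(x)^2 - c|x|^{2\gamma}$ is smooth on $\Omega_x$ and vanishes to order at least $2\gamma+1$ at the origin. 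Hence there exists $K > 0$ such that $|q(x)^2 - c|x|^{2\gamma}| \leq K|x|^{2\gamma+1}$ on $\Omega_x$. Since $\|v_\xi\|_{L^2}=1$ and $V \in L^\infty$, the proof reduces to establishing the weighted moment bound
\[
\bigl\| |x|^{2\gamma+1} v_\xi \bigr\|_{L^2}^2 \;=\; \int_{\Omega_x} |x|^{2(2\gamma+1)} v_\xi(x)^2 \, dx \;\leq\; C\, |\xi|^{-2(2\gamma+1)/(\gamma+1)} .
\]

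To prove this bound, I split the integral at $|x| = x_\xi$ defined in \eqref{defxxi}. On the inner region $|x| \leq x_\xi$ one has $|x|^{2(2\gamma+1)} \leq x_\xi^{2(2\gamma+1)}$ and $\|v_\xi\|_{L^2}=1$, yielding a contribution $\lesssim x_\xi^{2(2\gamma+1)} \sim |\xi|^{-2(2\gamma+1)/(\gamma+1)}$ by Remark \ref{remark estimates supersolutions model}. On the outer region $|x| > x_\xi$, the pointwise bound $v_\xi \leq W_\xi$ from Proposition \ref{proposition supersolution grushin model} and Remark \ref{remark proposition supersolution grushin model} controls the tail by
\[
A_\xi^2 \int_{|x| > x_\xi} |x|^{2(2\gamma+1)} e^{-2 B_\xi |x|^{\gamma+1}} \, dx .
\]
The change of variable $u = 2 B_\xi |x|^{\gamma+1}$ converts this into an incomplete Gamma function whose lower endpoint $2 B_\xi x_\xi^{\gamma+1}$ converges to a positive constant (again by Remark \ref{remark estimates supersolutions model}), giving a uniformly bounded factor times $(2B_\xi)^{-(4\gamma+3)/(\gamma+1)}$. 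Multiplying by $A_\xi^2 \sim C|\xi|^{1/(\gamma+1)}$ and using $B_\xi \sim C|\xi|$, the outer contribution is also of order $|\xi|^{-2(2\gamma+1)/(\gamma+1)}$, as desired.

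Plugging the moment bound back in yields
\[
\|(G_{V,\xi} - \mu_\xi) v_\xi \|_{L^2} \;\leq\; K |\xi|^{2} \cdot C\, |\xi|^{-(2\gamma+1)/(\gamma+1)} + \|V\|_\infty \;\leq\; C'\, |\xi|^{1/(\gamma+1)}
\]
for $|\xi|$ large enough. Lemma \ref{lemma spectral distance} then gives $\mathrm{dist}(\mu_\xi, \sigma(G_{V,\xi})) \leq C' |\xi|^{1/(\gamma+1)}$; since $G_{V,\xi}$ has compact resolvent, the spectrum is discrete, so there is indeed an eigenvalue $\lambda_\xi$ of $G_{V,\xi}$ satisfying \eqref{estimate distance perturbed eigenvalues}. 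Inequality \eqref{estimate eigenvalue perturbed} then follows at once by combining \eqref{estimate distance perturbed eigenvalues} with the two-sided asymptotics $C_1 |\xi|^{2/(\gamma+1)} \leq \mu_\xi \leq C_2 |\xi|^{2/(\gamma+1)}$ of Proposition \ref{proposition dissipation classical Omega bounded}, since the perturbation $|\xi|^{1/(\gamma+1)}$ is of strictly lower order.

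The main obstacle is the tail estimate for the weighted moment: the prefactor $A_\xi^2$ grows in $|\xi|$, the Gaussian-like weight $e^{-2B_\xi |x|^{\gamma+1}}$ at the transition scale $x_\xi$ is only of order one (because $B_\xi x_\xi^{\gamma+1}$ stays bounded), and only the additional polynomial factor $|x|^{2(2\gamma+1)}$ restores the correct scaling after the change of variable. The three asymptotics $A_\xi \sim |\xi|^{1/(2(\gamma+1))}$, $B_\xi \sim C|\xi|$, $x_\xi \sim C|\xi|^{-1/(\gamma+1)}$ must be tracked precisely to recover the exponent $1/(\gamma+1)$ in \eqref{estimate distance perturbed eigenvalues} with no logarithmic loss, and this careful bookkeeping is the technical heart of the argument.
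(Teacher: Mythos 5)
Your proposal is correct and follows essentially the same route as the paper: apply Lemma \ref{lemma spectral distance} to the quasi-mode $v_\xi$, write $(G_{V,\xi}-\mu_\xi)v_\xi = |\xi|^2(q^2-c|x|^{2\gamma})v_\xi + Vv_\xi$, control the first term by splitting the weighted-moment integral at the scale $x_\xi$ and invoking the supersolution $W_\xi$ of Proposition \ref{proposition supersolution grushin model} together with the asymptotics of Remark \ref{remark estimates supersolutions model}, and conclude via Proposition \ref{proposition dissipation classical Omega bounded}. The only (cosmetic) difference is that the paper first isolates a neighborhood $(-\epsilon,\epsilon)$ of the origin and handles $\Omega_x\setminus(-\epsilon,\epsilon)$ separately via exponential decay, whereas you observe that the bound $|q(x)^2-c|x|^{2\gamma}|\le K|x|^{2\gamma+1}$ holds globally on the bounded interval, so a single split at $x_\xi$ suffices.
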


\begin{proof}
In the present proof, we shall denote by $C$ any non-negative constant that does not depend on $\xi$, and by $\|\cdot \|$ the $L^2(\Omega_x)$-norm. We denote by $v_\xi$ the first eigenfunction, normalized in $L^2$-norm, of $\mathsf{G}_{\xi}$. We have, applying $G_{V,\xi}$ to $v_\xi$, 
\begin{align*}
    G_{V,\xi} v_\xi = \mu_\xi v_\xi + Q v_\xi + Vv_\xi,
\end{align*}
with 
\begin{align*}
\begin{array}{lcl}
Q v_\xi &=& \left(q(x)^2\xi^2 - \frac{q^{(\gamma)}(0)^2}{(\gamma!)^2}\xi^2 x^{2\gamma}\right) v_\xi.
\end{array} 
\end{align*}
We want to estimate $(G_{V,\xi} - \mu_\xi)v_\xi$ in $L^2$-norm, and the Proposition will follow from Lemma \ref{lemma spectral distance}. Let $\epsilon > 0$ sufficiently small. First, we split the integral as
\begin{align*}
    \|Q v_\xi\|^2 &= \int_{\Omega_x} |Q v_\xi|^2 \ dx \\
    &= \int_{\Omega_x \setminus (-\epsilon,\epsilon)} |Q v_\xi|^2 \ dx + \int_{(-\epsilon,\epsilon)} |Q v_\xi|^2 \ dx .
\end{align*}
We remark that by assumption \ref{H2} on $q$ near the singularity, all the derivatives of $q^2$ up to $2\gamma - 1$ are zero at $x = 0$. Hence, near zero, 
\begin{align*}
    q(x)^2 = \frac{q^{(\gamma)}(0)^2}{(\gamma!)^2}x^{2\gamma} + O\left(|x|^{2\gamma + 1}\right).
\end{align*}

Therefore, by normalization and parity of the $v_\xi$'s, using the supersolutions of Proposition \ref{proposition supersolution grushin model}, and since $x_\xi \rightarrow 0$, for $|\xi|$ large enough we have
\begin{align*}
   \int_{(-\epsilon,\epsilon)} |Q v_\xi|^2 \ dx &\leq  C|\xi|^4 \int_{(-\epsilon,\epsilon)} |x|^{4\gamma + 2}|v_\xi|^2 \ dx \\
   &= C|\xi|^4 \left( \int_0^{x_\xi} |x|^{4\gamma + 2}|v_\xi|^2 \ dx + \int_{x_\xi}^\epsilon |x|^{4\gamma + 2}|v_\xi|^2 \ dx \right)\\
   &\leq C|\xi|^4 \left( x_\xi^{4\gamma + 2} + \int_{x_\xi}^\epsilon |x|^{4\gamma + 2}W_\xi(x)^2 \ dx \right).
\end{align*}

Using the definition of $W_\xi$, and the change of variable $z = x/x_\xi$, we get
\begin{align*}
    \int_{x_\xi}^\epsilon |x|^{4\gamma + 2}W_\xi(x)^2 \ dx &= \int_{x_\xi}^{\epsilon} |x|^{4\gamma + 2} A_\xi^2e^{-2B_\xi x^{\gamma+1}} \ dx \\
    &= \int_{1}^{\epsilon/x_\xi} x_\xi^{4\gamma + 2} |z|^{4\gamma + 2} x_\xi A_\xi^2e^{-2B_\xi x_\xi^{\gamma+1}z^{\gamma+1}} \ dz \\
    &\leq x_\xi^{4\gamma + 2} \int_{1}^{+\infty}  |z|^{4\gamma + 2} x_\xi A_\xi^2e^{-2B_\xi x_\xi^{\gamma+1}z^{\gamma+1}} \ dz.
\end{align*}
Now, thanks to Remark \ref{remark estimates supersolutions model}, for $|\xi|$ large enough, 
\begin{align*}
    C \leq B_\xi x_\xi^{\gamma + 1} \text{ and } \ x_\xi A_\xi^2 \leq C.
\end{align*}
Hence, 
\begin{align*}
    \int_{x_\xi}^{\epsilon} |x|^{4\gamma + 2} A_\xi^2 e^{-2B_\xi x^{\gamma+1}} \ dx \leq C x_\xi^{4\gamma + 2} \int_{1}^{+\infty}  |z|^{4\gamma + 2} e^{-Cz^{\gamma+1}} \ dz.
\end{align*}
with the integral on the right-hand side being finite. We therefore have 
\begin{align}
     \int_{(-\epsilon,\epsilon)} |Q v_\xi|^2 \ dx \leq C|\xi|^4 x_\xi^{4\gamma + 2}.
\end{align}

On the other hand, 
\begin{align*}
\int_{\Omega_x \setminus (-\epsilon,\epsilon)} |Q v_\xi|^2 \ dx &\leq C|\xi|^4\sup_{\Omega_x}\left|q(x)^2- \frac{q^{(\gamma)}(0)^2}{(\gamma!)^2}x^{2\gamma} \right|^2 \int_{\Omega_x \setminus (-\epsilon,\epsilon)} W_\xi(x)^2 \ dx \\
&= C|\xi|^4 \int_{\epsilon}^1 A_\xi^2e^{-2B_\xi x^{\gamma+1}} \ dx \\
&\leq C|\xi|^4 A_\xi^2e^{-2B_\xi\epsilon^{\gamma+1}}.
\end{align*}
Obviously, by Remark \ref{remark estimates supersolutions model}, the right-hand term decays exponentially fast as $|\xi| \rightarrow +\infty$. Therefore, there exists a constant $C>0$ such that 
\begin{align}
    \|Q v_\xi\| \leq C\xi^2 x_\xi^{2\gamma + 1}.
\end{align}
Using once again Remark \ref{remark estimates supersolutions model}, we have that 
\begin{align*}
    \xi^2 x_\xi^{2\gamma + 1} \leq C \xi^{2 - \frac{2\gamma +1}{\gamma + 1}} = C \xi^{\frac{1}{\gamma + 1}}.
\end{align*}
Finally, we obviously have
\begin{align*}
    \|Vv_\xi\| \leq \sup_{\Omega_x} |V(x)| < \infty,
\end{align*}
and \eqref{estimate distance perturbed eigenvalues} follows. Coupling \eqref{estimate distance perturbed eigenvalues} with Proposition \ref{proposition dissipation classical Omega bounded}, we directly obtain \eqref{estimate eigenvalue perturbed}.  
\end{proof}

\subsubsection{Complex spectral analysis}\label{section spectral G_V complex}

In this subsection, we show that in the case $\gamma = 1$, we can obtain more precise estimates on $\lambda_\xi$. The results here directly follow from the spectral analysis provided in \cite{darde2023null}, in the case $V = 0$. As a matter of fact, we show that the spectral analysis from \cite{darde2023null} still holds for our operator.\\

We first need to reintroduce some objects from \cite{darde2023null}, for which we keep as much as possible the same notations. Set $I := (-L,L)$. Denote by $\mathbbm{1}_I$ the operator that maps $v \in L^2(I)$ to its extension by zero on $\mathbb{R}$, and by $\mathbbm{1}^*_I$ its adjoint, which maps $v \in L^2(\mathbb{R})$ to its restriction on $I$. Set
\begin{align}
    \Sigma_{\theta_0} := \{ \nu \in \mathbb{C}, |\nu| \geq 1, \arg(\nu) \leq \theta_0 \} \text{, for some $\theta_0 \in [0, \pi/2)$. }
\end{align}

Let $R > 0$ and $\epsilon > 0$. For $\nu \in \Sigma_{\theta_0}$, set
\begin{align}
    \Tilde{\mathcal{Z}}_\nu = \mathcal{Z}_{q'(0)\nu,R,\epsilon} := \left\{ z \in \mathbb{C}, \ |z| \leq R q'(0)|\nu|, \ \text{dist}(z,\sigma(H_{q'(0)\nu})) \geq \epsilon q'(0)|\nu| \right\} .
\end{align}
We shall write $G_{V,\nu}$, $\nu \in \mathbb{C}$, to be $G_{V,\xi}$ with $\xi$ complex, to keep up with the notations of \cite{darde2023null}, and $G_\nu$ to be $G_{V,\nu}$ with $V = 0$, not to be mistaken with the classical Grushin operator. Also, we denote, for $\beta \in \mathbb{C}$ with $Re(\beta) > 0$, the non-self-adjoint harmonic oscillator $H_\beta = - \partial_x^2 + \beta^2 x^2 $ on $\mathbb{R}$. 

\begin{proposition}\label{proposition estimates resolvent}
There exists $\nu_0 \geq 1$ such that for every $\nu \in \Sigma_{\theta_0}$, $|\nu| \geq \nu_0$ and $z \in \Tilde{\mathcal{Z}}_\nu$, we have $z \in \rho(G_{V,\nu})$, and
\begin{align}
    \sup_{z \in  \Tilde{\mathcal{Z}}_\nu } \left|\left| (G_{V,\nu} - z)^{-1} - \mathbbm{1}^*_I(H_{q'(0)\nu}-z)^{-1} \mathbbm{1}_I  \right|\right| = \underset{\underset{\nu \in \Sigma_{\theta_0}}{|\nu| \rightarrow + \infty}}{O}\left(\frac{1}{|\nu|}\right).
\end{align}
\end{proposition}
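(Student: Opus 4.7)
The plan is to adapt the resolvent comparison of \cite{darde2023null} to the perturbed operator $G_{V,\nu}$. The guiding principle is that the bounded potential $V$ is of order $O(1)$ while the spectrum of $H_{q'(0)\nu}$ scales like $|\nu|$, so $V$ is a subleading perturbation and essentially the same parametrix construction carries over.

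First I would build an approximate right inverse. Choose a cutoff $\chi \in C_c^\infty(I)$ with $\chi \equiv 1$ on a neighborhood of $0$, and for $f \in L^2(I)$ set $\tilde u := (H_{q'(0)\nu}-z)^{-1}\mathbbm{1}_I f$ on $\mathbb{R}$ and $u := \mathbbm{1}_I^*(\chi \tilde u) \in D(G_{V,\nu})$. A direct computation using $-\tilde u'' + q'(0)^2\nu^2 x^2 \tilde u - z \tilde u = \mathbbm{1}_I f$ yields
$$(G_{V,\nu}-z)\, u = f + R_\nu(z) f,$$
where $R_\nu(z) f := \bigl[W \chi \tilde u - 2\chi' \tilde u' - \chi'' \tilde u\bigr]\big|_I$ and $W(x) := \nu^2(q(x)^2 - q'(0)^2 x^2) + V(x)$. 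This plays the role of the usual second resolvent identity in the present non-matching-domain setting.

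Next, one has to control $\|R_\nu(z)\|$ as an operator on $L^2(I)$, uniformly in $z \in \Tilde{\mathcal{Z}}_\nu$. By \ref{H2}, $q(x)^2 - q'(0)^2 x^2 = O(|x|^3)$ near $0$; combined with weighted estimates of the form $\|x^k (H_{q'(0)\nu}-z)^{-1}\| \lesssim |\nu|^{-1-k/2}$ on $\Tilde{\mathcal{Z}}_\nu$ (which are proved in \cite{darde2023null}), the cubic correction contributes $o(1)$. The potential term $V\chi\tilde u$ is controlled by $\|V\|_\infty \cdot \|(H_{q'(0)\nu}-z)^{-1}\| = O(|\nu|^{-1})$. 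The commutator terms $\chi'\tilde u'$ and $\chi''\tilde u$ are supported where $|x|$ is bounded away from $0$, and there the resolvent kernel of the non-self-adjoint harmonic oscillator decays like $e^{-c|\nu|^{1/2}}$ by Agmon/WKB off-diagonal estimates, again from \cite{darde2023null}. Summing, $\|R_\nu(z)\| = o(1)$ uniformly on $\Tilde{\mathcal{Z}}_\nu$.

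To conclude, a Neumann series argument gives the invertibility of $I + R_\nu(z)$ for $|\nu|$ large, hence $z \in \rho(G_{V,\nu})$ and
$$(G_{V,\nu}-z)^{-1} = \mathbbm{1}_I^*\chi (H_{q'(0)\nu}-z)^{-1}\mathbbm{1}_I \,\bigl(I + R_\nu(z)\bigr)^{-1}.$$
Subtracting $\mathbbm{1}_I^*(H_{q'(0)\nu}-z)^{-1}\mathbbm{1}_I$, expanding $(I+R_\nu)^{-1} = I - R_\nu(I+R_\nu)^{-1}$, and using $\|(H_{q'(0)\nu}-z)^{-1}\| = O(|\nu|^{-1})$ delivers the $o(|\nu|^{-1})$ bound, up to the cutoff mismatch $\mathbbm{1}_I^*(1-\chi)(H_{q'(0)\nu}-z)^{-1}\mathbbm{1}_I$, which is exponentially small by the same Agmon argument. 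The main obstacle will be the uniform weighted and off-diagonal resolvent estimates for the non-self-adjoint harmonic oscillator on the pseudo-spectrally non-trivial region $\Tilde{\mathcal{Z}}_\nu$; these form the technical heart of the argument and are imported from \cite{darde2023null}. The only genuinely new verification is that the $V$-contribution, of combined size $O(|\nu|^{-2})$ after composition with the outer resolvent, is indeed strictly $o(|\nu|^{-1})$.
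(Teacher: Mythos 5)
Your proposal takes a genuinely different, and considerably more laborious, route than the paper. The paper's proof never goes back to the harmonic oscillator at all: since \cite[Proposition 4.1]{darde2023null} already provides the estimate $\|(G_\nu-z)^{-1}\|=o(1/|\nu|)$ on $\tilde{\mathcal Z}_\nu$ and the comparison of $(G_\nu-z)^{-1}$ with $\mathbbm{1}^*_I(H_{q'(0)\nu}-z)^{-1}\mathbbm{1}_I$, one simply factors
$G_{V,\nu}-z=\bigl[1+V(G_\nu-z)^{-1}\bigr](G_\nu-z)$,
runs a Neumann series in the scalar perturbation $V(G_\nu-z)^{-1}$ (whose norm is $O(\|V\|_\infty/|\nu|)\to 0$), and then the second resolvent identity yields
$\|(G_{V,\nu}-z)^{-1}-(G_\nu-z)^{-1}\|=o(1/|\nu|)$; the triangle inequality with the $V=0$ estimate finishes. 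Everything hard is black-boxed from \cite{darde2023null}. Your approach reopens that black box: you rebuild the parametrix against $H_{q'(0)\nu}$ with $V$ folded into $W=\nu^2(q^2-q'(0)^2x^2)+V$, and thus have to re-import the weighted and off-diagonal resolvent estimates for the non-self-adjoint harmonic oscillator that constitute the core technical work of \cite{darde2023null}. This is valid in principle, but you prove far more than is needed.

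There is also a concrete gap in your parametrix identity. With $u=\mathbbm{1}_I^*(\chi\tilde u)$ and $\tilde u=(H_{q'(0)\nu}-z)^{-1}\mathbbm{1}_I f$, the Leibniz computation actually gives
$(G_{V,\nu}-z)u=\chi f+R_\nu(z)f$, not $f+R_\nu(z)f$: since $\chi\in C_c^\infty(I)$ vanishes near $\pm L$, the missing piece $(1-\chi)f$ has operator norm $1$, so $\chi+R_\nu$ is not a small perturbation of the identity and the Neumann series as written does not close. The standard fix is a second parametrix for data supported on $\operatorname{supp}(1-\chi)$, where $\nu^2 q(x)^2\gtrsim\nu^2$ dominates $|z|\lesssim|\nu|$ and the Dirichlet resolvent of $G_{V,\nu}$ is $O(\nu^{-2})$; one then glues the two. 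This is exactly the kind of boundary-layer bookkeeping you avoid by taking the paper's shortcut. I'd recommend replacing your parametrix reconstruction with the paper's one-line factorization of $G_{V,\nu}-z$ through $G_\nu-z$.
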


\begin{proof}
In the present proof, we shall denote by $\|\cdot \|$ the operator norm. From \cite[Proposition 4.1]{darde2023null}, we know that such a $z$ introduced in the proposition is in fact in $\rho(G_\nu)$. For such a $z$, we write
\begin{align*}
   G_{V,\nu} - z = G_\nu + V - z = [1 + V(G_\nu - z)^{-1}](G_\nu - z),
\end{align*}
where the potential $V$ can be understood as a bounded linear operator on $L^2(I)$. Since we know from \cite[Proposition 4.1]{darde2023null} combined with \cite[Proposition C.1 (iv)]{darde2023null}, that 
\begin{align}\label{estimate resolvent koenig}
    \sup_{z \in  \Tilde{\mathcal{Z}}_\nu } \|(G_\nu - z)^{-1}\| = \underset{\underset{\nu \in \Sigma_{\theta_0}}{|\nu| \rightarrow + \infty}}{O}\left(\frac{1}{|\nu|}\right),
\end{align}
it follows that $1 + V(G_\nu - z)^{-1}$ is an invertible linear operator on $L^2(I)$ for $|\nu|$ large enough, and so is $G_{V,\nu} - z$. Moreover,
\begin{align*}
    \|[1 + V(G_\nu - z)^{-1}]^{-1}\| = \frac{1}{1 - \|V(G_\nu - z)^{-1}\|} \rightarrow 1,
\end{align*}
as $|\nu| \rightarrow + \infty$, $\nu \in \Sigma_{\theta_0}$, by \eqref{estimate resolvent koenig}. Hence, since
\begin{align*}
   (G_{V,\nu} - z )^{-1} = (G_\nu - z)^{-1}[1 + V(G_\nu - z)^{-1}]^{-1}, 
\end{align*}
it follows that we also have  
\begin{align}\label{estimate resolvent roman}
    \sup_{z \in  \Tilde{\mathcal{Z}}_\nu }\|(G_{V,\nu} - z )^{-1} \| = \underset{\underset{\nu \in \Sigma_{\theta_0}}{|\nu| \rightarrow + \infty}}{O}\left(\frac{1}{|\nu|}\right).
\end{align}
By triangular inequality, since we already know from \cite[Proposition 4.1]{darde2023null} that
\begin{align*}
 \sup_{z \in  \Tilde{\mathcal{Z}} } \left|\left| (G_\nu - z)^{-1} - \mathbbm{1}^*_I(H_{q'(0)\nu}-z)^{-1} \mathbbm{1}_I  \right|\right| = \underset{\underset{\nu \in \Sigma_{\theta_0}}{|\nu| \rightarrow + \infty}}{o}\left(\frac{1}{|\nu|}\right),
\end{align*}
it is sufficient to check that 
\begin{align*}
    \sup_{z \in  \Tilde{\mathcal{Z}}_\nu } \left|\left| (G_{V,\nu} - z)^{-1} - (G_\nu - z)^{-1} \mathbbm{1}_I  \right|\right| = \underset{\underset{\nu \in \Sigma_{\theta_0}}{|\nu| \rightarrow + \infty}}{O}\left(\frac{1}{|\nu|}\right),
\end{align*}
but this is a direct consequence of \eqref{estimate resolvent koenig} and \eqref{estimate resolvent roman}, and the proposition follows.
\end{proof}

Now, using basic tools from perturbation theory (see for instance \cite{Kato1995-xh}), or following the exact same proofs of \cite[Proposition 3.6, Proposition 4.2]{darde2023null}, we get the following corollary.

\begin{corollary}\label{corollary dissipation general grushin}
In the limit $|\xi| \rightarrow +\infty$, $\xi \in \Sigma_{\theta_0}$, we have that $\lambda_\xi$ is algebraically simple, and 
\begin{align}
    \lambda_\xi = q'(0) \xi + o(|\xi|).
\end{align}
\end{corollary}

\section{The Grushin equation on tensorized domains}\label{section grushin euclidiens}

In this section, we prove the non null-controllability of our generalized Grushin operators for various considerations of the control zone in some Euclidean domains. 

\subsection{Proof of Theorem \ref{th gamma >= 1 euclidean bounded}: control on vertical strips}\label{section proof th euclid bounded}

\subsubsection{Agmon estimates}

Assume that $\Omega_y = \mathbb{S}^1 \text{ or } (0,\pi)$. In this case, we consider a sequence of solutions of the form
\begin{align*}
    g_n(t,x,y) = e^{-\lambda_n t}v_n(x)\phi_n(y),
\end{align*}
for $n$ large, where $\phi_n$ is a normalized in norm eigenfunction of $-\partial_y(r(y)^2\partial_y)$ on $\Omega_y$, associated to $\xi_n^2$, and $v_n$ a normalized in norm eigenfunction of $G_n := G_{V,\xi_n}$ defined in \eqref{eqn: definition fourier component generalized grushin bis}, associated to $\lambda_n$ that satisfies the estimate of Proposition \ref{proposition dissipation general grushin bounded} when $\gamma \geq 1$, and Corollary \ref{corollary dissipation general grushin} when $\gamma = 1$. We abuse the notation here by denoting $\lambda_n$ and $v_n$ instead of $\lambda_{\xi_n}$ and $v_{\xi_n}$. We write the control zone as $\omega := \omega_x \times \Omega_y$, with $\overline{\omega_x} \cap \{x=0\} = \emptyset$.\\

Introduce the set 
\begin{align}\label{eqn: definition F delta}
    F_\delta := \left\{ x \in \Omega_x, \quad q(x)^2 \leq \delta \right\}.
\end{align}
By assumption \ref{H2}, there exists $\delta_0$, such that for every $0 \leq \delta < \delta_0$, the set $F_\delta$ is a connected neighborhood of $0$.\\

Introduce the Agmon distance between $\omega_x$ and $F_\delta$, as 
\begin{align}\label{eqn: agmon distance between sets}
    d_{\operatorname{agm},\delta}(\omega_x,F_\delta) = \inf_{x \in \omega_x, y \in F_\delta} \int_{\min(x,y)}^{\max(x,y)} (q(s)^2 - \delta)_+^{1/2} \ ds. 
\end{align}
Observe that if $q$ satisfies \ref{H2}, then as $\delta \rightarrow 0$ we retrieve the aforementioned Agmon distance \eqref{agmon distance gamma = 1}.\\

The proof of Theorem \ref{th gamma >= 1 euclidean bounded} follows directly from the following proposition for which we follow the ideas of Agmon estimates (see e.g. \cite{Helffer1988-yw, Agmon}).

\begin{proposition}\label{proposition agmon decay grushin general}
There exists $\delta_0 > 0$, such that for every $\delta \in (0,\delta_0)$, there exist $n_0 \in \mathbb{N}$ and $C > 0$, such that for every $n \geq n_0$,
\begin{align}
    \int_{\omega_x}  v_n(x)^2 \ dx \leq C e^{-2\xi_n(1-\delta)d_{\operatorname{agm},\delta}(\omega_x,F_\delta)},
\end{align}
where $F_\delta$ is defined by \eqref{eqn: definition F delta} and satisfies $F_\delta \cap \overline{{\omega}_x} = \emptyset$, and $d_{\operatorname{agm},\delta}(\omega_x,F_\delta)$ is defined by \eqref{eqn: agmon distance between sets}.

\end{proposition}

\begin{proof} We drop every notation concerning the variable $x$ to simplify the reading. Let $v_n = \theta_ne^{-\xi_n\psi}$ for some sufficiently regular function $\psi$. Then, we have
\begin{align*}
    v_n' &= \theta_n'e^{-\xi_n\psi} - \xi_n \psi'\theta_ne^{-\xi_n\psi}, \\
    v_n'' &= \theta_n''e^{-\xi_n\psi} - 2\xi_n\theta_n'\psi'e^{-\xi_n\psi} - \xi_n\psi''\theta_ne^{-\xi_n\psi} + \xi_n^2\psi'^2\theta_ne^{-\xi_n\psi}.
\end{align*}

Since $(G_n - \lambda_n)v_n = 0$, we get that $\theta_n$ must satisfy 
\begin{align}\label{eqn: agmon equality}
    -\theta_n''+ 2\xi_n\theta_n'\psi' + (\xi_n\psi'' - \xi_n^2\psi'^2 + \xi_n^2q^2 + V - \lambda_n) \theta_n = 0.
\end{align}
Integrating by parts, we first remark that 
\begin{align*}
   \int_{\Omega_x} \theta_n'\psi'\theta_n = \frac{1}{2}\int_{\Omega_x} \frac{d}{dx}(\theta_n^2) \psi',
\end{align*}
which implies that 
\begin{align*}
     \int_{\Omega_x} \theta_n'\psi'\theta_n = -\frac{1}{2} \int_{\Omega_x} \psi''\theta_n^2.
\end{align*}
Therefore, multiplying \eqref{eqn: agmon equality} by $\theta_n$ and integrating by parts, we get
\begin{align*}
   \int_{\Omega_x} \theta_n'^2+ (\xi_n^2 q^2 + V - \xi_n^2\psi'^2 - \lambda_n) \theta_n^2 = 0,
\end{align*}
which implies, dividing by $\xi_n^2$,
\begin{align*}
     \frac{1}{\xi_n^2} \int_{\Omega_x} \theta_n'^2 + \int_{\Omega_x} \left(q^2 - \psi'^2 + \frac{V}{\xi_n^2} - \frac{\lambda_n}{\xi_n^2} \right) \theta_n^2 = 0,
\end{align*}
and
\begin{align*}
    \int_{\Omega_x} \left(q^2 - \psi'^2 + \frac{V}{\xi_n^2} - \frac{\lambda_n}{\xi_n^2} \right) \theta_n^2 \leq 0.
\end{align*}

Since the only zero for the function $q$ is at $x=0$ by assumption \ref{H2}, there exists $\delta_0 > 0$ sufficiently small such that for every $\delta \in (0,\delta_0)$ the set $F_\delta := \{ x \in \Omega_x, q(x)^2 \leq \delta \}$ is a connected neighborhood of zero, which is decreasing for the inclusion as $\delta \rightarrow 0^+$, and moreover satisfies $F_\delta \cap \overline{\omega_x} = \emptyset$ since $\overline{\omega_x} \cap \{0\} = \emptyset$. We choose such a $\delta \in (0,\delta_0)$. \\

As we know that $\lambda_n/ \xi_n^2$ tends to zero as $n$ tends to infinity thanks to Proposition \ref{proposition dissipation general grushin bounded}, and the fact that $V \in L^\infty(\Omega)$, there exists $n_0 \in \mathbb{N}$ such that for every $n \geq n_0$ we have
\begin{align*}
    \left\{ x \in \Omega_x, q(x)^2 \leq \frac{\lambda_n - V}{\xi_n^2} \right\} \subset F_\delta.
\end{align*}

Thus, for every $n \geq n_0$ we get
\begin{align*}
    \int_{\Omega_x} (q^2 - \psi'^2 - \delta) \theta_n^2 \leq 0.
\end{align*}
Choose $\psi(x) := (1-\delta)d_{\operatorname{agm},\delta}(x,F_\delta)$, where $d_{\operatorname{agm}}$ is the degenerated distance induced by the degenerated Agmon metric $(q^2 - \delta)_+ dx^2$, where $dx^2$ is the standard Euclidean metric on $\Omega_x$. We note that we have 
\begin{align}
    |\psi'(x)|^2 \leq (1-\delta)^2(q^2 - \delta)_+.
\end{align}
Therefore, on the set $F_\delta$ we have that $q^2 - \psi'^2 - \delta = q^2 - \delta \leq 0$, and on the complement of $F_\delta$ we have 
\begin{align*}
    q^2 - \psi'^2 - \delta &\geq (1-(1-\delta)^2)(q^2 - \delta)_+ \\
    &= (2\delta - \delta^2)(q^2 - \delta)_+ \\
    &\geq 0.
\end{align*}
It follows that 
\begin{align*}
    (2\delta - \delta^2) \int_{\Omega_x \setminus F_\delta} (q^2 - \delta) \theta_n^2 \leq - \int_{F_\delta} (q^2 - \delta) \theta_n^2.
\end{align*}

Replacing $\theta_n$ by its expression $v_n(x)e^{\xi_n(1-\delta)d_{\operatorname{agm},\delta}(x,F_\delta)}$, using the fact that $\overline{\omega_x} \cap F_\delta = \emptyset$, and that $d_{\operatorname{agm},\delta}(x,F_\delta) = 0$ on $F_\delta$, we get that on one hand
\begin{align*}
    - \int_{F_\delta}  (q^2 - \delta) \theta_n^2 &=  - \int_{F_\delta}  (q^2 - \delta) v_n^2 \\
    &\leq \sup_{F_\delta}|q^2 - \delta|,
\end{align*}
since $v_n$ is normalized in $L^2$-norm. On the other hand,
\begin{align*}
 (2\delta - \delta^2) \int_{\Omega_x \setminus F_\delta} (q^2 - \delta) \theta_n^2  &=  (2\delta - \delta^2) \int_{\Omega_x \setminus F_\delta} (q^2 - \delta) v_n^2  e^{2\xi_n(1-\delta)d_{\operatorname{agm},\delta}(x,F_\delta)} \\
 &\geq (2\delta - \delta^2) \int_{\omega_x} (q^2 - \delta) v_n^2  e^{2\xi_n(1-\delta)d_{\operatorname{agm},\delta}(x,F_\delta)} \\
 &\geq (2\delta - \delta^2) \min_{\omega_x} (q^2 - \delta) e^{2\xi_n(1-\delta)d_{\operatorname{agm},\delta}(\omega_x,F_\delta)} \int_{\omega_x}  v_n^2,
\end{align*}
where we stress that $\min_{\omega_x} (q^2 - \delta) > 0$. Therefore, we finally get
\begin{align*}
   \int_{\omega_x}  v_n^2 \leq \frac{\sup_{F_\delta}|q^2 - \delta|}{(2\delta - \delta^2) \min_{\omega_x} (q^2 - \delta)} e^{-2\xi_n(1-\delta)d_{\operatorname{agm},\delta}(\omega_x,F_\delta)},
\end{align*}
which proves the proposition. 
\end{proof}

\subsubsection{Proof of Theorem \ref{th gamma >= 1 euclidean bounded}}

We can now proceed to the proof of Theorem \ref{th gamma >= 1 euclidean bounded}.

\begin{proof}[Proof of theorem \ref{th gamma >= 1 euclidean bounded} in the case $\Omega_y = \mathbb{S}^1 \text{ or } (0,\pi)$] Let us split the proof in two parts.\\

\underline{Proof in the case $\gamma > 1$}\\

Choose the sequence of solutions $g_n$ introduced at the beginning of the section. Assume that system \eqref{adjoint system change variable} is observable from $\omega$ in time $T > 0$. Then, there exists $C > 0$ such that for any $n \in \mathbb{N}^*$,  
\begin{align*}
     \frac{e^{-2\lambda_n T}}{\lambda_n}\int_{\Omega_x} v_n(x)^2 \ dx &\leq C \int_0^T \int_{\omega_x} e^{-2\lambda_n t} v_n(x)^2 \ dx \\
     &\leq C T  \int_{\omega_x} v_n(x)^2 \ dx.
\end{align*}
Thanks to Proposition \ref{proposition dissipation general grushin bounded}, we can apply Proposition \ref{proposition agmon decay grushin general} that states that there exists two constants $C_1,C_2 > 0$, such that for every $n$ sufficiently large, since the integral in the left-hand side values one, 
\begin{align*}
    1 \leq C_1CT\lambda_ne^{2\lambda_n T}e^{-\xi_nC_2}.
\end{align*}

Using now the fact that $\lambda_n = \mathcal{O}(n^{\frac{2}{\gamma + 1}})$ by Proposition \ref{proposition dissipation general grushin bounded}, we have that for some $C_3 > 0$
\begin{align*}
    1 \leq C_1C_3CTn^{\frac{2}{\gamma + 1}}e^{2C_3\xi_n^{\frac{2}{\gamma + 1}}T}e^{-\xi_n C_2}.
\end{align*}
We therefore simply have to show that for any time $T > 0$, the right hand side tends to zero as $n$ tends to infinity. This is straightforward since 
\begin{align*}
2C_3\xi_n^{\frac{2}{\gamma + 1}}T - \xi_n C_2 = \xi_n(\xi_n^{\frac{1 - \gamma}{ \gamma + 1}}2C_3T - C_2),
\end{align*}
becomes negative for any time $T > 0$ as $n$ tends to infinity since $\gamma > 1$. This disproves the observability inequality for any time $T > 0$. \\

\underline{Proof in the case $\gamma = 1$.}\\

In this case, the above strategy still works to disprove the observability in small time, and give analogous results as the ones known from \cite{beauchard2014null, darde2023null}. Similarly to the previous proof, assuming that system \eqref{adjoint system change variable} is observable in time $T > 0$ from $\omega$ implies, applying Proposition \ref{proposition agmon decay grushin general}, that for $n$ large enough and some constants $C,C_1 > 0$,
\begin{align*}
    1 \leq C_1CT \lambda_n e^{2\lambda_nT}e^{-2\xi_n(1-\delta)d_{\operatorname{agm},\delta}(\omega_x,F_\delta)},
\end{align*}
with $\delta > 0$ an arbitrary small parameter. Thus, for any $T > 0$, observability cannot hold if
\begin{align*}
    e^{2\lambda_n T - 2\xi_n(1-\delta)d_{\operatorname{agm},\delta}(\omega_x,F_\delta)} \rightarrow 0, \text{ as $n$ tends to infinity.}
\end{align*}
We use that $\lambda_n = q'(0)\xi_n + o(\xi_n)$ for large values of $n$ from Corollary \ref{corollary dissipation general grushin}. Letting $\epsilon > 0$, for $n$ large enough we have $\lambda_n \leq (1+\epsilon)\xi_n$, and from the above estimate, the observability inequality cannot hold if, for $n$ large,  
\begin{align*}
    2(1+\epsilon)q'(0)\xi_n T - 2\xi_n(1-\delta)d_{\operatorname{agm},\delta}(\omega_x,F_\delta) < 0.
\end{align*}
That is, we cannot have
\begin{align*}
    T <  \frac{(1-\delta)d_{\operatorname{agm},\delta}(\omega_x,F_\delta)}{(1+\epsilon)q'(0)}.
\end{align*}
Since this is true for any $\delta,\epsilon > 0$ arbitrary small, we cannot have 
\begin{align*}
    T <  \frac{d_{\operatorname{agm}}(\omega_x)}{q'(0)}.
\end{align*}

This concludes the proof in the case $\gamma = 1$.
\end{proof}

Let us now proceed to the case of $\Omega_y = \mathbb{R}$ and show that it is analogous to the previous case. 

\begin{proof}[Proof of theorem \ref{th gamma >= 1 euclidean bounded} in the case $\Omega_y = \mathbb{R}$]
Recall that we set $r(y) = 1$ on $\mathbb{R}$. Due to the absence of restriction in the $y$-direction for some $x$, the (equivalent) problems of null-controllability and observability are equivalent to their formulation in Fourier components. Namely, \eqref{observability inequality Omega} is equivalent to
\begin{align}\label{observability inequality Fourier}
\|\hat{g}(T)\|_{D\left(G_{V,\xi}^{-1/2}\right)}^2  \leq C \int_0^T \int_\mathbb{R} \int_{\Tilde{\omega}_x} |\hat{g}(t,x,\xi)|^2 \ dx \ d\xi \ dt,
\end{align}
with $\hat{g}$ the solution of 
\begin{align}\label{grushin system adjoint fourier}
    \left\{ \begin{array}{lcll}
     \partial_t \hat{g} - \partial_x^2\hat{g} + \xi^2 q(x)^2\hat{g} + V(x)\hat{g} & = & 0, & (t,x,\xi) \in (0,T) \times (-L,L) \times \mathbb{R},\\
     \hat{g}(t,\pm L,\xi) &=& 0, & (t,y) \in (0,T) \times \mathbb{R}, \\
     \hat{g}(0,x,\xi) & = & \hat{g}_0(x,\xi), & (x,\xi) \in (-L,L) \times \mathbb{R},
    \end{array} \right.
\end{align}
where $\hat{g}$ is the partial Fourier transform of $g$ with respect to $y$, and $q$ satisfies assumption \ref{H2}. \\

As we do not have to come back to the functions in the $y$ variable, we can drop the hat notation. We also write the dependence in $\xi$ as a variable and not as a subscript. By Proposition \ref{proposition dissipation general grushin bounded}, we can extract an eigenvalue of 
\begin{align}\label{fourier eigenfunction}
    G_{V,\xi} = - \partial_x^2 + q(x)^2\xi^2 + V(x),
\end{align}
that we denote by $\lambda(\xi)$, and which satisfies for $|\xi| >0 $ sufficiently large
\begin{align}
    \lambda(\xi) \leq C|\xi|^{2/(\gamma+1)}, \quad \text{for some } C >0.
\end{align}
In the case where $q$ satisfies \ref{H2} with $\gamma=1$, we have from Corollary \ref{corollary dissipation general grushin} that $\lambda(\xi)$ can be chosen as the first eigenvalue of $G_{V,\xi}$ \eqref{fourier eigenfunction} which satisfies in the limit $\xi \rightarrow + \infty$
\begin{align}
    \lambda(\xi) = \xi q'(0) + o(|\xi|).
\end{align}
We denote by $g(\cdot,\xi)$ that associated eigenfunction normalized in $L^2(\Omega_x)$-norm.\\

Due to the form of \eqref{grushin system adjoint fourier}, we can localize these eigenfunctions around high frequencies by multiplying them by a smooth compactly supported function. Let $\delta > 0$, and set, for $n>0$, $\psi_n \in C_c^\infty(\mathbb{R})$ verifying
\begin{align}
    \operatorname{supp}(\psi_n) = [n-\delta, n+\delta], \qquad 0 \leq \psi_n \leq 1, \qquad \psi_n(x) = 1, \ \forall x \in [n-\delta/2, n+\delta/2].
\end{align}
Hence, the sequence of functions
\begin{align}
    g_n(t,x,\xi) = e^{-\lambda(\xi)t}g(x,\xi)\psi_n(\xi)
\end{align}
are solutions of \eqref{grushin system adjoint fourier}. Plugging these solutions into the observability inequality \eqref{observability inequality Fourier}, we get 
\begin{align*}
\int_{\mathbb{R}^2} \frac{e^{-2\lambda(\xi)T}}{\lambda(\xi)} g(x,\xi)^2\psi_n(\xi)^2 \ dx \ d\xi \leq C \int_0^T \int_\mathbb{R} \int_{\Tilde{\omega}_x} e^{-2\lambda(\xi)t}g(x,\xi)^2\psi_n(\xi)^2 \ dx \ d\xi \ dt.
\end{align*}
By normalization of $g(\xi, \cdot)$, for $n$ large enough, the left-hand term is bounded below by
\begin{align*}
\int_{\mathbb{R}^2} \frac{e^{-2\lambda(\xi)T}}{\lambda(\xi)} g(x,\xi)^2\psi_n(\xi)^2 \ dx \ d\xi &\geq  \int_{n-\delta/2}^{n+\delta/2} \int_\mathbb{R} \frac{e^{-2\lambda(\xi)T}}{\lambda(\xi)}g(x,\xi)^2 \ dx \ d\xi \\
&= \int_{n-\delta/2}^{n+\delta/2} \frac{e^{-2\lambda(\xi)T}}{\lambda(\xi)} d\xi.
\end{align*}
For the right-hand term, for $n$ large enough, we have
\begin{align*}
\int_0^T \int_\mathbb{R} \int_{\Tilde{\omega}_x} e^{-2\lambda(\xi)t}g(x,\xi)^2\psi_n(\xi)^2 \ dx \ d\xi \ dt &\leq \int_0^T \int_{n-\delta}^{n+\delta} \int_{\Tilde{\omega}_x} e^{-2\lambda(\xi)t} g(x,\xi)^2\ dx \ d\xi \\
&\leq T \int_{n-\delta}^{n+\delta} \int_{\Tilde{\omega}_x} g(x,\xi)^2\ dx \ d\xi. 
\end{align*}

Notice that Proposition \ref{proposition agmon decay grushin general} holds replacing $\xi_n$ with a generic parameter $\xi$, for $|\xi|$ large enough. Thus, we can keep bounding the right-hand side above using Proposition \ref{proposition agmon decay grushin general}. We will use the same parameter $\delta > 0$ arbitrary small in both the definition of $\psi_n$ and when applying Proposition \ref{proposition agmon decay grushin general} and Corollary \ref{corollary dissipation general grushin}.\\

\underline{Proof in the case $\gamma > 1$.}\\

In this case, we obtain for some constants $C_0,C_1,C_2 > 0$ and $n$ sufficiently large, on one hand
\begin{align*}
  \int_{\mathbb{R}^2} \frac{e^{-2\lambda(\xi)T}}{\lambda(\xi)} g(x,\xi)^2\psi_n(\xi)^2 \ dx \ d\xi  &\geq \int_{n-\delta/2}^{n+\delta/2}\frac{e^{-2\lambda(\xi)T}}{\lambda(\xi)}  d\xi \\
  &\geq \frac{e^{-2C_0(n+\delta/2)^{2/(\gamma +1)}T}}{\lambda(n+\delta/2)},
\end{align*}
and on the other hand,
\begin{align*}
    \int_0^T \int_\mathbb{R} \int_{\Tilde{\omega}_x} e^{-2\lambda(\xi)t}g(x,\xi)^2\psi_n(\xi)^2 \ dx \ d\xi \ dt &\leq C_1T \int_{n-\delta}^{n+\delta} e^{-\xi C_2} \ d\xi \\
    &\leq 2\delta C_1 Te^{-(n-\delta)C_2}. 
\end{align*}
Thus, observability cannot hold in any time $T > 0$ such that 
\begin{align}
    2C_0(n+\delta/2)^{2/(\gamma+1)}T-(n-\delta)C_2 < 0, \quad \text{as } n \rightarrow + \infty.
\end{align}
But since $\gamma >1$, we observe that the above becomes negative in the limit $n \rightarrow + \infty$ for any fixed time $T > 0$. Thus, observability never occurs.\\

\underline{Proof in the case $\gamma = 1$.} \\

In this case, we have on one hand, for $n$ sufficiently large, by Corollary \ref{corollary dissipation general grushin},
\begin{align*}
  \int_{\mathbb{R}^2} \frac{e^{-2\lambda(\xi)T}}{\lambda(\xi)} g(x,\xi)^2\psi_n(\xi)^2 \ dx \ d\xi  &\geq \int_{n-\delta/2}^{n+\delta/2}\frac{e^{-2\lambda(\xi)T}}{\lambda(\xi)}  d\xi \\
  &\geq \frac{e^{-2q'(0)(1+\delta)(n+\delta/2)T}}{\lambda(n + \delta/2)}.
\end{align*}
On the other hand, by Proposition \ref{proposition agmon decay grushin general},
\begin{align*}
    \int_0^T \int_\mathbb{R} \int_{\Tilde{\omega}_x} e^{-2\lambda(\xi)t}g(x,\xi)^2\psi_n(\xi)^2 \ dx \ d\xi \ dt &\leq C(\delta)T \int_{n-\delta}^{n+\delta} e^{-2\xi(1-\delta)d_{\operatorname{agm}}(\tilde{\omega}_x,F_\delta)} \ d\xi \\
    &\leq 2\delta C(\delta) Te^{-2(n+\delta)(1-\delta)d_{\operatorname{agm}}(\tilde{\omega}_x,F_\delta)}. 
\end{align*}
Thus, observability cannot hold in any time $T > 0$ such that 
\begin{align}
    \displaystyle 2q'(0)(1+\delta)(n+\delta/2)T - 2(n+\delta)(1-\delta)d_{\operatorname{agm}}(\tilde{\omega}_x,F_\delta) < 0, \quad \text{as } n \rightarrow + \infty.
\end{align}
It is therefore not hard to see that this cannot hold in any time 
\begin{align}
    T < \frac{1-\delta}{q'(0)(1+\delta)}d_{\operatorname{agm}}(\tilde{\omega}_x,F_\delta).
\end{align}

Since $\delta > 0$ can be taken arbitrary small, this yields the sought result and concludes the proof. 
\end{proof}

\subsection{Proof of Theorem \ref{th gamma = 1 euclidean koenig}: control in the complement of a rectangle}\label{section proof th euclid bounded koenig}

The proof of this theorem is the same as \cite[Theorem 1.3]{darde2023null} in the case $V =0$ and for initial states in $L^2(\Omega)$, up to some remarks. We therefore only state the modifications to take into account, and omit some details. \\

We set ourselves on $L^2(\Omega, \mathbb{C})$, for which non null-controllability implies the same on $L^2(\Omega, \mathbb{R})$. Indeed, the Grushin operator $G_V$ having real-valued coefficients, it acts on complex-valued functions as $\Hat{G}_V(f+ig) = G_Vf + iG_Vg$. It inherits all the properties of $G_V$, has domain $D(\Hat{G}_V) = D(G_V) + iD(G_V)$, and generates a continuous semigroup $(T(t))_t$, which satisfies, denoting $(S(t))_t$ the one of $G_V$, $T(t)(f+ig) = S(t)f + iS(t)g$. Hence, it is not hard to see that observability in time $T > 0$ for $G_V$ implies the same for $\Hat{G}_V$. In particular, given a sequence of solutions refuting the observability for $\Hat{G}_V$, the real-valued functions given by either the real part or imaginary part will disprove the observability on $L^2(\Omega, \mathbb{R})$ for $G_V$. We shall drop the hat notation for the complexification of $G_V$ has there shall not be any confusion. 

\begin{proof}[Proof of Theorem \ref{th gamma = 1 euclidean koenig}]
We prove the theorem in the case $\Omega_y = \mathbb{S}^1$. The case $(0,\pi)$ then follows from \cite[Appendix A]{darde2023null}. Given a sequence of complex numbers $(a_n)_{n}$, a solution of system \eqref{control system change variable} is given by
\begin{align}
    g(t,x,y) = \sum_{n > N} a_{n-1} g(t,x,n)e^{iny},
\end{align}
where the sum is taken finite, $N \in \mathbb{N}$ is taken sufficiently large as specified later, and $g(t,x,n)$ is the solution of system 
\begin{align} \label{adjoint system change variable fourier}
\left\{ \begin{array}{lcll}
     \left( \partial_t - \partial_x^2 + n^2 q(x)^2 + V(x) \right) g & = & 0, & (t,x) \in (0,T) \times (-L,L),\\
     g(t,x,n) & = & 0, & (t,x) \in (0,T) \times \partial (-L,L), \\
     g(0,x,n) & = & g_0(x,n), & x \in (-L,L).
    \end{array} \right.
\end{align}
Denote by $v_n$ an eigenfunction associated to the first eigenvalue $\lambda_n$ of the operator 
$G_{n,V} = - \partial_x^2 + n^2 q(x)^2 + V(x)$. Then $g(t,x,n) = e^{-\lambda_n t}v_n(x)$. More precisely, we choose $v_n$ as follows. \\

Recall that we denote, for $\beta \in \mathbb{C}$ with $Re(\beta) > 0$, the non-self-adjoint harmonic oscillator $H_\beta = - \partial_x^2 + \beta^2 x^2 $ on $\mathbb{R}$. Then $G_{n,V}$ is a perturbation of $H_{q'(0)n}$ restricted to $(-L,L)$. \\

Set $\Tilde{v}_n(x) = n^{1/4}e^{-nq'(0)x^2/2}$ the first eigenfunction of $H_{q'(0)n}$. We choose $v_n$ as  
\begin{align}\label{eqn: def vn spectral proj}
    v_n := \Pi_n \Tilde{v}_n,
\end{align}
where $\Pi_n$ is the spectral projector onto the first eigenspace associated to $\lambda_n$ of $G_{n,V}$. Our solution $g$ now writes as 
\begin{align}\label{eqn: form solution complement rectangle}
    g(t,x,y) = \sum_{n > N} e^{-\lambda_n t} a_{n-1} v_n(x)e^{iny}.
\end{align}

Assume now that observability \eqref{observability inequality Omega} holds in time $T > 0$ from $\tilde{\omega}$. \\

We plug our solutions \eqref{eqn: form solution complement rectangle}, with $v_n$ defined by \eqref{eqn: def vn spectral proj}, into \eqref{observability inequality Omega}, to obtain that there exists $C > 0$, such for any solution of the form \eqref{eqn: form solution complement rectangle} we have 
\begin{align}\label{eqn: obs form solution complement rectangle}
\int_\Omega |g(T,x,y)|^2 \ dx \ dy \leq C \int_0^T \int_{\Tilde{\omega}} |g(t,x,y)|^2 \ dx \ dy \ dt.
\end{align}

We will show that \eqref{eqn: obs form solution complement rectangle} implies an $L^2-L^\infty$ estimates on complex polynomials $p(z) = \sum_{n \geq N}a_nz^n$ with a zero of order $N$ at zero. \\

Let us first treat the left-hand side of \eqref{eqn: obs form solution complement rectangle}, showing that it can be bounded from below by some $L^2$-norm of complex polynomials, following the proof of \cite[Lemma 3.7]{darde2023null}.\\

Thanks to Proposition \ref{proposition estimates resolvent}, it follows from \cite[Proposition 3.6 and Proposition 4.2]{darde2023null} that $\|v_n\|_{L^2(\Omega_x)}$ is bounded below uniformly in $n$. Thus, thanks to Corollary \ref{corollary dissipation general grushin} from which we have $\lambda_n \sim nq'(0)$, and using the uniform lower bound on $\|v_n\|_{L^2(\Omega_x)}$, for $\epsilon > 0$, for some constants $c,C_\epsilon> 0$, and for $N > 0$ sufficiently large, we have
\begin{align*}
     \|g(T)\|_{D\left(G_V^{-1/2}\right)}^2 \ dx \ dy &=  \sum_{n > N} \frac{|a_{n-1}|^2}{\lambda_n} \| v_n \|_{L^2(\Omega_x)}^2 e^{-2\lambda_n T} \\
    &\geq c \sum_{n > N} \frac{|a_{n-1}|^2}{\lambda_n} e^{-2\lambda_n T} \\
    &\geq c e^{-2C_\epsilon T}\sum_{n > N} \frac{|a_{n-1}|^2}{nq'(0)(1+\epsilon)} e^{-2nq'(0)(1+\epsilon)T} \\
    &\geq \frac{c e^{-2C_\epsilon T}}{q'(0)(1+\epsilon)}\sum_{n > N} \frac{|a_{n-1}|^2}{n + 1} e^{-2nq'(0)(1+\epsilon)T}
\end{align*}

Therefore, letting $p = \sum_{n \geq N} a_nz^n$ be a complex polynomial with a zero of order $N$ at zero, since the functions $z \mapsto z^n$ are orthogonal in $L^2(D(0,R),dm)$ with $dm$ the complex Lebesgue measure, the above computations give 
\begin{align}\label{eqn: lower bound left hand side complement rectangle}
    \int_{\Omega} |g(T,x,y)|^2 \ dx \ dy \geq C \|p\|^2_{L^2\left(D\left(0,e^{-q'(0)(1+\epsilon)T}\right)\right)},
\end{align}
for some constant $C > 0$ that may differ with the one given by the observability inequality. \\

We now treat the right-hand side of \eqref{eqn: obs form solution complement rectangle}, still following the proof of \cite[Lemma 3.7]{darde2023null}.\\

Recall that the Agmon distance is defined by \eqref{agmon distance gamma = 1} 
\begin{align} 
    d_{\operatorname{agm}} : x \in (-L,L) \mapsto \int_0^x q(s) \ ds.
\end{align}
Let $\epsilon \in (0,1)$, and define $\gamma_{t,x}(n)$ by 
\begin{align}\label{eqn: definition error term}
   \gamma_{t,x}(n - 1) := e^{-t(\lambda_n - q'(0)n)} v_n(x)e^{n d_{\operatorname{agm}}(x)(1-\epsilon)}.
\end{align}
Then, the solutions prescribed by \eqref{eqn: form solution complement rectangle} can be written  
\begin{align}\label{eqn: form solution complement rectangle error term}
    g(t,x,y) = \sum_{n > N} a_{n-1} \gamma_{t,x}(n - 1) e^{n(iy - q'(0)t - (1-\epsilon)d_{\operatorname{agm}}(x))}
\end{align}
Introduce the change of variable 
\begin{align}\label{eqn: change of variable complex}
    (x,z) = (x, e^{iy - q'(0)t - (1-\epsilon)d_{\operatorname{agm}}(x)})
\end{align}
which is a diffeomorphism from $(0,T) \times \omega$ onto its image that we denote by $\mathcal{P}$, and which satisfies $\mathcal{P} \subset \Omega_x \times U$ with (see Figure \ref{fig:diffeo_koenig}) 
\begin{align}\label{eqn: definition U change variable complex}
    U &= D\left(0,e^{-(1-\epsilon)d_{\operatorname{agm}}(-a)}\right) \cup D\left(0,e^{-(1-\epsilon)d_{\operatorname{agm}}(b)}\right) \cup \{z \in \mathbb{C}, |z| < 1, \operatorname{arg}(z) \notin I \} \nonumber \\[6pt]
    &= D\left(0,e^{-(1-\epsilon)\min(d_{\operatorname{agm}}(-a),d_{\operatorname{agm}}(b))}\right) \cup \{z \in \mathbb{C}, |z| < 1, \operatorname{arg}(z) \notin I \}.
\end{align}

Under the change of variable \eqref{eqn: change of variable complex}, the right-hand side of \eqref{eqn: obs form solution complement rectangle} becomes, writing $g$ as given in \eqref{eqn: form solution complement rectangle error term},
\begin{align}\label{eqn: right side obs complement rectangle}
    \int_0^T \int_\omega |g(t,x,y)|^2 \ dx \ dy \ dt = \frac{1}{q'(0)} \int_\mathcal{P} \left| \sum_{n > N} a_{n-1} \gamma_{t,x}(n - 1) z^n  \right|^2 \frac{1}{|z|^2} \ dx \ dm(z). 
\end{align}

Letting again $p$ be a complex polynomial with a zero of order $N$ at zero, it writes as $p(z) = \sum_{n \geq N} a_n z^n$. We see that the right-hand side above is almost an integral on $p$, and we will try to bound it from above by some $L^\infty$-norm of $p$. \\

Introduce the operator $\gamma_{t,x}$ acting on complex polynomials as
\begin{align}
    \gamma_{t,x}(z\partial_z)\left( \sum_n a_n z^n \right) = \sum_n a_n \gamma_{t,x}(n) z^n.
\end{align}
This operator is introduced in \cite[eq. (3.19)]{darde2023null} in the case $V = 0$, and its properties are given by \cite[Lemma 3.5]{darde2023null}. Therefore, \eqref{eqn: right side obs complement rectangle} is bounded as
\begin{align*}
    \int_0^T \int_{\tilde{\omega}} |g(t,x,y)|^2 \ dx \ dy \ dt  &= \frac{1}{q'(0)} \int_{\mathcal{P}} \left| \gamma_{t,x}(z\partial_z)(p)(z) \right|^2  \ dx \ dm(z) \\
    &\leq C \sup_{(t,x) \in (0,T) \times \Omega_x} \|\gamma_{t,x}(z\partial_z)(p)\|_{L^\infty(U)}.
\end{align*}

We have now arrived to the important point of the proof. We need to bound one last time the right-hand side above by
\begin{align}\label{eqn: admitted upper bound}
    \sup_{(t,x) \in (0,T) \times \Omega_x} \|\gamma_{t,x}(z\partial_z)(p)\|_{L^\infty(U)} \leq C \|p\|_{L^\infty(\mathcal{V})},
\end{align}
for some constant $C > 0$ independent of $(t,x)$, and $\mathcal{V}$ an arbitrary small neighborhood of $\overline{U}$.\\ 

Let us admit for now that this is possible and conclude the proof of Theorem \ref{th gamma = 1 euclidean koenig}, following the exact same lines as for the proof of \cite[Theorem 1.3, Theorem 3.1]{darde2023null}. We have seen that if observability \eqref{observability inequality Omega} holds in time $T>0$ from $\tilde{\omega}$, then from \eqref{eqn: lower bound left hand side complement rectangle} and the admitted upper bound \eqref{eqn: admitted upper bound}, there exists $C> 0$ such that for every complex polynomial $p$ with a zero of order $N$ at zero, $N$ large enough, we have
\begin{align}\label{inequality polynomials complex}
    \|p\|^2_{L^2\left(D\left(0,e^{-q'(0)(1+\epsilon)T}\right)\right)} \leq C \|p\|_{L^\infty(\mathcal{V})},
\end{align}
where we recall that $\mathcal{V}$ is an arbitrary small neighborhood of $\overline{U}$ defined in \eqref{eqn: definition U change variable complex}. For every $\epsilon > 0$, for every $T > 0 $ such that
\begin{align}
    T \leq \frac{1 - \epsilon}{q'(0)(1+\epsilon)}\min(d_{\operatorname{agm}}(-a),d_{\operatorname{agm}}(b)),
\end{align}
there exists $z_0 \in D(0,e^{-q'(0)(1+\epsilon)T}) \setminus \overline{U}$ (see Figure \ref{fig:diffeo_koenig}). Hence, by Runge's theorem, there exists a sequence of polynomials $(\Tilde{p}_k)_k$ that converges to $1/(z-z^0)$, uniformly on every compact subset of $\mathbb{C} \setminus z_0[1,+\infty)$. We then choose the sequence of polynomials $p_k = z^{N+1}\tilde{p}_k$ to disprove \eqref{inequality polynomials complex}. Indeed, as $k \rightarrow + \infty$, the left-hand side of \eqref{inequality polynomials complex} explodes, while the right-hand side remains uniformly bounded. \\

Now, since $\epsilon$ can be chosen arbitrary small, the result of Theorem \ref{th gamma = 1 euclidean koenig} follows. \\

\begin{figure}
    \centering
    \includegraphics[width=0.5\linewidth]{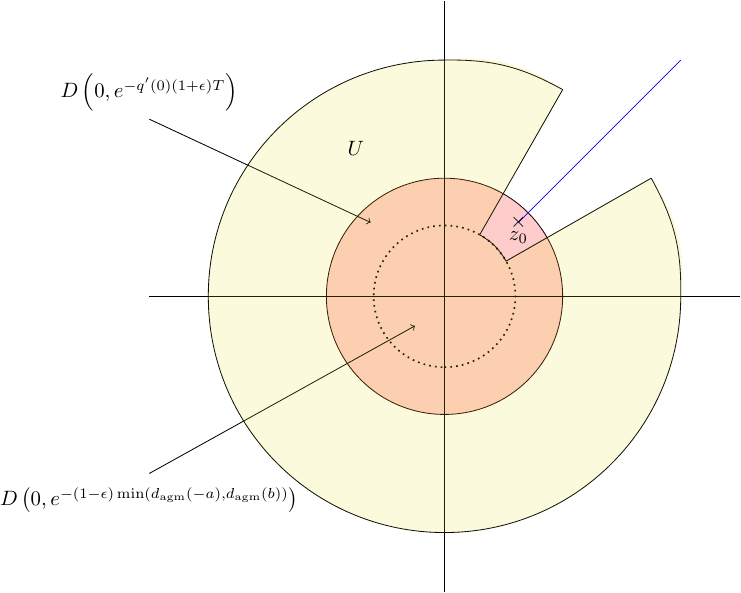}
    \caption{The yellow pacman shape is $U$. The disk in red is $D\left(0,e^{-(1-\epsilon)\min(d_{\operatorname{agm}}(-a),d_{\operatorname{agm}}(b))}\right) $, and it contains the dotted disk $D\left(0,e^{-q'(0)(1+\epsilon)T} \right)$. When $T$ is too small, there exists $z_0 \in D(0,e^{-q'(0)(1+\epsilon)T}) \setminus \overline{U}$. }
    \label{fig:diffeo_koenig}
\end{figure}

In particular, the sequence of complex numbers used in the solutions of the form \eqref{eqn: form solution complement rectangle} to disprove the observability inequality \eqref{eqn: obs form solution complement rectangle} are entirely determined by the sequence of polynomials $(\Tilde{p}_k)_k$ above. \\

To fully conclude on the present proof, recall that we are left with proving that for any complex polynomial with a zero of order $N$ at zero, with $N$ sufficiently large, we have 
\begin{align}\label{eqn: final estimate complex polynomials}
    \sup_{(t,x) \in (0,T) \times \Omega_x} \|\gamma_{t,x}(z\partial_z)(p)\|_{L^\infty(U)} \leq C \|p\|_{L^\infty(\mathcal{V})}.
\end{align}
This is \cite[Lemma 3.5]{darde2023null} which follows from \cite[Theorem 4.7]{darde2023null}, proved in \cite[Theorem 18]{CRMATH_2017__355_12_1215_0}, and the rest of this proof is dedicated to proving \eqref{eqn: final estimate complex polynomials}.\\

The idea is to observe that, at least not requiring for now the independence of $C>0$ on $(t,x)$ for now, this is possible if the function 
\begin{align*}
    z \mapsto \sum_{n\geq N} \gamma_{t,x}(n)z^n
\end{align*}
admits an holomorphic extension on $\mathbb{C} \setminus [1,+\infty)$. It has been shown in \cite[Theorem 18]{CRMATH_2017__355_12_1215_0} that a sufficient condition is that the error term $\gamma_{t,x}$ can be interpolated by some function in a certain class of holomorphic functions $\mathcal{S}(D)$ that we introduce below. \\

For the constant $C$ to be independent of $t,x$, we need the family of interpolations to be uniformly bounded in $\mathcal{S}(D)$ with respect to $t$ and $x$.  \\

From Section \ref{section spectral G_V complex}, for every $\theta \in [0,\frac{\pi}{2})$ there exists $r_\theta \geq 1$ such that Proposition \ref{proposition estimates resolvent} holds for any $\nu \in \Sigma_{\theta_0} \setminus D(0,r_{\theta_0})$. Following \cite[Definition 4.6]{darde2023null}, we define 
\begin{align*}
    D := \cup_{\theta \in [0,\pi/2)} \Sigma_{\theta} \setminus D(0,r_{\theta}),
\end{align*}
and $\mathcal{S}(D)$ to be the set of holomorphic function $f$ on $D$ such that for every $\theta \in [0,\pi/2),\delta > 0$ we have 
\begin{align*}
        \sup_{D \cap \Sigma_\theta} \left| f(z) e^{-\delta |z|}\right| < + \infty.
\end{align*}

Observe that $\mathcal{S}(D)$ is stable under multiplication. Hence, it is sufficient to show separately that both terms $e^{-t(\lambda_n - q'(0)n)}$ and $v_n(x)e^{n d_{\operatorname{agm}}(x)(1-\epsilon)}$ forming $\gamma_{t,x}(n)$ \eqref{eqn: definition error term} can be interpolated by an element of $\mathcal{S}(D)$. \\

Let us treat the first term $e^{-t(\lambda_n - q'(0)n)}$. Write $\lambda(z)$ to be the first eigenvalue of $-\partial_x^2 + z^2q(x)^2 + V(x)$ obtained in Section \ref{section spectral G_V complex}. With this notation, we have $\lambda_n = \lambda(n)$. Hence, the first term is interpolated by the function 
\begin{align}
    \mathcal{E}_t : z \mapsto e^{-t(\lambda(z) - q'(0)z)}.
\end{align}

Similarly, denoting $v_z = \Pi_z\left(z^{1/4}e^{-zq'(0)x^2/2}\right)$, the second term is interpolated by 
\begin{align}
    \mathcal{F}_x : z \mapsto v_z(x)e^{zd_{\operatorname{agm}}(x)(1-\epsilon)}.
\end{align}

From complex perturbation theory (see \textit{e.g.} \cite[Chapter 7.1]{Kato1995-xh}), both functions $\mathcal{E}_t$ and $\mathcal{F}_x$ are holomorphic.\\ 

Using Proposition \ref{proposition estimates resolvent}, we see that the interpolations $\mathcal{E}_t$ belong to $\mathcal{S}(D)$ and are uniformly bounded. Similarly, thanks to Proposition \ref{proposition agmon estimate complex} below, the interpolations $\mathcal{F}_x$ belong to $\mathcal{S}(D)$ and are uniformly bounded. For details on these last two facts, we refer the reader to \cite[Lemma 3.5]{darde2023null}. Thus, estimate \eqref{eqn: final estimate complex polynomials} is true, which concludes the proof.

\end{proof}

\begin{proposition}\label{proposition agmon estimate complex}
    Let $\theta_0 \in [0,\pi/2)$, and $\epsilon \in (0,\pi)$. There exists $C > 0$ such that for $\nu \in \Sigma_{\theta_0}$, $|\nu|$ large enough, we have
\begin{align}
    \int_I \left|v_\nu(x)e^{\nu(1-\epsilon)d_{\operatorname{agm}}(x)}\right|^2 \ dx &\leq C|\nu|, \\
    \|v_\nu(x)e^{\nu(1-\epsilon)d_{\operatorname{agm}}(x)}\|_{L^\infty(I)} &\leq C|\nu|.
\end{align}
\end{proposition}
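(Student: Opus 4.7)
The strategy is to reduce the estimate on $v_\nu$ to an explicit Gaussian estimate for the first eigenfunction of the non-self-adjoint harmonic oscillator $H_{q'(0)\nu}$, which can be written down in closed form, and then transport the bound back via the resolvent comparison of Proposition \ref{proposition estimates resolvent}. Let $\Gamma_\nu$ be a small contour, lying inside $\tilde{\mathcal Z}_\nu$, enclosing only the first eigenvalue $\lambda(\nu)$ of $G_{V,\nu}$ (which by Corollary \ref{corollary dissipation general grushin} is within distance $o(|\nu|)$ of $q'(0)\nu$, the first eigenvalue of $H_{q'(0)\nu}$). Using the Cauchy integral representation of the spectral projector, write
\begin{equation*}
v_\nu(x) \;=\; \frac{1}{2\pi i}\oint_{\Gamma_\nu}(z-G_{V,\nu})^{-1}\tilde v_\nu\,dz \;=\; \mathbbm{1}_I^{*}\,\Pi^{\mathrm{HO}}_\nu\tilde v_\nu \;+\; E_\nu,
\end{equation*}
where $\Pi^{\mathrm{HO}}_\nu$ is the spectral projector of $H_{q'(0)\nu}$ onto its first eigenspace and, by Proposition \ref{proposition estimates resolvent} together with the $O(|\nu|)$ length of $\Gamma_\nu$, the remainder satisfies $\|E_\nu\|_{L^2(I)} = o(1)$.

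Since the first eigenfunction of $H_{q'(0)\nu}$ on $\mathbb{R}$ is proportional to $e^{-q'(0)\nu x^2/2}$, the function $\tilde v_\nu(x) = \nu^{1/4}e^{-q'(0)\nu x^2/2}$ already is (up to an $O(1)$ scalar) this eigenfunction restricted to $I$, so $\mathbbm{1}_I^{*}\Pi^{\mathrm{HO}}_\nu\tilde v_\nu$ is an $O(1)$ multiple of $\tilde v_\nu\vert_I$. This reduces the main task to bounding
\begin{equation*}
\bigl|\tilde v_\nu(x)\,e^{\nu(1-\epsilon)d_{\mathrm{agm}}(x)}\bigr| \;=\; |\nu|^{1/4}\exp\!\Bigl(\mathrm{Re}\bigl[-\tfrac{q'(0)\nu}{2}x^2 + \nu(1-\epsilon)d_{\mathrm{agm}}(x)\bigr]\Bigr).
\end{equation*}
Using $d_{\mathrm{agm}}(x)=\tfrac{q'(0)}{2}x^2+O(x^3)$ near $0$, the bracketed quantity equals $-\tfrac{\epsilon q'(0)}{2}\nu\,x^2 + \nu\,O(x^3)$; since $\mathrm{Re}(\nu)\geq\cos(\theta_0)|\nu|$, for $|x|$ smaller than some $\eta>0$ (depending only on $\epsilon$, $\theta_0$, and $q$) the cubic term is absorbed into the quadratic, and the pointwise bound $C|\nu|^{1/4}e^{-c|\nu|x^2}$ follows. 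Integrating a Gaussian of width $|\nu|^{-1/2}$ yields an $L^2$ bound of order $1$ and an $L^\infty$ bound of order $|\nu|^{1/4}$ on $\{|x|\leq\eta\}$. On the complement $\eta\leq|x|\leq L$, the Gaussian gives $|\tilde v_\nu(x)|\lesssim|\nu|^{1/4}e^{-c|\nu|\eta^2}$, which for $|\nu|$ large beats the exponential growth $e^{(1-\epsilon)|\nu|d_{\mathrm{agm}}(L)}$.

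The main obstacle is that one cannot estimate $E_\nu\,e^{\nu(1-\epsilon)d_{\mathrm{agm}}(x)}$ by the product of two $L^2$-bounds, because $|e^{\nu(1-\epsilon)d_{\mathrm{agm}}(x)}|$ blows up with $|\nu|$. The fix is to conjugate the entire Cauchy formula: one rewrites $v_\nu\,e^{\nu(1-\epsilon)d_{\mathrm{agm}}}$ as the Cauchy integral of the conjugated resolvent $e^{\nu(1-\epsilon)d_{\mathrm{agm}}}(G_{V,\nu}-z)^{-1}e^{-\nu(1-\epsilon)d_{\mathrm{agm}}}$ acting on $\tilde v_\nu\,e^{\nu(1-\epsilon)d_{\mathrm{agm}}}$, and compares with the analogous quantity for $H_{q'(0)\nu}$. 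The conjugation introduces a first-order drift which, combined with the bounded potential $V$, fits into the perturbative framework of Proposition \ref{proposition estimates resolvent}; the loss of a factor $|\nu|$ (arising from the length of $\Gamma_\nu$) is exactly what the statement tolerates. Finally, the $L^\infty$ bound is obtained from the $L^2$ bound by standard elliptic regularity applied to the rescaled variable $s=|\nu|^{1/2}x$, together with the same localization argument that separates the regime $|x|\leq\eta$ from its complement.
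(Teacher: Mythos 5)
Your approach departs significantly from the paper's: the paper proves the weighted bounds directly via an Agmon energy identity (multiply $(G_{V,\nu}-\lambda_\nu)v_\nu=0$ by the weighted conjugate, integrate by parts, and use the sign of $|\nu|^2 q^2(1-(1-\epsilon)^2)+V-\mu_\nu$), which is a closed, self-contained argument that does not decompose $v_\nu$ at all. Your decomposition of $v_\nu$ into a Gaussian main term plus a resolvent-controlled remainder has two genuine gaps, either of which sinks the argument.

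First, the main-term bound is false for general $q$ satisfying \ref{H2}. The inequality you invoke for $\eta\le|x|\le L$, namely that $|\nu|^{1/4}e^{-c|\nu|\eta^{2}}$ ``beats'' $e^{(1-\epsilon)|\nu|d_{\mathrm{agm}}(L)}$, is the opposite of true whenever $c\eta^{2}<(1-\epsilon)d_{\mathrm{agm}}(L)$ — and since $\eta$ is the small localization radius forced by the Taylor expansion, this is exactly the regime one is in. More structurally, the quantity
\begin{equation*}
\mathrm{Re}\Bigl[\nu\Bigl(-\tfrac{q'(0)}{2}x^{2}+(1-\epsilon)d_{\mathrm{agm}}(x)\Bigr)\Bigr]
\end{equation*}
need not be $\le 0$ on all of $I$: if $q(x)>q'(0)x$ for $x>0$ (e.g.\ $q(x)=e^{x}-1$), then $d_{\mathrm{agm}}(x)>q'(0)x^{2}/2$ and, for $\epsilon$ small and $x$ near $L$, the bracket is strictly positive, so the weighted Gaussian $\tilde v_\nu e^{\nu(1-\epsilon)d_{\mathrm{agm}}}$ blows up exponentially in $|\nu|$. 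The Gaussian is only the correct approximation near $x=0$; globally the eigenfunction $v_\nu$ decays like $e^{-\nu d_{\mathrm{agm}}(x)}$ (WKB), and it is precisely this \emph{non-Gaussian} decay that the proposition is asserting. Replacing $v_\nu$ by a Gaussian discards the very thing one needs to prove.

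Second, the proposed fix for the remainder — conjugating the Cauchy contour formula by $e^{\nu(1-\epsilon)d_{\mathrm{agm}}}$ and invoking Proposition \ref{proposition estimates resolvent} ``in the perturbative framework'' — is unsubstantiated. The conjugated operator picks up a drift $2\nu(1-\epsilon)q(x)\partial_x$ of size $O(|\nu|)$; this is nothing like the bounded potential $V$ that Proposition \ref{proposition estimates resolvent} actually handles, and the resolvent norm of the conjugated operator, $\|e^{\nu(1-\epsilon)d_{\mathrm{agm}}}(G_{V,\nu}-z)^{-1}e^{-\nu(1-\epsilon)d_{\mathrm{agm}}}\|$, is precisely the sort of weighted bound whose existence you are trying to establish — the argument is circular without an independent estimate. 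Redoing the full resolvent analysis for the non-self-adjoint operator with $O(|\nu|)$ drift is a substantial undertaking and is not what Proposition \ref{proposition estimates resolvent} provides. The paper sidesteps all of this: the Agmon identity gives the weighted $L^{2}$ bound in one step from the sign condition on the conjugated potential, and the $L^{\infty}$ bound follows by standard Sobolev embedding in the rescaled variable, as in \cite[Corollary 4.5]{darde2023null}.
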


\begin{proof}
We have the following Agmon equality from the exact same computations as in \cite[Proposition 4.3]{darde2023null},
\begin{align*}
    \|v_\nu'\|_{L^2(I)}^2 + \int_I (|\nu|^2q(x)(1-(1-\epsilon)^2) + V - \mu_\nu )|v_\nu(x)|^2 \ dx = 0,
\end{align*}
with 
\begin{align*}
    \mu_\nu := \frac{Re(e^{-i \arg(\nu)}\lambda_\nu)}{\cos(\arg(\nu))}.
\end{align*}
For some constant $C > 0$, for $|\nu|$ large enough, we have for some $c>0$ and every $x$ such that $d_{\operatorname{agm}(x)}|\nu| > C$,
\begin{align*}
    |\nu|^2q(x)(1-(1-\epsilon)^2) + V - \mu_\nu \geq c|\nu|,
\end{align*}
since in the limit, the potential, whether positive or negative, becomes negligible in front of the other terms. The proposition follows now from the exact same arguments as in \cite[Corollary 4.5]{darde2023null}.
\end{proof}

\begin{remark}\label{rmk: disprove control regular solutions}
    We observe from the above proof that Theorem \ref{th gamma = 1 euclidean koenig} is still true if we consider initial states in $D(G^s)$, for any $s > 0$. In this case, we have in the left-hand side of the observability inequality \eqref{observability inequality Omega} a $D(G^{-s})$-norm, and still an $L^2((0,T)\times \omega)$ norm in the right-hand side. Thus, we just have to bound from below the left-hand side. We have, following the computations of the beginning of the proof, 
    \begin{align*}
     \|g(T)\|_{D(G^{-s})}^2 \ dx \ dy &=  \sum_{n > N} \frac{|a_{n-1}|^2}{{\lambda_n}^{2s}} \| v_n \|_{L^2(\Omega_x)}^2 e^{-2\lambda_n T} \\
    &\geq \frac{C e^{-2C_\epsilon T}}{q'(0)^{2s}(1+\epsilon)^{2s}}\sum_{n > N} \frac{|a_{n-1}|^2}{n^{2s} + 1} e^{-2nq'(0)(1+\epsilon)T} \\
    &= \frac{C e^{-2C_\epsilon T}}{q'(0)^{2s}(1+\epsilon)^{2s}}\sum_{n \geq N} \frac{|a_{n}|^2}{(n+1)^{2s} + 1} e^{-2(n+1)q'(0)(1+\epsilon)T}. 
\end{align*}
Let $T > 0$ be an horizon time for which we know that \eqref{inequality polynomials complex} cannot hold. Denote by $(p_k)_k$ the sequence of polynomial refuting the inequality \eqref{inequality polynomials complex} in the above proof. That is, $p_k = z^{N+1}\tilde{p}_k = z^{N+1}\sum_n a_{n,k}z^n$, where $\tilde{p}_k$ converges uniformly on every compact subsets of $\mathbb{C} \setminus (z_0[1,+\infty))$ to $(z-z_0)^{-1}$, with $z_0$ chosen as in the proof of Theorem \ref{th gamma = 1 euclidean koenig}. We stress that in particular, we have $|z_0| < e^{-q'(0)(1+\epsilon)T}$. Thus, the solutions constructed from the proof of Theorem \ref{th gamma = 1 euclidean koenig} that refutes the observability \eqref{observability inequality Omega} are given by 
\begin{align*}
    g_k(t,x,y) = \sum_{n \geq N}a_{n-N,k}v_{n+1}(x)e^{-\lambda_{n+1}t + i(n+1)y}.
\end{align*}
Let us see that these $g_k$ also disprove the $D(G^{-s})-L^2$ observability. If observability would hold in time $T > 0$, we would obtain that there exists $C>0$, different from the observability constant, such that 
\begin{align}\label{eqn: Gs obs}
    \sum_{n > N} \frac{|a_{n-N-1,k}|^2}{n^{2s} + 1} e^{-2nq'(0)(1+\epsilon)T} \leq C \int_0^T \int_\omega |g_k(t,x,y)|^2 \ dx \ dy \ dt,
\end{align}
with the right-hand side uniformly bounded. We need to show that the left-hand side diverges as $k \rightarrow + \infty$. By the Cauchy formula, and by the uniform convergence of $(\tilde{p}_k)$ to $(z-z_0)^{-1}$, we have that for any $n \geq 0$, $a_{n,k}$ converges to $-1/z_0^{n+1}$. Thus, 
\begin{align*}
    \frac{|a_{n-N-1,k}|^2}{n^{2s} + 1} \rightarrow \frac{1}{n^{2s}} \left| \frac{1}{z_0}\right|^{2(n-N)}, \quad \text{as} \quad k \to + \infty.
\end{align*}
By Fatou's Lemma, we see that the left-hand term in \eqref{eqn: Gs obs} will diverge to infinity if $|z_0| < e^{-q'(0)(1+\epsilon)T}$, which is the case by our choice of horizon time $T > 0$. This concludes. 
\end{remark}

\begin{remark}
    It would be interesting to obtain this result when $r$ is not identically one, which seems to be a complicated task. Indeed, the holomorphic interpolation argument becomes a challenging issue. In this case, the first eigenvalue $\lambda_n$ becomes $\lambda_{\xi_n}$, where we recall that $\xi_n^2$ is the $n$-th eigenvalue of $\partial_y(r(y)^2\partial_y)$, and the associated eigenfunctions are now perturbations of sinusoidal, at least for large values of $n$. Ignoring the eigenfunctions issue as we believe it is not the most important one, for almost every $(t,x)$ we want to find an holomorphic function $F_{t,x} \in \mathcal{S}(D)$ such that $F_{t,x}(n) = \gamma_{t,x}(n)$. This first amounts to interpolate for example the term $e^{-t(\lambda_{\xi_n} - q'(0)n)}$ at each $n$. We saw that when $r = 1$, the interpolation is directly given by the same formula replacing $n$ by $z$, as the holomorphy is ensured by perturbation theory arguments. But when $r$ is not identically one, we must find $G$, holomorphic in the right half-plane (see \cite{CRMATH_2017__355_12_1215_0}), such that $G(n) = \xi_n$. Indeed, in this case, the composite function $z \mapsto G(z) \mapsto \lambda(G(z))$ interpolates correctly $\lambda_{\xi_n}$. The existence of an interpolation is not a complicated problem, but one must keep in mind that we have heavy restrictions on it since the interpolation of the error term must in the end belong to $\mathcal{S}(D)$.
\end{remark}

\section{Reduction to a local problem and proof of Theorem \ref{th nullcontr manifold}}\label{section proofs of theorem on manifolds}

In this section, we prove our main result, Theorem \ref{th nullcontr manifold}. Throughout this section, we set ourselves under the geometric setting prescribed by Section \ref{section setting}. Recall that, according to the statement of Theorem \ref{th nullcontr manifold}, when working under \ref{H1i}, our almost-Riemannian structure on $\mathcal{M}$ is of step $\gamma + 1 =2$, and under \ref{H1ii}, it is of step $\gamma + 1 \geq 2$. This will always be implied in our statements when we invoke \ref{H1i} or \ref{H1ii}. \\

We fix in system \eqref{control grushin system M} the control zone $\omega$  satisfying one of the conditions outlined in \ref{H1}. The singular set $\mathcal{Z}$ satisfies the assumptions of \ref{H0}, and, without loss of generalities, it is diffeomorphic to $\Omega_y = (0,\pi), \ \mathbb{S}^1$, or $\mathbb{R}$. The distance $\delta$ to the singularity satisfies \ref{H0'}. \\

We recall that we denote by $\mathcal{U}$ a tubular neighborhood constructed as in Section \ref{section setting}, under \ref{H1}, and such that \ref{H2} and \ref{H3} hold. That is, $\mathcal{U}$ is contained in $\mathbb{M} \setminus \omega$, and is diffeomorphic to $(-L,L) \times \Omega_y$ for some $0 < L \leq \min(L_0,R) < \min(d_{\operatorname{sR}}(\omega, \mathcal{Z}), \operatorname{inj}(\mathcal{Z}))$, where $L_0 > 0$ and $R>0$ are respectively such that \ref{H2}, \ref{H3} and \ref{H0'} hold. \\

We also recall that in $\mathcal{U}$, the sub-Riemannian structure is generated by two vector fields $X = \partial_x$ and $Y = q(x)r(y)\partial_y$, with $q$ and $r$ satisfying the conditions outlined in \ref{H2}.\\ 

To proceed to the proof, we need to characterize how the boundary of such a neighborhood can intersect the boundary of $\mathbb{M}$ under \ref{H1ii}. In the Lemma below and what follows, the notation $\simeq$ signifies \textit{diffeomorphic to}.

\begin{lemma}\label{lemma: bndry tub neighb}
     Let $\mathcal{U}$ be a tubular neighborhood as prescribed above under \ref{H1ii}, that is diffeomorphic to $(-L,L) \times \Omega_y$ for some $0 < L \leq \min(L_0,R) < \min(d_{\operatorname{sR}}(\omega, \mathcal{Z}), \operatorname{inj}(\mathcal{Z}))$. Then, for $L$ small enough,
    \begin{enumerate}[label = (\roman*)]
    \item if $\mathcal{Z} \simeq \mathbb{S}^1$ or $\mathbb{R}$, we have $\partial \mathcal{U} \cap \partial \mathcal{M} = \emptyset$,
    \item if $\mathcal{Z} \simeq (0,\pi)$, we have $\partial \mathcal{U} \cap \partial \mathcal{M} \simeq \left( [-L,L] \times \{0\} \right) \cup \left( [-L,L] \times \{\pi \} \right)$.
\end{enumerate}
\end{lemma}

\begin{proof}
    We need to show that for $L>0$ sufficiently small, we have $\mathcal{U} = \{ p \in \mathcal{M}, \delta(p) < L\}$. Indeed, assume that this holds.\\
    
    If $\mathcal{Z} \simeq \mathbb{S}^1$, then $\partial \mathcal{U} = \{ p \in \mathcal{M}, \delta(p) = L\} \simeq \left(\{-L\} \times \mathbb{S}^1 \right) \cup \left( \{L\} \times \mathbb{S}^1 \right)$. In this case, since $\mathcal{Z} \subset \operatorname{Int}(\mathcal{M})$, the lemma directly follows as $L$ can be chosen small enough so that $\mathcal{U} \subset \operatorname{Int}(\mathcal{M})$. The same idea goes for $\mathcal{Z} \simeq \mathbb{R}$. \\
    
    If $\mathcal{Z} \simeq (0,\pi)$, let $p = (0,0) \in \partial\mathcal{U} \cap \partial\mathcal{Z} \cap \partial\mathcal{M}$ by \ref{H0}, and consider, in coordinates, the curve $c : t \in [0, L] \mapsto c(t) = (0,t)$ which belongs to $\partial \mathcal{U}$. Assume that $c$ has exited $\partial \mathcal{M}$ at some time $t_0 \in (0,L)$. Then, there exists an open ball around $c(t_0)$ of radius $\epsilon > 0$ sufficiently small such that $B(c(t_0),\epsilon) \subset \operatorname{Int}(\mathcal{M})$ and $t_0 + \epsilon < L$, and a point $p' \in B(c(t_0),\epsilon) \setminus \mathcal{U}$. By continuity of $\delta$ from \ref{H0'}, we have $\delta(p') \leq t_0 + \epsilon < L$. Hence $p' \in \{ p \in \mathcal{M}, \delta(p) < L\}$ which is a contradiction. The other parts of $\partial \mathcal{U}$ can be treated the same way. Taking $L$ small enough directly concludes the proof. \\

    It remains to prove that for $L \in (0,\min(d_{\operatorname{sR}}(\omega, \mathcal{Z}), \operatorname{inj}(\mathcal{Z}))$ we have $\mathcal{U} = \{ p \in \mathcal{M}, \delta(p) < L\}$. \\ 
    
    The set equality is proved in the case $\mathcal{Z} \simeq \mathbb{S}^1$ in \cite[Proposition 3.1]{Franceschi2020-gs}, \cite[Theorem 3.7]{rossi2022relative}. When $\mathcal{Z} \simeq \mathbb{R}$, it still holds thanks to the injectivity radius assumption in \ref{H0} (see \cite[footnote p.98]{Franceschi2020-gs}). \\
    
    Let us focus on the remaining case. We obviously have the inclusion $\mathcal{U} \subset \{ p \in \mathcal{M}, \delta(p) < L\}$, so let us prove the reciprocal one. Let $p_0 \in \{q \in \mathcal{M}, \delta(q) < L\} \setminus \mathcal{U}$. Let $q \in \mathcal{Z}$ be the unique point in $\mathcal{Z}$ such that $\delta(p_0) = d_{\operatorname{sR}}(p_0,q)$. Then, there exists $c_0 : [0,1] \rightarrow \mathcal{M}$ a minimizing geodesic such that $c_0(0) = q$ and $c_0(1) = p_0$. Assume that there exists $p_1 \in \mathcal{U}$ in the same connected component of $\{q \in \mathcal{M}, \delta(q) < L\}$ as $p_0$ such that $\delta(p_1) = d_{\operatorname{sR}}(p_0,q) = \delta(p_0)$. We know that $p_1$ exists  since it is either $p_1 = (q,\delta(p_0))$ or $p_1 = (q, -\delta(p_0))$ depending on where $p_0$ is situated. There exists a minimizing normal geodesic $c_1$ such that $c_1(0) = q$ and $c_1(1) = p_1$. Observe that the minimizing geodesics $c_i$, $i=0,1$, must satisfy $c_i'(t) = \alpha(t)\nabla\delta(c_i(t))$, where $\alpha : [0,1] \rightarrow \mathbb{R}^+$ is smooth. Since we assumed smoothness of $\delta$ in \ref{H0'}, we must have that there exists $\beta > 0$ such that $c_0'(0) = \beta c_1'(0)$. Since $c_0$ and $c_1$ are both integral curves of $\nabla \delta$ starting from the same point, we must have that there exists a smooth function $\beta : [0,1] \rightarrow \mathbb{R}^+$ such that $\beta(0) = \beta$ and $c_1'(t) = \beta(t)c_0'(t)$. That is, $c_1$ is a reparametrization of $c_0$, and hence $c_0 (1) = c_1(1)$. Thus, $p_0 = p_1$ and the inclusion $\{ p \in \mathcal{M}, \delta(p) < L\} \subset \mathcal{U}$ follows. This concludes the proof.
\end{proof}

\begin{remark}
    What the above proof says, is that assuming smoothness of $\delta$ as in \ref{H0'}, we only have a unique way of exiting $\mathcal{Z}$ in each side of it, by means of a minimizing geodesic (minimizing the distance to $\mathcal{Z}$), which is a normal one, from any point of the singularity. This geodesic must be an integral curve for $\nabla \delta$. If we exit from $q \in \operatorname{Int}(\mathcal{Z})$, this geodesic exists, is unique, and must be normal (see \cite[Proposition 2.7]{Franceschi2020-gs} or \cite[Proposition 2.2]{bossio2024tubes}). However, this is not true in all generalities from $q \in \partial \mathcal{Z}$, as it may be done by means of different normal geodesics, or abnormal ones. However, the existence of a normal geodesic is ensured. However, the smoothness assumption from \ref{H0'} forces all possibilities of leaving $\mathcal{Z}$ from $q$ to be along the same path. That is, even if $q \in \partial \mathcal{Z}$, this geodesic must be unique (modulo reparametrization) and normal. This in turns forces $\mathcal{U} = \{ p \in \mathcal{M}, \delta(p) < L\}$. 
\end{remark}

\subsection{Reduction process}\label{section reduction process}

We fix from now on $\mathcal{U}$ to be a tubular neighborhood as prescribed by Lemma \ref{lemma: bndry tub neighb} under \ref{H1ii}, which we recall is diffeomorphic to $(-L,L) \times \Omega_y$. Under \ref{H1i}, we also fix $\mathcal{U}$ similarly to be a tubular neighborhood of a point $p \in \mathcal{Z}$ such that $\partial\mathcal{U} \subset \operatorname{Int}(\mathcal{M})$, which is always possible. Under \ref{H1i}, $\mathcal{U} \simeq \Omega = (-L,L) \times (0,\pi)$ and $\partial \mathcal{U} \simeq \partial\Omega$. \\

We show in this subsection that null-controllability of system \eqref{control grushin system M} implies the internal null-controllability of initial states in the form domain $D \left(\Delta_\mathcal{U}^{1/2} \right)$ of system 
\begin{align}\label{control grushin system local}
        \left\{ \begin{array}{lcll}
        \partial_t f - \Delta_\mathcal{U} f & = & \mathbf{1}_{\omega^\epsilon}(x,y) u(t,x,y), & (t,x,y) \in (0,T) \times \mathcal{U},\\
        f(t,x,y) & = & 0, & (t,x,y) \in (0,T) \times \partial \mathcal{U}, \\
        f(0,x,y) & = & f_0(x,y), & (x,y) \in \mathcal{U}.
        \end{array} \right.
    \end{align}
For simplicity of the presentation, we shall denote the elements of $\mathcal{U}$ by $(x,y)$ with $x \in (-L,L)$ and $y \in \Omega_y$.  

\begin{proposition}\label{proposition internal M to U}
    Under the fixed setting previously prescribed, assume that system \eqref{control grushin system M} is null-controllable in time $T > 0$ from $\omega$ that satisfies \ref{H1i}. Let $\epsilon > 0$, and define 
    \begin{align*}
        \omega^\epsilon = \left\{ (x,y) \in \mathcal{U}, \quad x \in (-L,-L+\epsilon) \cup (L-\epsilon,L), \quad y \in (0,\epsilon) \cup (\pi - \epsilon, \pi) \right\}.
    \end{align*}
    Then, for every $\epsilon > 0$, for every $f_0 \in D \left(\Delta_\mathcal{U}^{1/2} \right)$, there exists a control $u \in L^2((0,T) \times \omega^\epsilon)$ such that the associated solution of system \eqref{control grushin system local} satisfies $f(T) = 0$ in $L^2(\mathcal{U})$. \\

    Moreover, if the control zone $\omega$ in system \eqref{control grushin system M} satisfies \ref{H1ii}, then the same conclusion holds with  
    \begin{align*}
        \omega^\epsilon = \left\{ (x,y) \in \mathcal{U}, \quad x \in (-L,-L+\epsilon) \cup (L-\epsilon,L) \right\},
    \end{align*}
    for every $\epsilon \in (0,L)$.
\end{proposition}

We recall that when working under \ref{H1i}, our structure is of step $\gamma + 1 =2$, and under \ref{H1ii}, it is of step $\gamma + 1 \geq 2$. In the second part of the above Proposition, if $\gamma + 1 > 2$, and $\epsilon$ is such that $\omega^\epsilon = \mathcal{U}$, the conclusion of our proof of Proposition \ref{proposition internal M to U} is no longer true. Using our approach via cutoff arguments, when the control zone $\omega^\epsilon$ intersects the singularity, one needs to be careful on the regularity of the initial states in system \eqref{control grushin system local} that can be steered to $0$ by means of $L^2$-controls induced by the assumption that system \eqref{control grushin system M} is null-controllable. For a structure of step $k$, one would at least be required to consider initial states in $D\left( \Delta_\mathcal{U}^{\frac{k-1}{2}}\right)$. However, in the proof below, some extension arguments would need to be carefully checked. Thus, under \ref{H1ii}, for simplicity, we restrict $\epsilon > 0$ to be small so that $\omega^\epsilon$ is located within the elliptic region, and so that our argument works when $\gamma + 1 > 2$.     

\begin{proof}
Let $T>0$. Assume that we are under \ref{H1i}, and that system \eqref{control grushin system M} is null-controllable in time $T>0$ from $\omega$. The following arguments hold due to the analyticity of the semigroup associated to our operators $\Delta$ and $\Delta_\mathcal{U}$. \\

Let $f_0 \in D \left( \Delta_{\mathcal{U}}^{1/2} \right)$. Extend it by $0$ outside of $\Omega$. Then, the function $\mathbf{1}_\mathcal{U}f_0$ belongs to the form domain $D\left( \Delta^{1/2}\right)$. Indeed, this can be seen by approximating $f_0$ by smooth compactly supported functions in $\mathcal{U}$, that are extended outside of $\mathcal{U}$ by zero. Then, as $D\left( \Delta^{1/2}\right)$ is the completion of $C_c^\infty(\mathcal{M})$ with respect to the horizontal Sobolev norm 
\begin{align*}
    \|u\|_{D(\Delta^{1/2})}^2 = \int_\mathcal{M} \sum_{i=1}^m |X_i u(p)|^2 \ d\mu(p), 
\end{align*}
where the family $\left\{ X_1,...X_m \right\}$ is a generating frame for the sub-Riemannian structure, this concludes.
\\

Now, by assumption, there exists a control $v \in L^2((0,T) \times \omega, dtd\mu)$ such that the solution $f_{\operatorname{man}}$ of system \eqref{control grushin system M} with initial state $\mathbf{1}_\mathcal{U} f_0$ satisfies $f_{\operatorname{man}}(T) = 0$. Denote now by $f_{\operatorname{adj}}$ the solution of system \eqref{control grushin system local} with $u = 0$ and with initial state $f_0$. We set
\begin{align*}
    f_{\operatorname{tub}} := \eta \theta f_{\operatorname{adj}} + (1- \theta)f_{\operatorname{man}},
\end{align*}
where $\eta \in C^\infty([0,T])$ and $\theta = \theta_1\theta_2 \in C^\infty(\mathcal{U})$ are defined by 
\begin{align*}
    \left\{ \begin{array}{lllr}
        \theta_1(x) & = & 1, & x \in [-L, -L + \frac{\epsilon}{2}] \cup [L - \frac{\epsilon}{2}, L], \\  
        \theta_1(x) & = & 0, & x \in [-L + \epsilon,L - \epsilon], \\
        \theta_1(x) & \in & [0,1], & x \in [-L, L],
    \end{array} \right.
\quad
    \left\{ \begin{array}{lllr}
        \theta_2(x) & = & 1, & x \in [0, \frac{\epsilon}{2}] \cup [\pi - \frac{\epsilon}{2}, \pi], \\  
        \theta_2(x) & = & 0, & x \in [\epsilon, \pi - \epsilon], \\
        \theta_2(x) & \in & [0,1], & x \in [0,\pi],
    \end{array} \right.
\end{align*}

and

\begin{align}
    \left\{ \begin{array}{lcll}
        \eta(t) & = & 1, & t \in [0,\frac{T}{3}],  \\  
        \eta(t) & = & 0, & t \in [\frac{2T}{3},T], \\
        \eta(t) & \in & [0,1], & t \in [0,T].
    \end{array} \right.
\end{align}

We observe that in $\mathcal{U}$, $f_{\operatorname{tub}}(0) = f_0$, $f_{\operatorname{tub}}(T) = 0$, and $f_{\operatorname{tub}}$ values zero at the boundary. Moreover, computations in coordinates using \eqref{definition Delta} show that applying the parabolic operator $\partial_t - \Delta_\mathcal{U}$ to $f_{\operatorname{tub}}$ produces a source term $h$ localized in $\omega^\epsilon$, that will depend on $f_{\operatorname{adj}}$, $f_{\operatorname{man}}$ and their first derivatives with respect to $x$ and $y$.\\

To conclude on the proof, we need to make sure that $h \in L^2((0,T) \times \omega^\epsilon)$. A sufficient condition for this is that $f_{\operatorname{adj}}$ and $f_{\operatorname{man}}$ both belong to $L^2 \left((0,T);H^1(\mathcal{U}) \right)$.\\

Let us first observe that since $f_0 \in D \left( \Delta_{\mathcal{U}}^{1/2} \right)$, then $f_{\operatorname{adj}} \in L^2((0,T),D \left(\Delta_\mathcal{U} \right))$. Subelliptic estimates (see e.g. \cite[Theorem 1.5 eq. (1.6)]{Laurent2022-ca}), stating that $D(\Delta_\mathcal{U}) \subset H^{2/k}(\mathcal{U})$ where $k$ is the step of the structure, then infer that for a.e. $t \in [0,T]$, $f_{\operatorname{adj}}(t) \in H^1(\mathcal{U})$, since, as we work under \ref{H1i}, the step of the structure is $2$. Thus, we indeed have $f_{\operatorname{adj}} \in L^2 \left((0,T);H^1(\mathcal{U}) \right)$. \\

Now, to treat $f_{\operatorname{man}}$, we write it using the Duhamel formula
\begin{align*}
    f_{\operatorname{man}}(t) = e^{-t\Delta}\mathbf{1}_\mathcal{U}f_0 + \int_0^T e^{-(t-s)\Delta}\mathbf{1}_\omega v(s) \ ds. 
\end{align*}
We have that $e^{- \cdot \Delta}\mathbf{1}_\mathcal{U}f_0 \in L^2((0,T),D \left(\Delta\right))$, and thus belongs to $L^2 \left((0,T);H^1(\mathcal{U}) \right)$ using the subelliptic estimate \cite[Theorem 1.5 eq. (1.6)]{Laurent2022-ca} as above. For the non-homogeneous part, that we denote by $F_{\operatorname{man}}$, we use maximal $L^2$-regularity of our operator $\Delta$, which gives us the existence of a constant $C>0$ such that 
\begin{align*}
    \|\Delta F_{\operatorname{man}}\|_{L^2\left((0,T),L^2(\mathcal{M},d\mu) \right)}^2 \leq C  \|\mathbf{1}_\omega v\|_{L^2\left((0,T),L^2(\mathcal{M},d\mu) \right)}^2 < \infty.
\end{align*}
That is, $F_{\operatorname{man}} \in L^2 \left( (0,T);D(\Delta)\right)$. Thus, using once again the subelliptic estimate \cite[Theorem 1.5 eq. (1.6)]{Laurent2022-ca}, $F_{\operatorname{man}} \in L^2 \left( (0,T);H^1(\mathcal{M}) \right) \subset  L^2 \left( (0,T);H^1(\mathcal{U}) \right)$. This concludes on the fact that $f_{\operatorname{man}} \in  L^2 \left( (0,T);H^1(\mathcal{U}) \right)$. \\

The $L^2$-control $u$ in system \eqref{control grushin system local} that steers $f_0$ to $0$ in time $T$ is therefore given by $h$, and the associated solution is $f_{\operatorname{tub}}$.\\

Now, under \ref{H1ii}, we follow the same proof as above until the use
of the cutoffs, at which point we simply take $\theta = \theta_1$. The resulting non-homogeneous term $h$ acting like the control is now located, for $\epsilon \in (0,L)$, within the elliptic region. Thus, by local regularity of the solutions in the elliptic region, we indeed have $h \in L^2((0,T) \times \omega^\epsilon)$. Finally, thanks to Lemma \ref{lemma: bndry tub neighb}, the Dirichlet boundary conditions are also satisfied. This fully concludes the proof.  
\end{proof}

\subsection{Proof of Theorem \ref{th nullcontr manifold}}\label{section proof of Theorem nullcontr manifold}

We can now proceed to conclude on the proof of Theorem \ref{th nullcontr manifold}. Both considerations in the theorem are treated the same way.

\begin{proof}[Proof of Theorem \ref{th nullcontr manifold}]

We illustrate the proof in Figure \ref{fig reduction system}. We assumed that system \eqref{control grushin system M} is null-controllable in time $T > 0$ from $\omega$ in the beginning of Section \ref{section proofs of theorem on manifolds}. \\

Let us recall $\mathcal{U}$ is fixed at the beginning of Section \ref{section proofs of theorem on manifolds}. Under both considerations outlined in \ref{H1}, $\mathcal{U}$ is diffeomorphic to, via lets say $\Phi$, $\Omega = (-L,L) \times \Omega_y$ with $L>0$ given by Lemma \ref{lemma: bndry tub neighb}. \\

Moreover, without loss of generality, we assumed that  
\begin{align*}
    \Omega_y = \left\{ \begin{array}{ll}
        (0,\pi)& \text{under \ref{H1i}},  \\
         (0,\pi), \ \mathbb{S}^1, \text{ or }\mathbb{R} & \text{under \ref{H1ii}}. 
    \end{array} \right.
\end{align*}

The diffeomorphism $\Phi : \mathcal{U} \rightarrow \Omega $ induces a unitary transformation 
\begin{align*}
    \begin{array}{lccc}
         S : & L^2( \mathcal{U}, \mu) & \rightarrow & L^2(\Omega, h(x)dxdy)  \\
         & f & \mapsto & Tf = f \circ \Phi,
    \end{array}
\end{align*}
and we have 
\begin{align}
    G := S \Delta_\mathcal{U} S^{-1} = \frac{1}{h(x)}\partial_x(h(x) \partial_x ) + q(x)^2\partial_y( r(y)^2 \partial_y ).
\end{align}

Thus, by Proposition \ref{proposition internal M to U}, and the above unitary transformation, we have internal null-controllability in time $T$ of initial states $f_0 \in D\left( G^{1/2} \right)$ of system 
\begin{align}\label{control system internal omega no change variable}
    \left\{ \begin{array}{lcll}
     \partial_t f - (\frac{1}{h(x)}\partial_x(h(x)\partial_x) + q(x)^2\partial_y(r(y)^2\partial_y)) f & = & u(t,x,y)\mathbf{1}_{\omega^\epsilon}(x,y), & (t,x,y) \in (0,T) \times \Omega,\\
     f(t,p) & = & 0, & (t,p) \in (0,T) \times \partial \Omega, \\
     f(0,p) & = & f_0(p), & p \in \Omega,
    \end{array} \right.
\end{align}
posed in $L^2(\Omega, h(x)dxdy)$, and with the $L^2$-control $u$ supported on $\omega^\epsilon$ defined by, for $\epsilon > 0$ arbitrary small, 
\begin{enumerate}[label=(\roman*)]
    \item $\omega^\epsilon = \left\{ (x,y) \in \Omega, \ x \in (-L,-L+\epsilon) \cup (L - \epsilon,L) \right\} \cup \left\{ (x,y) \in \Omega, y \in (0,\epsilon) \cup (\pi - \epsilon, \pi) \right\}$, under \ref{H1i},
    \item $\left\{ (x,y) \in \Omega, \ x \in (-L,-L+\epsilon) \cup (L - \epsilon,L) \right\}$, under \ref{H1ii}.
\end{enumerate}

Consider now the unitary transformation 
\begin{align}
\begin{array}{lccl}
T : & L^2(h(x)dxdy) & \longrightarrow & L^2(dxdy) \\
& f &  \longmapsto & \sqrt{h}f.
\end{array}
\end{align}
Then, setting $G_V := TGT^{-1} = \partial_x^2f + q(x)^2\partial_y(r(y)^2 \partial_y f) + V(x)f$, the observability inequality associated to the problem of internal null-controllability in time $T  > 0$ of initial states in $D\left( G^{1/2} \right)$ of system \eqref{control system internal omega no change variable}, is equivalent to the observability inequality \eqref{observability inequality Omega}, associated to the problem of internal null-controllability in time $T  > 0$ of initial states in $D\left( G_V^{1/2} \right)$ of system \eqref{control system change variable}, with $V(x) = \frac{\partial_x^2(\sqrt{h})}{\sqrt{h}}$. \\

Therefore, the non null-controllability results presented in Theorem \ref{th nullcontr manifold} are inherited from both Theorems \ref{th gamma = 1 euclidean koenig} and \ref{th gamma >= 1 euclidean bounded}.

\begin{enumerate}
    \setlength\itemsep{1em}
    \item Under \ref{H1i}, and $\gamma = 1$, from Theorem \ref{th gamma = 1 euclidean koenig} we have that for every $\epsilon > 0$ arbitrary small, for any time
    \begin{align*}
        T < \frac{1}{q'(0)}\min \{ d_{\operatorname{agm}}(-L+\epsilon), d_{\operatorname{agm}}(L-\epsilon)\},
    \end{align*}
    there exists $f_0 \in D\left( G_V^{1/2} \right)$ such that the associated solution of system \eqref{control system change variable} cannot be steered to $0$ in time $T$. As $\epsilon$ can be taken arbitrary small, we get for system \eqref{control grushin system M} that
    \begin{align*}
        T(\omega) \geq  \frac{1}{q'(0)}\min \{ d_{\operatorname{agm}}(-L), d_{\operatorname{agm}}(L) \}.
    \end{align*}
    
    \item Under \ref{H1ii}, by Theorem \ref{th gamma >= 1 euclidean bounded}, when $\gamma > 1$ we obviously never have null-controllability, since system \eqref{control system change variable} is never null-controllable, and for $\gamma = 1$, we obtain the same lower bound for $T(\omega)$ as above.
\end{enumerate}
\end{proof}

\begin{figure}[ht]
\begin{subfigure}[h]{0.45\linewidth}
\includegraphics[width=\linewidth]{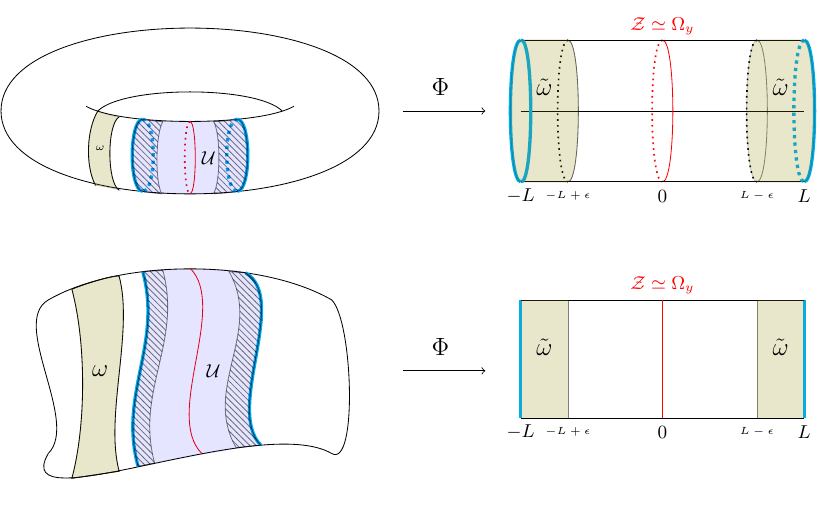}
\caption{The reduction process under \ref{H1ii}.}
\end{subfigure}
\quad
\begin{subfigure}[h]{0.45\linewidth}
\includegraphics[width=\linewidth]{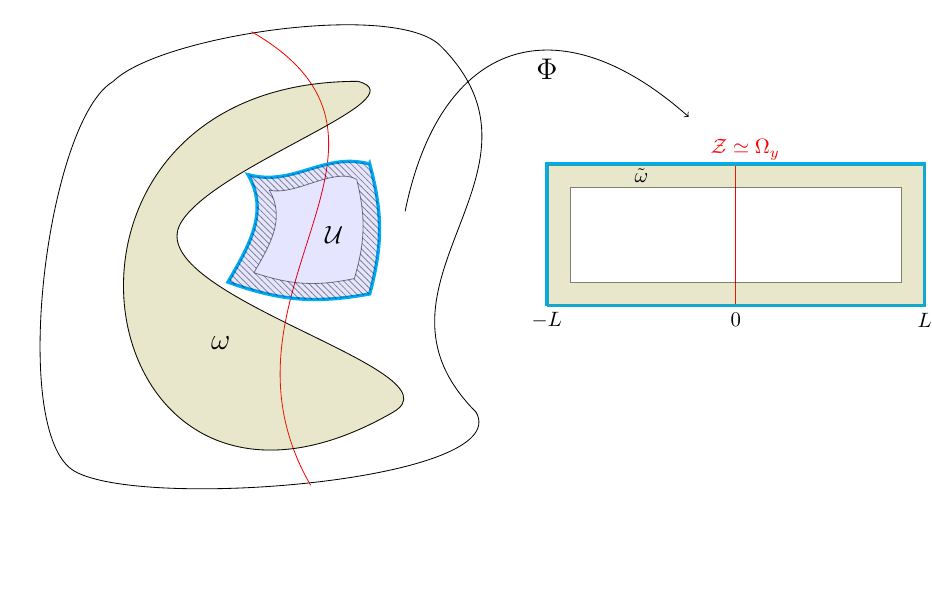}
\caption{The reduction process under \ref{H1i}.}
\end{subfigure}%
\caption{The reduction process under \ref{H1}. The singularity is in red, the control zones in olive, and the tubular neighborhood $\mathcal{U}$ in light blue. The blue paths correspond to $\partial \mathcal{U} \setminus \partial \mathbb{M}$. The dashed part of $\mathcal{U}$ correspond to where the control is supported after applying Proposition \ref{proposition internal M to U}. The arrows represent the passage into coordinates representation, with in olive on the tensorized domains the support of the control for the problem of internal controllability, which is the image by $\Phi$ of the dashed parts.}
\label{fig reduction system}
\end{figure}

\section{Applications, comments, and open problems}\label{section comments}

\textbf{The Grushin operator without warped product assumption.} Consider $\mathcal{M} = (-1,1) \times (0,\pi)$ with the Lebesgue measure. Set $X= \partial_x$, $Y=\Tilde{q}(x,y)\partial$. The sub-Laplacian writes $\Delta = \partial_x^2 + \partial_y(\Tilde{q}(x,y)^2\partial_y)$. Although there is no result for the heat equation associated to this operator, Theorem \ref{th nullcontr manifold} says that to obtain a negative result, it is sufficient that $\Tilde{q}$ locally writes as $\Tilde{q}(x,y) = q(x)r(y)$. Having a result for $\Delta$, even in the particular case of the rectangle, is an open problem. The usual Fourier techniques are not valid in this case.\\

\textbf{The Grushin sphere.} We retrieve with the strategy of Theorem \ref{th nullcontr manifold} the negative result of Tamekue \cite{tamekue2022null}. Namely, consider $\mathcal{M} = \mathbb{S}^2$, endowed with the sub-Riemannian structure generated by the vector fields $X = -z\partial_y + y\partial_z$,  $Y = -z \partial_x + x\partial_z$. Observe that they are linearly independent outside $\{z = 0\}$, but  $\{X,Y\}$ is bracket-generating since $[X,Y] = -y\partial_x + x\partial_y$. Moreover, \ref{H0} is satisfied. We also endow $\mathbb{S}^2$ with the restricted Lebesgue measure. Consider system \eqref{control grushin system M} with $\omega$ consisting of two symmetric, with respect to $\{z = 0\}$, horizontal crowns as in \cite[Fig. 1]{tamekue2022null}. For example $\omega = \{ (x,y,z) \in \mathbb{S}^2, |z| > a\}$ for some $a > 0$. Then, reducing the problem in $\mathbb{S}^2 \setminus \omega$, in spherical coordinates, the restricition of the Lebesgue measure writes $\cos(x)dxdy$, and we have that $\Delta = \frac{1}{\cos(x)}\partial_x(\cos(x)\partial_x) + \tan(x)^2\partial_y^2$ on $(-a,a) \times \mathbb{S}^1$. Then, by parity of the function $\tan$, our result shows that $T(\omega) \geq \frac{1}{q'(0)}\int_0^a \tan(s) \ ds = \ln\left(\frac{1}{\cos(a)}\right)$, which coincides with the negative result in \cite{tamekue2022null}. This strategy would also hold if the two components of the control zone are not at the same nonnegative distance of the singularity. \\

\textbf{The Grushin equation on the real line and lack of optimality.} Consider the Grushin equation on $\mathcal{M} = \mathbb{R} \times (0,\pi)$, with the Lebesgue measure, controlled on vertical strips. Theorem \ref{th nullcontr manifold} says that we can reduce the analysis of the negative results to the case $(-L,L) \times (0, \pi)$. We can emphasize that when $\lim_{|x| \rightarrow + \infty} |q(x)| = + \infty$, and $q$ satisfies \ref{H2}, the strategy proposed in the case where $\Omega_x$ is bounded extends to $\mathbb{R}$, and also if we perturb the equation by a potential $V \in L_{loc}^\infty(\mathbb{R})$ bounded below. The strategy of Theorem \ref{th gamma >= 1 euclidean bounded} extended to $\mathbb{R}$ and Theorem \ref{th nullcontr manifold} both give the same results. Nonetheless, it is important to notice that in both cases we obtain a lower bound for the minimal time of null-controllability, even when the equation is never null-controllable (for example when we do not control in one of the connected component of $\mathcal{M} \setminus \mathcal{Z}$).  \\

\textbf{Non-symmetry of the tubular neighborhood.} While Theorems \ref{th gamma = 1 euclidean koenig} and \ref{th gamma >= 1 euclidean bounded} assume that $\Omega_x$ is symmetric with respect to zero, they remain valid when substituting $(-L,L)$ with any other interval containing $0$ in its interior, up to some minor adjustments, in particular for Theorem \ref{th gamma >= 1 euclidean bounded}. For Theorem \ref{th gamma = 1 euclidean koenig}, the result comes from \cite{darde2023null} where the symmetry is never assumed. For Theorem \ref{th gamma >= 1 euclidean bounded}, the argument for exponential decay of the classical eigenfunctions in Proposition \ref{proposition supersolution grushin model} is no longer valid on the whole interval. That is, by lack of parity, Remark \ref{remark estimates supersolutions model} does not hold. One can either reapply the proof to negative $x$, or obtain an exponential decay argument as in Proposition \ref{proposition agmon decay grushin general}. In any case, the asymptotic on the eigenvalues still hold, and so does Theorem \ref{th gamma >= 1 euclidean bounded}. As a consequence, we can choose $\mathcal{U}$ to be diffeomorphic to $(-L_-,L_+) \times \Omega_y$. \\

\textbf{Positive results on $\mathcal{M}$}. Although we only focus on negative results, positive results are achievable in our settings when $\gamma = 1$. Indeed, if a positive result is achieved in $\mathcal{U}$ in time $T>0$ from $\omega$, it is possible to extend it to the whole manifold through cutoffs arguments. However, the resulting control zone $\omega' \subset \mathcal{M}$ for the system posed on $\mathcal{M}$ will have to be taken accordingly to the achievable positive results for the system posed on $\mathcal{M} \setminus \mathcal{U}$. For instance, under \ref{H1i}, to the best of our knowledge, we can only state that the heat system posed on $\mathcal{M} \setminus \mathcal{U}$ is null-controllable in any time $T > 0$ from $\mathcal{M} \setminus \mathcal{U}$ by dissipativity. Under \ref{H1ii} however, we can take advantage of the uniform parabolicity of the heat outside $\mathcal{U}$. \\

Nevertheless, to obtain null-controllability within $\mathcal{U}$, we must also require that $\inf_{\mathcal{U}}q'(x) > 0$, which is a stronger assumption than \ref{H2} (see \cite{beauchard2020minimal}). Proving positive results for the Grushin equation on rectangular domains without the monotony assumption on $q$ is still an open problem when the control zone does not act on both side of the singularity. \\

\textbf{Complement of a horizontal strip.} The analogue on manifolds of controlling in the complement of a horizontal strip on a rectangular domain, is to say that there exists a normal geodesic to the singularity that never intersects the control zone. Although we expect to never have null-controllability of any $\gamma \geq 1$, Theorem \ref{th nullcontr manifold} does not cover this case correctly. It only gives a lower bound for $T(\omega)$. It is also important to say that a negative result when $\gamma > 1$ in this setting on the rectangle has never been obtained, and is an open problem. If such a result is obtained, Theorem \ref{th nullcontr manifold glob} would hold under \ref{H1i}. 

\section*{Acknowledgment}
    The author would like to express his gratitude to his PhD advisors, Pierre Lissy and Dario Prandi, for their guidance, insightful discussions, and feedbacks throughout this work. The author also wishes to extend his thanks to Armand Koenig for the discussions regarding his series of papers on the non-observability of the Grushin equation, as well as for the exchanges concerning their generalization, and to anonymous reviewers whose comments greatly contributed to improving a previous version of this paper.

\printbibliography

\end{document}